\newcommand{\Q}{\mathbb{Q}}
\newcommand{\Z}{\mathbb{Z}}
\newcommand{\F}{\mathbb{F}}
\newcommand{\Qp}{\Q_{p}}
\newcommand{\Zp}{\Z_{p}}
\newcommand{\R}{\mathbb{R}}
\newcommand{\C}{\mathbb{C}}
\newcommand{\A}{\mathbb{A}}
\newcommand{\norm}[1]{\left\lVert#1\right\rVert}
\DeclareFontFamily{U}{rcjhbltx}{}
\DeclareFontShape{U}{rcjhbltx}{m}{n}{<->rcjhbltx}{}
\DeclareSymbolFont{hebrewletters}{U}{rcjhbltx}{m}{n}
\DeclareMathSymbol{\kaf}{\mathord}{hebrewletters}{107}
\newtheorem{theorem}{Theorem}[section]
\newtheorem{lemma}[theorem]{Lemma}
\newtheorem{proposition}[theorem]{Proposition}
\newtheorem{corollary}[theorem]{Corollary}
\theoremstyle{definition}
\newtheorem{definition}[theorem]{Definition}
\newtheorem{remark}[theorem]{Remark}
\title{Spherical varieties and non-ordinary families of cohomology classes}
\author{by Rob Rockwood}
\date{}
\begin{document}

\maketitle

\begin{abstract}
We show that $p$-adic families of cohomology classes associated to symmetric spaces vary $p$-adically over small discs in weight space, without any ordinarity assumption. This generalises previous work of Loeffler, Zerbes and the author. Furthermore, we show that these families exhibit full variation in the cyclotomic direction, generalising previous constructions of Euler systems and $p$-adic $L$-functions. As an application we show that the Lemma--Flach Euler system of Loeffler--Skinner--Zerbes interpolates in Coleman families.
\end{abstract}

\tableofcontents

\section{Introduction}

This article is a sequel to the articles \cite{loefflerspherical} and \cite{loeffler2021spherical}. In the first of these articles Loeffler constructs a very general machine for constructing norm-compatible families of cohomology classes associated to pairs of reductive groups satisfying an `open orbit' condition. Such pairs are said to be \textit{spherical}. Loeffler's machine subsumes the constructions of many well-known norm-compatible classes, including the Euler systems of Heegner Points and Beilinson--Flach classes. In the second listed article, Loeffler, Zerbes and the author show that the ordinary projections of the classes constructed in \cite{loefflerspherical} interpolate $p$-adically  as the weight varies. 

In this paper we show that the classes constructed in \cite{loefflerspherical} continue to exhibit $p$-adic variation if we relax the ordinarity hypothesis and assume only that our families have finite slope and satisfy a suitably defined `non-criticality' condition. Our methods differ substantially from the ordinary case studied in \cite{loeffler2021spherical}; since ordinary classes are fundamentally integral the methods used to study them are algebraic in nature. Since we allow our classes to exhibit unbounded $p$-adic growth our methods take on a more $p$-adic analytic flavour. 

A somewhat novel aspect of our work is that we work with cohomology with coefficients in (sheaves associated to) locally analytic function modules as opposed to the usual modules of distributions, an idea first appearing in \cite{greenberg2020triple}. Using modules of function reduces the problem of constructing branching maps to the construction of analytic functions invariant under a particular group, which turns out to be not so difficult. Our main innovation in using these modules pertains to the observation that when working with \'etale cohomology the usual modules of locally analytic functions are not profinite and therefore their \'etale cohomology groups have poor finiteness properties. We instead construct spaces of `Iwasawa analytic' functions: whereas locally analytic functions are represented by power series on `closed' discs of some fixed radius, our `Iwasawa' functions are represented on `open' discs. We show that these modules are profinite and that the union of these spaces over all radii coincides with the union of the locally analytic spaces.  

We give a brief outline of the contents of the paper:

\begin{itemize}
    \item In \S 2 we collect together the objects and notations we will use for the rest of the paper, in particular a spherical pair of reductive groups $(G, H)$, their associated locally symmetric spaces $Y_{G}, Y_{H}$ and their cohomology. We introduce the modules of analytic and Iwasawa functions and their associated sheaves which will serves as our coefficients. 
    \item In \S 3 given a spherical pair $(G, H)$ we construct pushforward maps from the cohomology of $Y_{H}$ to that of $Y_{G}$, interpolating the classical pushforwards defined in \cite[Definition 4.3.4]{loeffler2021spherical}.
    \item In \S 4 we collect together facts about complexes of Banach modules and slope decompositions that we will need in the ensuing chapters.
    \item In \S 5 we construct `Abel--Jacobi maps' from \'etale to Galois cohomology which interpolate the `classical weight' Abel-- Jacobi maps obtained as edge maps in the Hochschild--Serre spectral sequence and used in, for example, \cite{KLZ}, \cite{LZGsp}, to construct norm-compatible classes in Galois cohomology. By composing these maps with the pushforwards constructed in \S 3 we obtain pushforward maps into Galois cohomology. 
    \item In \S 6 we show how one can extend the range of interpolation of the branching maps constructed in \S 3 by modifying the spherical datum. 
    \item In \S 7 we show that the branching maps constructed in \S 3 satisfy certain congruences. Using these congruences, we construct a map into distribution valued Galois cohomology, interpolating the Galois pushforwards we constructed in \S5 over all cyclotomic twists. 
    \item In \S8 we use the pushforwards constructed in \S 7 to interpolate the classes of the Lemma--Flach Euler system, constructed in \cite{LZGsp}, in Coleman families. 
    \begin{theorem}
    Set $p > 4$ and let $\mathcal{F}$ be a Coleman family of genus $2$ Siegel modular forms over a sufficiently small affinoid $U$ of weight space admitting a specialisation with generic $L$-parameter in the sense of \cite[Definition 1.1]{yang2025generic}. Then there is an $\mathcal{O}(U)$-linear Galois representation $W_{\mathcal{F}}$ and a cohomology class
    $$
        z_{\mathcal{F}} \in H^{1}(\Q, D^{\ell a}(\Gamma, W_{\mathcal{F}}))
    $$
    whose specialisations at all specialisations of $\mathcal{F}$ and all characters of $\Gamma = \mathrm{Gal}(\Q(\zeta_{p^{\infty}})/\Q)$ coincide with the Lemma--Eisenstein classes of \cite{LZGsp} of these specialisations and characters, wherever these are defined. 
    \end{theorem}
    See \S \ref{sec:lemfam} for a more precise statement. 
    \begin{remark}
        The classes constructed in \cite{LZGsp} are only constructed for ordinary $p$-stabilisations of general type automorphic representations of $\mathrm{GSp}_{4}$ however the construction of the classes generalises immediately to the finite-slope case using the Abel--Jacobi maps constructed in \S \ref{sec:abeljac}, assuming only a weaker non-criticality condition in place of ordinarity. It is these classes which we interpolate.
    \end{remark}
 \end{itemize}

 \section*{Acknowledgements}
I would like to thank David Loeffler for suggesting this problem to me and for many helpful conversations and correspondences. I would also like to thank Chris Williams, Andrew Graham and Marco Seveso for helpful conversations. I am funded by the Heilbronn Institute for Mathematical Research.

\section{Setup}
\label{sec:setup}
We fix the notation used throughout the paper. 
Fix a prime $p$. We recall the following setting from \cite{loeffler2021spherical}:
\begin{itemize}
    \item Let $\mathcal{G}$ be a connected reductive group over $\mathbb{Q}$ satisfying Milne's axiom (SV5), that is, the torus $\mathcal{Z} = Z(\mathcal{G})^{\circ}$ (the identity component of the centre of $\mathcal{G}$) contains no $\mathbb{R}$-split torus which is not $\mathbb{Q}$-split \footnote{As noted in \cite{loeffler2021spherical} this condition is for convenience and can be relaxed with some fastidious bookkeeping.} . 
    \item $G$ is a reductive group-scheme over $\mathbb{Z}_{p}$ whose base-extension to $\mathbb{Q}_{p}$ coincides with that of $\mathcal{G}$. In particular $G$ is quasi-split and thus admits a Borel subgroup $B_{G} \subset G$. 
    \item $Q_{G}$ is a choice of standard parabolic subgroup of $G$ and $\overline{Q}_{G}$ is the opposite parabolic, so that $L_{G} = Q_{G} \cap \overline{Q}_{G}$ is a Levi subgroup of $Q_{G}$ and the big Bruhat cell $U_{\mathrm{Bru}}^{G} = \overline{N}_{G} \times L_{G} \times N_{G}$ is an open subscheme of $G$ over $\mathbb{Z}_{p}$, where $N_{G}$ is the unipotent radical of $Q_{G}$ and $\overline{N}_{G}$ is its opposite.
    
    \item Let $S_{G}$ denote the torus $L_{G}/L_{G}^{\mathrm{der}}$ with character lattice $X^{\bullet}(S_{G})$ and let $X^{\bullet}_{+}(S_{G})$ denote the $Q_{G}$-dominant weights. Let $C_{G} = G/G^{\mathrm{der}}$ denote the maximal torus quotient of $G$. 
    \item We choose a subtorus $S_{G}^{0} \subset S_{G}$ and let $L_{G}^{0}$ and $Q_{G}^{0}$ be its preimages under the quotient maps $L_{G} \to S_{G}$ and $Q_{G} \to S_{G}$ respectively. 
    \item Let $J_{G} \subset G(\mathbb{Z}_{p})$ be the parahoric subgroup associated to $Q_{G}$. This group admits an Iwahori decomposition
    $$
        J_{G} = (J_{G} \cap \overline{N}_{G}(\mathbb{Z}_{p})) \times L_{G}(\mathbb{Z}_{p}) \times N_{G}(\mathbb{Z}_{p})
    $$
    \item Let $\Phi_{G}$ denote the roots of $G$, $\Delta_{G}$ the simple roots and $\Phi^{+}_{G}$ the positive roots relative to the Borel subgroup $B_{G}$ and let $\Phi_{L_{G}}, \Delta_{L_{G}}, \Phi^{+}_{L_{G}}$ be the corresponding root sets for $L_{G}$ relative to the Borel $B_{L_{G}} = B_{G} \cap L_{G}$. 
\end{itemize}
Let $\mathcal{H}$ be another reductive $\Q$-group with reductive $\Zp$-model $H$. By systematically replacing $G$ with $H$ in the notation above we define analogous groups $B_{H}, Q_{H}, \overline{Q}_{H}, L_{H}, \ldots$ for $H$. From Section \ref{sec:branchlaw} onwards we consider a closed immersion $\mathcal{H} \to \mathcal{G}$ and choose $S_{H}^{0}$ so that $Q_{H}^{0}$ has an open orbit on the flag variety $G/\overline{Q}_{G}$, satisfying further compatibilities. 
\begin{itemize}
    \item Let $A$ be the maximal $\mathbb{Q}_{p}$-split torus in the centre of $L_{G}$ and define submonoids:
    \begin{align*}
         A^{-} &= \{a \in A(\Qp): v_{p}(\alpha(a)) \geq 0 \ \forall \ \alpha \in \Phi^{+}_{G} \backslash \Phi^{+}_{L_{G}}\} \\
         A^{--} &=  \{a \in A(\Qp): v_{p}(\alpha(a)) > 0 \ \forall \ \alpha \in \Phi^{+}_{G} \backslash \Phi^{+}_{L_{G}}\}.
    \end{align*}
    Note that the positive roots occurring in $\Phi^{+}_{G} \backslash \Phi^{+}_{L_{G}}$ are precisely those corresponding to the root spaces occurring in the unipotent radical $N_{G}$. It follows that for any $a \in A^{-}$ we have $aN_{G}(\Zp)a^{-1} \subset N_{G}(\Zp)$ and for any $a \in A^{--}$ we have $aN_{G}(\Zp)a^{-1} \equiv \ 1 \ \mathrm{mod} \ p$. 
    \item Fix $\tau \in A^{--}$ and for $n \geq 0$ define $N_{n} = \tau^{n} N_{G}(\Zp)\tau^{-n}, \overline{N}_{n} = \tau^{-n} \overline{N}_{G}(\Zp) \tau^{n}$ and $L_{n} = \{\ell \in L_{G}(\Zp): \ell \in L^{0}_{G}(\Z/p^{n}\Z) \ \mathrm{mod} \ p^{n}\}$. For $n\geq 1$ define open-compact subgroups of $G(\Zp)$:
    $$
        U_{n} = \overline{N}_{0} \times L_{n} \times N_{n}, \ \ \
        V_{n} = \tau^{-n}U_{n}\tau^{n}
    $$
\end{itemize}

Set $d_{G} = \mathrm{dim}N_{G}$.  
\subsection{Algebraic representations}
Let $K/\mathbb{Q}_{p}$ be a finite unramified extension over which $G$ splits and let $\mathcal{O}$ be the ring of integers of $K$. For $\lambda \in X_{+}^{\bullet}(S_{G})$ we define 
$$
    V^{G}_{\lambda} = \{f \in K[G]: f(\overline{n}\ell g) = \lambda(\ell)f(g) \ \forall \overline{n} \in \overline{N}_{G}, \ell \in L_{G}, g \in G\}
$$
with $g$ acting by right-translation. By the Borel--Weil--Bott theorem this is the irreducible $G$-representation of highest weight $\lambda$ with respect to a Borel subgroup $B_{G} \subset Q_{G}$.  We let $f_{\lambda}$ denote the unique choice of highest weight vector satisfying $f_{\lambda}(\overline{n}\ell n) = \lambda(\ell)$ for $\overline{n}\ell n \in U_{\mathrm{Bru}}^{G}$. We also write 
$$
    \mathcal{P}_{\lambda}^{G} = \{f \in K[G]: f(n\ell g) = \lambda^{-1}(\ell)f(g)  \ \forall n \in N_{G}, \ell \in L_{G}, g \in G\},
$$
so that $(\mathcal{P}^{G}_{\lambda})^{\vee} \cong V^{G}_{\lambda}$ as $G$-representations. A canonical choice of highest weight vector of $(\mathcal{P}^{G}_{\lambda})^{\vee}$ with respect to $B_{G}$ is given by the functional $\delta_{1}: f \mapsto f(1)$. 
\subsection{Integral lattices}
\begin{definition}
An \textit{admissible lattice} in $V^{G}_{\lambda}$ is an $\mathcal{O}$-lattice $\mathcal{L} \subset V^{G}_{\lambda}$ invariant under $G_{/\mathcal{O}}$ and whose intersection with the highest weight subspace is $\mathcal{O} \cdot f_{\lambda}$. 
\end{definition}
Admissible lattices have the following properties:
\begin{itemize}
    \item There are only finitely many admissible lattices in a given $V^{G}_{\lambda}$ with a unique maximal and minimal lattice. 
    \item Every admissible lattice in $V^{G}_{\lambda}$ is a direct sum of its intersections with the weight spaces of $V^{G}_{\lambda}$.
    \item Taking the dual of an admissible lattice in $V^{G}_{\lambda}$ gives an admissible lattice in $\left(V^{G}_{\lambda}\right)^{\vee}$. The dual of the maximal lattice in $V^{G}_{\lambda}$ is the minimal lattice in $V^{G, \vee}_{\lambda}$ and vice-versa. 
\end{itemize}

Let $V_{\lambda, \mathcal{O}}$ denote the maximal admissible lattice in $V^{G}_{\lambda}$, given in the Borel--Weil--Bott presentation as 
$$
    V_{\lambda, \mathcal{O}}= \{f \in \mathcal{O}[G]: f(\overline{n}\ell g) = \lambda(\ell)f(g) \ \forall \ \overline{n} \in \overline{N}_{G}, \ell \in L_{G}, g \in G\}.
$$
Similarly defining $\mathcal{P}^{G}_{\lambda, \mathcal{O}}$, then $(\mathcal{P}^{G}_{\lambda, \mathcal{O}})^{\vee}$ is isomorphic to the minimal admissible lattice in $V^{G}_{\lambda}$. 

\subsection{Cohomology of locally symmetric spaces} \label{sec:coho}

As in \cite{loeffler2021spherical} we fix a neat prime-to-$p$ level group $K^{p}$ and for an open compact subgroup $U \subset G(\mathbb{Z}_{p})$ let $Y_{G}(U)$ denote the locally symmetric space of level $K^{p}U$. We consider the following cohomology theories:
\begin{itemize}
    \item Betti cohomology of the locally symmetric space $Y_{G}(U)$ viewed as a real manifold, with coefficients in a locally constant sheaf $\mathscr{F}$, denoted for $i \geq 0$ by
    $$
    H^{i}(Y_{G}(U), \mathscr{F}).
    $$
\end{itemize}
Suppose now that $\mathcal{G}$ admits a Shimura datum with reflex field $E$. 
\begin{itemize}
    \item \'Etale cohomology of $\overline{Y}_{G}(U) := Y_{G}(U)_{\overline{\mathbb{Q}}}$ with coefficients in a lisse \'etale sheaf $\mathscr{F}$, denoted for $i \geq 0$ by
    $$
    H^{i}(\overline{Y}_{G}(U), \mathscr{F}).
    $$
    \item Assume further that $\mathcal{G}$ admits an abelian type Shimura datum and that $\mathcal{Z}$ splits over a CM field. Then \cite[Theorem 2.2.1]{lovering2017integral} says that if $\Sigma$ is a  finite set of primes containing those dividing $p$ and at which $K^{p}$ is hyperspecial, the Shimura variety $Y_{G}$ admits a canonical integral model $Y_{G}(U)_{\Sigma}$ over $\mathcal{O}_{E}[\Sigma^{-1}]$. We consider the \'etale cohomology of $Y_{G}(U)_{\Sigma}$ with coefficients in $\mathscr{F}$, denoted for $i \geq 0$ by
    $$
    H^{i}(Y_{G}(U)_{\Sigma}, \mathscr{F}).
    $$
\end{itemize}

For any open compact subgroups $V \subset U \subset G(\Zp)$ let 
$$
\mathrm{pr}^{U}_{V}: Y_{G}(V) \to Y_{G}(U)
$$
denote the natural degeneracy map. 
If $A$ is a representation of an open compact subgroup $U \subset G(\Zp)$ then we can consider $A$ as a locally constant sheaf $\mathscr{A}_{V}$ on $Y_{G}(V)$ for any $V \subset U$ open-compact, and if $A$ is profinite with $U$ acting continuously and $G$ admits a Shimura datum we can consider $A$ as a lisse \'etale sheaf (which we also denote by $\mathscr{A}_{V}$) on the variety $Y_{G}(V)$. These sheaves satisfy the compatibility
$$
    \left(\mathrm{pr}^{V}_{V'}\right)^{*}\mathscr{A}_{V} = \mathscr{A}_{V'}
$$
for any open-compact $V' \subset V$ and thus we have natural pushforward maps 
$$
    \left(\mathrm{pr}^{V}_{V'}\right)_{*}: H^{i}(Y_{G}(V'), \mathscr{A}_{V'}) \to H^{i}(Y_{G}(V), \mathscr{A}_{V}).
$$
\begin{definition}
Let $A$ be a representation of an open-compact subgroup $U \subset G(\Zp)$ with associated family of locally constant sheaves $\{\mathscr{A}_{V}: \text{$V \subset U$ open compact}\}$. Let $X \subset G(\Zp)$ be \textit{any} subgroup (not necessarily open-compact). We then define \textit{Iwasawa cohomology} of $X$ to be
$$
   H^{i}_{\mathrm{Iw}}(X, \mathscr{A}) :=  \varprojlim_{V \supset X}H^{i}(Y_{G}(V), \mathscr{A}_{V}),
$$
where the limit is taken over open-compact subgroups $V\subset U$ containing $X$.
\end{definition}

From now on we will drop the subscript $V$ for sheaves $\mathscr{A}_{V}$ on $Y_{G}(V)$ associated to representations $A$ of open-compact subgroups $V \subset G(\Zp)$.
\subsection{Branching laws for algebraic representations} \label{sec:branchlaw}
We now work in the situation of \cite{loefflerspherical} and \cite{loeffler2021spherical} and consider an embedding $\iota: \mathcal{H} \to \mathcal{G}$ of reductive $\mathbb{Q}$-groups, extending to an embedding 
$$
\iota: H \to G
$$
of reductive group schemes over $\mathbb{Z}_{p}$. We choose data as in Section \ref{sec:setup} for both $H$ and $G$. We in particular note that we require no compatibility between the choices of parabolic subgroups $Q_{H}, Q_{G}$ other than those stated below.

Denote by $\mathcal{F} := \overline{Q}_{G} \backslash G$ the flag variety associated to the parabolic $\overline{Q}_{G}$. We assume that there is $u \in \mathcal{F}(\mathbb{Z}_{p})$ satisfying:
\begin{itemize}
    \item[(A)] The $Q_{H}^{0}$-orbit of $u$ is Zariski open in $\mathcal{F}$,
    \item[(B)] The image of $\overline{Q}_{G} \cap uQ_{H}^{0}u^{-1}$ under the projection $\overline{Q}_{G} \to  S_{G}$ is contained in $S^{0}_{G}$. 
\end{itemize}

Under the above assumptions, the space $U_{\mathrm{Sph}} = \overline{Q}_{G}uQ_{H}^{0}$ is a Zariski open subset of $G$. We refer to it (and its image in the flag variety) as the \textit{spherical cell}. 
\begin{remark}
Note that since the flag variety is connected, $U_{\mathrm{Bru}}^{G} \cap U_{\mathrm{sph}} \neq \emptyset$ so we can always take $u \in U_{\mathrm{Bru}}^{G}(\mathbb{Z}_{p})$ and in particular we can take $u \in N_{G}(\mathbb{Z}_{p})$, which we assume from now on.
\end{remark}

By \cite[Proposition 3.2.1]{loeffler2021spherical}, for $\lambda \in X^{\bullet}_{+}(S_{G})$ the space $\left(V_{\lambda}^{G}\right)^{Q_{H}^{0}}$ has dimension $\leq 1$. 
\begin{definition} \label{def:admiss}
We call weights satisfying $\mathrm{dim}\left(V_{\lambda}^{G}\right)^{Q_{H}^{0}} = 1$ $Q_{H}^{0}$-\textit{admissible weights} and denote the cone of such weights by $X_{+}^{\bullet}(S_{G})^{Q_{H}^{0}}$.
\end{definition}
For $\lambda \in X_{+}^{\bullet}(S_{G})^{Q_{H}^{0}}$ the space $\left(V_{\lambda}^{G}\right)^{Q_{H}^{0}}$ is spanned by the polynomial function $f^{\mathrm{sph}}_{\lambda} \in K[G]$ which we normalise by setting $f^{\mathrm{sph}}_{\lambda}(u) = 1$. The $H$-orbit of $f^{\mathrm{sph}}_{\lambda}$ generates an irreducible representation of $H$ of some highest weight $\mu \in X_{+}^{\bullet}(S_{H})$ with respect to a Borel $B_{H} \subset Q_{H}$.

\begin{proposition}
Let $\lambda \in X^{\bullet}_{+}(S_{G})^{Q_{H}^{0}}$  and let $\mu \in X_{+}^{\bullet}(S_{H})$ be the associated weight of $H$. Then there is a unique $H$-equivariant map 
$$
    \mathrm{br}^{\lambda}_{\mu}: (\mathcal{P}^{H}_{\mu, \mathcal{O}})^{\vee} \to V_{\lambda, \mathcal{O}}
$$
sending $\delta_{1}$ to $f^{\mathrm{sph}}_{\lambda}$. 
\end{proposition}
\begin{proof}
    By the preceding paragraph we obtain an $H$-equivariant map 
    $$
        \mathrm{br}_{\mu}^{\lambda}: \mathcal{P}^{H, \vee}_{\mu} \to V^{G}_{\lambda}\vert_{H}
    $$
    uniquely defined by sending $\delta_{1} \mapsto f_{\lambda}^{\mathrm{sph}}$. Since $V_{\lambda, \mathcal{O}}$ is the maximal admissible lattice in $V^{G}_{\lambda}$ then the restriction of $\mathrm{br}_{\mu}^{\lambda}$ to $(\mathcal{P}^{H}_{\mu, \mathcal{O}})^{\vee}$ must land in $V_{\lambda, \mathcal{O}}$ and since $\delta_{1} \in (\mathcal{P}^{H}_{\mu, \mathcal{O}})^{\vee}$ and $f_{\lambda}^{\mathrm{sph}} \in V_{\lambda, \mathcal{O}}$ we conclude by \cite[Proposition 3.2.6]{loeffler2021spherical}. 
\end{proof}     
Since the lattices $(\mathcal{P}^{H}_{\mu, \mathcal{O}})^{\vee} $ and $V_{\lambda, \mathcal{O}}$ are finite-free $\mathcal{O}$-modules they are profinite and so for any open compact subgroups $K_{H} \subset \mathcal{H}(\A_{f}), K_{G} \subset \mathcal{G}(\A_{f})$ acting on  $(\mathcal{P}^{H}_{\mu, \mathcal{O}})^{\vee} $ and $V_{\lambda, \mathcal{O}}$ via projection to $H(\Qp)$ and $G(\Qp)$ respectively we can associate respective lisse \'etale sheaves $\mathscr{P}^{H, \vee}_{\mu, \mathcal{O}}$ and $\mathscr{V}_{\lambda, \mathcal{O}}$ over $Y_{H}(K_{H}), Y_{G}(K_{G})$. If we further assume that $K_{H} \subset \iota^{-1}\left(K_{G}\right)$ then we have a finite map 
$$
    \iota: Y_{H}(K_{H}) \to Y_{G}(K_{G}).
$$
Letting $c := \mathrm{dim}_{\R}Y_{G} - \mathrm{dim}_{\R}Y_{H} - \mathrm{rk}_{\R}\left(\frac{Z_{H}}{Z_{G} \cap H}\right)$, where $\mathrm{rk}_{\R}$ denotes the split rank over $\R$, we have pushforward maps (c.f. \cite[2.1.2]{KLZ})
$$
    \iota_{*}: H^{i}_{\mathrm{Iw}}(Y_{H}(Q_{H}^{0} \cap u^{-1}U_{n}u), \mathcal{F}) \to H^{i + c}_{\mathrm{Iw}}(Y_{G}(Q_{H}^{0} \cap u^{-1}U_{n}u), \mathcal{F}(c)). 
$$
for any lisse \'etale sheaf $\mathcal{F}$. For any $i \geq 0$ we define a map
\begin{align*}
    \iota_{\mu}^{\lambda}(K_{H}, K_{G}): H^{i}(Y_{H}(K_{H}), \mathscr{P}^{H, \vee}_{\mu, \mathcal{O}}) &\xrightarrow{\mathrm{br}_{\mu, *}^{\lambda}} H^{i + c}(Y_{H}(K_{H}), \iota^{*}\mathscr{V}_{\lambda, \mathcal{O}}) \\
    &\xrightarrow{\iota_{*}} H^{i + c}(Y_{G}(K_{G}), \mathscr{V}_{\lambda, \mathcal{O}}(c)),
\end{align*}
which we refer to as an `algebraic branching map'. We will mostly be interested in the case where $K_{H} = K_{H, p}K_{H}^{p}, K_{G} = K_{G, p}K_{G}^{p}$ where $K_{H, p} = Q_{H}^{0} \cap u^{-1}U_{n}u \subset H(\Zp), K_{G, p} = u^{-1}U_{n}u \subset G(\Zp)$ and $K_{H}^{p} \subset \mathcal{H}(\A_{f}^{(p)}), K_{G}^{p} \subset \mathcal{G}(\A_{f}^{(p)})$ are open compact subgroups satisfying $K_{H}^{p} \subset \iota^{-1}\left(K_{G}^{p}\right)$. For this family of subgroups we will abuse notation and just refer to the branching maps as $\iota^{\lambda}_{\mu}$, dropping the level groups from the notation. 
\subsection{Weight spaces}
Write $\mathfrak{S}_{G} = L_{G}(\mathbb{Z}_{p})/L_{G}^{0}(\mathbb{Z}_{p}) =  S_{G}(\mathbb{Z}_{p})/S_{G}^{0}(\mathbb{Z}_{p}) \subset (S_{G}/S_{G}^{0})(\mathbb{Z}_{p})$. The torus $\mathfrak{S}_{G}$ splits into a direct product $\mathfrak{S}_{G} = \mathfrak{S}_{G}^{\mathrm{tor}} \times \mathfrak{S}_{G,1}$ with the logarithm map identifying $\mathfrak{S}_{G, 1} \cong \mathbb{Z}_{p}^{n_{G}}$ for some integer $n_{G}$ and $\mathfrak{S}_{G}^{\mathrm{tor}}$ of finite prime-to-$p$ order.
\begin{definition}
Write $\mathcal{W}_{G}$ for the rigid analytic space over $\mathbb{Q}_{p}$ parameterising continuous characters of $\mathfrak{S}_{G}$. 
\end{definition}

\begin{definition}
For $i = 1, \ldots, n_{G}$ let $s_{i} \in \mathfrak{S}_{G,1}$ be a $\mathbb{Z}_{p}$-basis. For an integer $m \geq 0$ we denote by $\mathcal{W}_{m} \subset \mathcal{W}_{G}$ the wide-open subspace consisting of weights $\lambda$ satisfying 
$$
    v_{p}(\lambda(s_{i}) - 1) > \frac{1}{p^{m}(p - 1)}
$$
for all $i$. 
\end{definition}

Fix a finite extension $L/\mathbb{Q}_{p}$ and for some $m \geq 0$ let $\mathcal{U} \subset \mathcal{W}_{m}$ be a wide-open disc defined over $L$. Let $\Lambda_{\mathcal{U}} \cong \mathcal{O}_{L}[[t_{1}, \ldots, t_{n_{G}}]]$ denote the $\mathcal{O}_{L}$-algebra of bounded-by-1 rigid functions on $\mathcal{U}$ with $\mathfrak{m}_{\mathcal{U}}$ its maximal ideal. Define the universal character for the torus $\mathfrak{S}_{G}$
\begin{align*}
    k_{\mathrm{univ}}^{G}: \mathfrak{S}_{G} &\to \Lambda(\mathfrak{S}_{G})^{\times} \\
    s &\mapsto [s]
\end{align*}
where $\Lambda(\mathfrak{S}_{G})$ is the Iwasawa algebra of $\mathfrak{S}_{G}$ and is canonically isomorphic to the bounded-by-1 global sections of $\mathcal{W}_{G}$.

Let $\mathcal{U} \subset \mathcal{W}_{G}$ be a wide open disc. Define 
$$
    k_{\mathcal{U}}^{G}: \mathfrak{S}_{G} \to \Lambda_{\mathcal{U}}^{\times}
$$
to be the character given by composing $k_{\mathrm{univ}}^{G}$ with restriction to $\mathcal{U}$. 
\begin{lemma} \label{lem:analchar}
If $\mathcal{U} \subset \mathcal{W}_{m}$ then the character $k_{\mathcal{U}}^{G}$ is $m$-analytic on $\mathfrak{S}_{G}$, viewed as a disjoint union of copies of $\mathbb{Z}_{p}^{n_{G}}$ indexed by $\mathfrak{S}_{G}^{\mathrm{tor}}$. 
\end{lemma}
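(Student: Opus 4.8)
The plan is to reduce the assertion to a bound on the values of $k^G_{\mathcal{U}}$ and $k^H_{\mathcal{U}}$ at topological generators of $\mathfrak{S}_{G,1}$ and $\mathfrak{S}_{H,1}$, to deduce that bound from the hypothesis $\mathcal{U}\subseteq\mathcal{W}_m$, and then to transport it from $G$ to $H$ along $\omega$. For the reduction: a continuous character $\kappa\colon\mathfrak{S}_G\to\Lambda_G(\mathcal{U})^{\times}$ is $m$-analytic exactly when, on each of the finitely many cosets of $\mathfrak{S}_{G,1}\cong\mathbb{Z}_p^{n_G}$ indexed by $\mathfrak{S}_G^{\mathrm{tor}}$, it is given by a rigid-analytic function; since $\kappa$ is multiplicative and a torsion translation merely rescales it by a unit of $\Lambda_G(\mathcal{U})$, this reduces to analyticity of $\kappa|_{\mathfrak{S}_{G,1}}$, which in turn holds once each $u_i:=\kappa(s_i)$ satisfies $v_p(u_i-1)>\tfrac{1}{p^{m}(p-1)}$ for the spectral (Gauss) valuation of $\Lambda_G(\mathcal{U})$. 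Indeed that inequality forces $v_p\big(p^m\log u_i\big)>\tfrac{1}{p-1}$, so on a $p^m$-coset $\mathbf a+p^m\mathbb{Z}_p^{n_G}$ one has $\kappa(\mathbf a+p^m\mathbf y)=\kappa(\mathbf a)\prod_i\exp\!\big(p^m y_i\log u_i\big)$, a power series in $\mathbf y$ converging on the unit polydisc. The same reduction applies to $k^H_{\mathcal{U}}$ with the $s_i$ replaced by a $\mathbb{Z}_p$-basis $s^H_1,\dots,s^H_{n_H}$ of $\mathfrak{S}_{H,1}$.

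Next I would verify the bound for $k^G_{\mathcal{U}}$ and propagate it. By definition $u_i=k^G_{\mathcal{U}}(s_i)$ is the restriction to $\mathcal{U}$ of the grouplike element $[s_i]\in\Lambda(\mathfrak{S}_G)^{\times}$, i.e. the bounded rigid function $\lambda\mapsto\lambda(s_i)$; every point of $\mathcal{U}$ lies in $\mathcal{W}_m$, so each of its values satisfies $v_p(\lambda(s_i)-1)>\tfrac{1}{p^m(p-1)}$, and I would upgrade this to the same strict inequality for $u_i-1$ in the spectral valuation by working over a closed affinoid subdisc $\overline{\mathcal{U}}\subseteq\mathcal{W}_m$, on which the spectral norm is attained and hence lies strictly below $p^{-1/p^m(p-1)}$. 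This yields $m$-analyticity of $k^G_{\mathcal{U}}$. For $k^H_{\mathcal{U}}=\Omega(k^G_{\mathcal{U}})=k^G_{\mathcal{U}}\circ\omega$, I would use that $\omega$ is induced by a homomorphism of group schemes over $\mathbb{Z}_p$, so on $\mathfrak{S}_{H,1}$ it is, after logarithms, a $\mathbb{Z}_p$-linear map into $\mathfrak{S}_{G,1}$ composed with a homomorphism to the finite group $\mathfrak{S}_G^{\mathrm{tor}}$; writing $\omega(s^H_j)=\sigma_j\prod_i s_i^{c_{ij}}$ with $c_{ij}\in\mathbb{Z}_p$ and $\sigma_j$ torsion gives $k^H_{\mathcal{U}}(s^H_j)=k^G_{\mathcal{U}}(\sigma_j)\prod_i u_i^{c_{ij}}$. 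Since $\{x\in\Lambda_G(\mathcal{U})^{\times}:v_p(x-1)>\tfrac{1}{p^m(p-1)}\}$ is an (open, hence closed) subgroup containing every $u_i$, it contains each $u_i^{c_{ij}}$ and their product; as the torsion scalar $k^G_{\mathcal{U}}(\sigma_j)$ is irrelevant to $m$-analyticity, $k^H_{\mathcal{U}}$ is $m$-analytic as well.

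The step I expect to be the main obstacle is precisely the passage from the pointwise bound $v_p(\lambda(s_i)-1)>\tfrac{1}{p^m(p-1)}$, valid at every point of $\mathcal{U}$, to the same \emph{strict} bound for $u_i-1$ in the spectral valuation, since it is this strictness that makes $v_p(p^m\log u_i)>\tfrac{1}{p-1}$ and so guarantees convergence of the exponential above; at the borderline valuation $\tfrac{1}{p^m(p-1)}$ the weaker estimate $v_p(p^m\log u_i)\ge\tfrac{1}{p-1}$ is not sufficient, which is exactly why the argument must be run over a closed subdisc sitting inside the wide-open $\mathcal{W}_m$ rather than over $\mathcal{U}$ itself. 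The remaining ingredients are routine: the dictionary between analytic characters and $p$-adic exponential convergence, and the integrality of the cocharacter map underlying $\omega$.
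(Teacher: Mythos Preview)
Your proposal is correct and follows essentially the same route as the paper. The paper's proof is extremely terse: it makes the same reductions (to $\mathfrak{S}_{G,1}$, and then to the one-parameter maps $z\mapsto k^{G}_{\mathcal{U}}(s_i^{z})$) and then simply cites \cite[Lemma 4.1.5]{recoleman} for the $m$-analyticity, which amounts to the exp/log estimate you spell out. You have in fact supplied the content hidden behind that citation, including the honest identification of the borderline issue at valuation $\tfrac{1}{p^{m}(p-1)}$.

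One small difference worth noting: for the $H$-case the paper disposes of the matter in one line by observing that $k^{H}_{\mathcal{U}}$ is the pullback of $k^{G}_{\mathcal{U}}$ along the \emph{algebraic} map $\omega$, and pullback of an $m$-analytic function by an algebraic (hence globally analytic) map is $m$-analytic. Your argument instead transports the bound on $u_i-1$ through the $\mathbb{Z}_p$-linear description of $\omega$ on $\mathfrak{S}_{H,1}$. Both are valid; the paper's formulation is slightly cleaner since it avoids having to track the torsion factor $\sigma_j$ and the closedness of the subgroup $\{x:v_p(x-1)>\tfrac{1}{p^{m}(p-1)}\}$, but your version has the virtue of being fully explicit.
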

\begin{proof}
The proof follows the method of \cite[Lemma 4.1.5]{recoleman}. As in \textit{op.cit.} it suffices take $\mathcal{U} = \mathcal{W}_{m}$. Let $\{s_{i}\}_{i}$ be a basis for the free $\Zp$-module $\mathfrak{S}_{G, 1}$. We identify 
$$
    \Lambda(\mathfrak{S}_{G}) \cong \mathcal{O}[\mathfrak{S}_{G}^{\mathrm{tor}}][[T_{1}, \ldots, T_{n_{G}}]]
$$
in such a way that $k_{\mathrm{univ}}^{G}(s_{i}) = 1 + T_{i}$ and similarly we identify
$$
    \Lambda_{G}(\mathcal{W}_{m}) \cong \mathcal{O}[\mathfrak{S}_{G}^{\mathrm{tor}}][[u_{1}, \ldots, u_{n_{G}}]]
$$
such that $T_{i} = \varepsilon_{i}u_{i}$ where $\varepsilon_{i} \in \mathcal{O}$ satisfies $v_{p}(\varepsilon_{i}) > \frac{1}{p^{m}(p - 1)}$ for each $i = 1, \ldots, n_{G}$. Thus $k_{\mathcal{U}}^{G}(s_{i}) = 1 + \varepsilon_{i}u_{i}$ and so for $s = \sum a_{i}s_{i} \in \mathfrak{S}_{G, 1}$, $a_{i} \in \Zp$, we have 
$$
    k_{\mathcal{U}}^{G}(s) = \prod_{i = 1}^{n_{G}}\left(\sum_{j_{i} = 1}^{\infty} {a_{i}\choose j_{i}}(\varepsilon u_{i})^{j_{i}}\right)
$$
and we are done since ${a_{i}\choose j_{i}}\varepsilon^{j_{i}}$ is locally $m$-analytic on $\Zp$ for all $j_{i}$. 
\end{proof}

\subsection{Hecke algebras}
Let $S$ be a finite set of primes and let $K^{Sp} = \prod_{\ell \notin S \cup \{p\}}G(\Z_{\ell}) \subset \mathcal{G}(\mathbb{A}_{f}^{S \cup \{p\}})$. Define 
$$
    \mathbb{T}_{S} := \mathcal{C}^{\infty}_{c}(K^{Sp} \backslash \mathcal{G}(\mathbb{A}^{S \cup \{p\}}_{f})/K^{Sp}, \mathbb{Z}_{p})
$$
the space of $\mathbb{Z}_{p}$-valued compactly supported locally constant $K^{Sp}$-biinvariant functions on $\mathcal{G}(\mathbb{A}_{f}^{S \cup \{p\}})$. It's well known that this is a commutative $\mathbb{Z}_{p}$-algebra under convolution.

Define 
$$
    \mathfrak{U}_{p}^{-} := \mathcal{C}_{c}^{\infty}(J_{G} \backslash J_{G}A^{-}J_{G}/J_{G},\Zp),
$$
the algebra locally constant, compactly supported, $\Zp$-valued, $J_{G}$-biinvariant functions on $J_{G}A^{-}J_{G}$, and similarly $\mathfrak{U}_{p}^{--}$. These are both commutative algebras under the convolution operation. Indeed, there is a $\mathbb{Z}_{p}$-algebra isomorphism 
$$
    \mathbb{Z}_{p}[A^{-}/A(\mathbb{Z}_{p})] \cong \mathfrak{U}^{-}_{p}. 
$$
We let $U'_{p}$ denote the image of the element $\tau \in A^{--}$ chosen in \S\ref{sec:setup} under this isomorphism. 
\begin{definition}
Define the unramified $Q_{G}$-parahoric Hecke algebra:
$$
    \mathbb{T}_{S, p}^{-}:= \mathbb{T}_{S} \otimes_{\Z}\mathfrak{U}^{-}_{p}.
$$
\end{definition}

\subsection{Locally analytic function spaces}

Let $\mathcal{U} \subset \mathcal{W}_{G}$ be a wide-open disc and let $\mathfrak{m}_{\mathcal{U}} 
$ denote the maximal ideal of $\Lambda_{\mathcal{U}}$. We allow the case that $\mathcal{U} = \{\lambda\}$
for some $\lambda \in X^{\bullet}(S_{G})$ in which case the character $k_{\mathcal{U}}^{G}$ is just $\lambda$.
\begin{definition}
 For $m \geq 0$ define 
$$
    \mathrm{LA}_{m}(\mathbb{Z}_{p}^{d}, \Lambda_{\mathcal{U}}) = \{f: \mathbb{Z}_{p}^{d} \to \Lambda_{\mathcal{U}}:  \forall \ \underline{a} \in \mathbb{Z}_{p}^{d}, \exists f_{\underline{a}} \in \Lambda_{\mathcal{U}}\langle T_{1} \ldots, T_{d}\rangle  \ \text{s.t.}\  f(\underline{a} + p^{m}\underline{x}) = f_{\underline{a}}(\underline{x}) \ \forall \underline{x} \in \mathbb{Z}_{p}^{d}\}.
$$
This space is isomorphic to $\prod_{\underline{a}}\Lambda_{\mathcal{U}}\langle p^{-m}T_{1}, \ldots, p^{-m}T_{d} \rangle$ as a $\Lambda_{\mathcal{U}}$-module, where $\underline{a}$ runs over $(\mathbb{Z}/p^{m}\mathbb{Z})^{d}$. 
\end{definition}

 The logarithm map gives an isomorphism $N_{G}(\mathbb{Z}_{p}) \cong (\mathrm{Lie}N_{G})(\mathbb{Z}_{p}) \cong \mathbb{Z}_{p}^{d_{G}}$ which we use to identify $\mathrm{LA}_{m}(\Z^{d_{G}}_{p}, \Lambda_{\mathcal{U}})$ with a space of functions on $N_{G}(\Zp)$.
\begin{definition}
For $m\geq 0$, define 
$$
    A^{\mathrm{an}}_{\mathcal{U}, m} = \{f: U^{G}_{\mathrm{Bru}}(\mathbb{Z}_{p}) \to \Lambda_{\mathcal{U}}: f(\overline{n}t n) = k_{\mathcal{U}}^{G}(t)f(n), \ \text{and} \  f \vert_{N_{G}(\mathbb{Z}_{p})} \in  \mathrm{LA}_{m}(\mathbb{Z}_{p}^{d_{G}}, \Lambda_{\mathcal{U}})\}
$$
equipped with the $\mathfrak{m}_{\mathcal{U}}$-adic topology. In the case $\mathcal{U} = \{\lambda\}$, we write $A^{\mathrm{an}}_{\lambda, m}$ in place of $A^{\mathrm{an}}_{\mathcal{U}, m}$.
\end{definition}
Restriction to $N_{G}(\mathbb{Z}_{p})$ gives a $\Lambda_{\mathcal{U}}$-module isomorphism $A^{\mathrm{an}}_{\mathcal{U}, m} \cong  \mathrm{LA}_{m}(\mathbb{Z}_{p}^{d_{G}}, \Lambda_{\mathcal{U}})$ with inverse $f \mapsto \left( \overline{n}tn \mapsto k_{\mathcal{U}}^{G}(t)f(n) \right)$. We give these spaces an action of $a \in A^{-}$ via
$$
    (a \cdot f)(\overline{n}\ell n) = f(\overline{n}\ell a n a^{-1}). 
$$
The following proposition is well-known and the proof is very similar to that of Proposition \ref{prop:invar} (which is not well-known) so we omit it. 
\begin{proposition}
Suppose $\mathcal{U} \subset \mathcal{W}_{m}$ for some $m \geq 0$. The modules $A^{\mathrm{an}}_{\mathcal{U}, m}$ are preserved by both the right translation action of $J_{G}$ and the action of $A^{-}$ described above.
\end{proposition}
The action of $A^{-}$ on $A^{\mathrm{an}}_{\mathcal{U}, m}$ thus extends to an action of $J_{G}A^{-}J_{G}$. Adapting the proof of \cite[3.2.8]{urbaneigen} to our situation we see that for $a \in A^{--}$
\begin{equation} \label{eq:anal}
    aA^{\mathrm{an}}_{\mathcal{U}, m + 1} \subset A^{\mathrm{an}}_{\mathcal{U}, m}
\end{equation}
and thus the action of $A^{--}$ is by compact operators since the inclusions $ A^{\mathrm{an}}_{\mathcal{U}, m} \subset A^{\mathrm{an}}_{\mathcal{U}, m + 1}$ are compact.

If $\lambda \in X^{\bullet}_{+}(S_{G})$ then there is a natural $J_{G}$-equivariant inclusion 
$$
    V^{G}_{\lambda} \hookrightarrow A^{\mathrm{an}}_{\lambda, m},
$$
preserving the $A^{-}$ action.
\begin{definition}
If  $\lambda \in \mathcal{U} \subset \mathcal{W}_{m}$ is the restriction to $\mathfrak{S}_{G}$ of an algebraic character of $(S_{G}/S_{G}^{0})(\mathbb{Q}_{p})$, then there is a natural specialisation map 
$$
    \rho_{\lambda}: A^{\mathrm{an}}_{\mathcal{U}, m} \to A_{\lambda, m}^{\mathrm{an}}
$$
given by post-composition with the map
$$
    \Lambda_{\mathcal{U}} \to \Lambda_{\mathcal{U}} \otimes_{\lambda}\mathcal{O}_{L} \cong \mathcal{O}_{L}.
$$
\end{definition}
The space $A^{\mathrm{an}}_{\mathcal{U}, m}$ is not the unit ball in a Banach algebra, but we can define a basis $\{e_{i}\}_{i \in I}$ for a countable indexing set $I$ such that for any $f \in A^{\mathrm{an}}_{\mathcal{U}, m}$ there are $a_{i} \in \Lambda_{\mathcal{U}}$ such that
\begin{itemize}
    \item $a_{i} \to 0$ in the cofinite filtration on $I$.
    \item We have $f = \sum_{i \in I}a_{i}e_{i}$.
\end{itemize}
This can be seen from the identification of $A^{\mathrm{an}}_{\mathcal{U}, m}$ with a product of Tate algebras. This basis is sufficient to define Fredholm determinants of compact operators. 
\begin{definition}
Let $\mathcal{V} \subset \mathcal{U} \subset \mathcal{W}_{G}$ be an affinoid contained in a wide open disc $\mathcal{U}$. Define
$$
    A^{\mathrm{an}}_{\mathcal{V}, m} :=  A^{\mathrm{an}}_{\mathcal{U}, m} \hat{\otimes}_{\Lambda_{\mathcal{U}}} \mathcal{O}(\mathcal{V})^{\circ}
$$
where $\mathcal{O}(\mathcal{V})^{\circ}$ are the bounded-by-1 global sections of $\mathcal{V}$. 
\end{definition}
The space $A^{\mathrm{an}}_{\mathcal{V}, m}[1/p] =  A^{\mathrm{an}}_{\mathcal{U}, m} \hat{\otimes}_{\Lambda_{\mathcal{U}}} \mathcal{O}(\mathcal{V})$ is an orthonormalisable Banach $\mathcal{O}(\mathcal{V})$-module with unit ball $A^{\mathrm{an}}_{\mathcal{V}, m}$.

\subsection{Locally Iwasawa functions} \label{sec:iwasawa}
We continue with the notation of the previous section. 

\begin{definition}
For $m \geq 0$ define 
$$
    \mathrm{LI}_{m + 1}(\mathbb{Z}_{p}^{d_{G}}, \Lambda_{\mathcal{U}} ) = \{f: \mathbb{Z}_{p}^{d_{G}} \to \Lambda_{\mathcal{U}}:  \forall \ \underline{a} \in \mathbb{Z}_{p}^{d_{G}}, \exists f_{\underline{a}} \in \Lambda_{\mathcal{U}}[[ T_{1} \ldots, T_{d_{G}}]]  \ \text{s.t.}\  f(\underline{a} + p^{m + 1}\underline{x}) = f_{\underline{a}}(p\underline{x}) \ \forall \underline{x} \in \mathbb{Z}_{p}^{d_{G}}\}.
$$
This space admits a natural structure as a $\Lambda_{\mathcal{U}}[[T_{1}, \ldots, T_{d_{G}}]]$-module and is isomorphic to \\ $\prod_{\underline{a}}\Lambda_{\mathcal{U}}[[p^{-m}T_{1} \ldots, p^{-m}T_{d_{G}}]]$.
\end{definition}

\begin{definition}Let $\mathfrak{n}$ be the maximal ideal of $\Lambda_{\mathcal{U}}[[T_{1}, \ldots, T_{d_{G}}]]$. Define a filtration $\mathrm{Fil}_{m + 1}^{n}$ on $\mathrm{LI}_{m + 1}(\mathbb{Z}_{p}^{d_{G}}, \Lambda_{\mathcal{U}})$ by 
$$
\mathrm{Fil}_{m + 1}^{n} := \mathfrak{n}^{n} \mathrm{LI}_{m + 1}(\mathbb{Z}_{p}^{d_{G}}, \Lambda_{\mathcal{U}}).
$$
\end{definition}
We see that 
$$
     \mathrm{LI}_{m + 1}(\mathbb{Z}_{p}^{d_{G}}, \Lambda_{\mathcal{U}}) \cong \varprojlim_{n} \mathrm{LI}_{m + 1}(\mathbb{Z}_{p}^{d_{G}}, \Lambda_{\mathcal{U}})/\mathrm{Fil}_{m + 1}^{n}.
$$
The modules $\mathrm{LI}_{m + 1}(\mathbb{Z}_{p}^{d_{G}}, \Lambda_{\mathcal{U}})/\mathrm{Fil}_{m + 1}^{n}$ are finite and thus the modules $ \mathrm{LI}_{m + 1}(\mathbb{Z}_{p}^{d_{G}}, \Lambda_{\mathcal{U}})$ are profinite and in particular they are $\mathfrak{n}$-adically complete and separated.

For $m \geq 0$ there is a chain of inclusions 
\begin{equation} \label{eq:inc}
    \mathrm{LA}_{m}(\mathbb{Z}_{p}^{d_{G}}, \Lambda_{\mathcal{U}}) \subset \mathrm{LI}_{m + 1}(\mathbb{Z}_{p}^{d_{G}}, \Lambda_{\mathcal{U}}) \subset \mathrm{LA}_{m + 1}(\mathbb{Z}_{p}^{d_{G}}, \Lambda_{\mathcal{U}})
\end{equation}
given by restriction corresponding to the obvious inclusions
\begin{align*}
    \prod_{\underline{a} \mod p^{m}}\Lambda_{\mathcal{U}}\langle p^{-m}T_{1}, \ldots, p^{-m}T_{d_{G}}\rangle  &\hookrightarrow \prod_{\underline{a} \ \mathrm{mod} \ p^{m}}\Lambda_{\mathcal{U}}[[p^{-m}T_{1}, \ldots, p^{-m}T_{d_{G}}]] \\ &\hookrightarrow \prod_{\underline{a}  \ \mathrm{mod} \ p^{m + 1}}\Lambda_{\mathcal{U}}\langle p^{-(1 + m)}T_{1}, \ldots, p^{-(1 + m)}T_{d_{G}}\rangle.
\end{align*}

Write $\varinjlim_{m} \mathrm{LA}_{m}(\mathbb{Z}_{p}^{d_{G}}, \Lambda_{\mathcal{U}}) = \mathrm{LA}(\mathbb{Z}_{p}^{d_{G}}, \Lambda_{\mathcal{U}})$ for the space of all $\Lambda_{\mathcal{U}}$-valued locally analytic functions on $\mathbb{Z}_{p}^{d_{G}}$. The inductive systems $\{\mathrm{LA}_{m}(\mathbb{Z}_{p}^{d_{G}}, \Lambda_{\mathcal{U}})\}_{m \geq 0}$ and $\{\mathrm{LI}_{m + 1}(\mathbb{Z}_{p}^{d_{G}}, \Lambda_{\mathcal{U}})\}_{m \geq 0}$ are final in the inductive system 
$$
    \ldots \to  \mathrm{LA}_{m}(\mathbb{Z}_{p}^{d_{G}}, \Lambda_{\mathcal{U}}) \to \mathrm{LI}_{m + 1}(\mathbb{Z}_{p}^{d_{G}}, \Lambda_{\mathcal{U}}) \to \mathrm{LA}_{m + 1}(\mathbb{Z}_{p}^{d_{G}}, \Lambda_{\mathcal{U}}) \to \ldots,
$$
with the arrows given by the inclusions \eqref{eq:inc}. Thus 
$$
    \varinjlim_{m} \mathrm{LI}_{m}(\mathbb{Z}_{p}^{d_{G}}, \Lambda_{\mathcal{U}}) = \mathrm{LA}(\mathbb{Z}_{p}^{d_{G}}, \Lambda_{\mathcal{U}}).
$$

Suppose $\mathcal{U} \subset \mathcal{W}_{m}$.

\begin{definition}
For $m \geq 0$ define
$$
    A^{\mathrm{Iw}}_{\mathcal{U}, m + 1} := \{f: U_{\mathrm{Bru}}^{G}(\mathbb{Z}_{p}) \to \Lambda_{\mathcal{U}}: f(\overline{n}t n) = k_{\mathcal{U}}^{G}(t)f(n), \ \text{and} \  f \vert_{N_{G}(\mathbb{Z}_{p})} \in  \mathrm{LI}_{m + 1}(\mathbb{Z}_{p}^{d_{G}}, \Lambda_{\mathcal{U}})\},
$$
given the $\mathfrak{m}_{\mathcal{U}}$-adic topology. As in the locally analytic case we write $A^{\mathrm{Iw}}_{\lambda, m}$ in place of $A^{\mathrm{Iw}}_{\mathcal{U}, m}$ when $\mathcal{U} = \{\lambda\}$.
\end{definition}
These spaces are isomorphic to $\mathrm{LI}_{m}(\mathbb{Z}_{p}^{d_{G}}, \Lambda_{\mathcal{U}})$ as $\Lambda_{\mathcal{U}}$-modules and the filtration on $\mathrm{LI}_{m}(\mathbb{Z}_{p}^{d_{G}}, \Lambda_{\mathcal{U}})$ defines a filtration $\mathrm{Fil}_{\mathcal{U}, m}^{n} A^{\mathrm{Iw}}_{\mathcal{U}, m} \subset A^{\mathrm{Iw}}_{\mathcal{U}, m}$. When no danger of ambiguity exists we will write $\mathrm{Fil}_{\mathcal{U}, m}^{n}$ for $\mathrm{Fil}_{\mathcal{U}, m}^{n} A^{\mathrm{Iw}}_{\mathcal{U}, m}$.
\begin{proposition} \label{prop:invar}
for $n \geq 1$ the modules $A^{\mathrm{Iw}}_{\mathcal{U} m}$ and $ \mathrm{Fil}_{\mathcal{U}, m}^{n}$ are preserved by the actions of $J_{G}$ and $A^{-}$ inherited from those on $A^{\mathrm{an}}_{\mathcal{U}, m + 1}$.
\end{proposition}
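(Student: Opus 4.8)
The plan is to reduce everything to the locally analytic case, where the corresponding statement (invariance of $A^{\mathrm{an}}_{\kappa, m+1}$ under $J_G$ and $A^-$) has already been recorded. The module $A^{\mathrm{Iw}}_{\kappa, m}$ sits inside $A^{\mathrm{an}}_{\kappa, m+1}$ via the chain of inclusions $\mathrm{LA}_m \subset \mathrm{LI}_m \subset \mathrm{LA}_{m+1}$ established above (transported through restriction to $N_G(\mathbb{Z}_p)$ and the isomorphisms $A^{\mathrm{an}}_{\kappa,\bullet} \cong \mathrm{LA}_\bullet$, $A^{\mathrm{Iw}}_{\kappa,m} \cong \mathrm{LI}_m$), and both the $J_G$-action (right translation) and the $A^-$-action $(a\cdot f)(\bar n\ell n) = f(\bar n\ell a n a^{-1})$ are defined by the same formulas on the ambient module. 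So the content is: (i) if $f \in A^{\mathrm{an}}_{\kappa,m+1}$ has the property that $f|_{N_G(\mathbb{Z}_p)} \in \mathrm{LI}_m(\mathbb{Z}_p^d, B)$, then so does $g\cdot f$ for $g \in J_G$ and $a\cdot f$ for $a \in A^-$; and (ii) this operation respects the filtration $\mathrm{Fil}^n_{\kappa,m} = \mathfrak{n}_B^n A^{\mathrm{Iw}}_{\kappa,m}$.

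For (i), I would first dispose of the torus and unipotent pieces. Using the Iwahori decomposition $J_G = (J_G \cap \bar N_G(\mathbb{Z}_p)) \times L_G(\mathbb{Z}_p) \times N_G(\mathbb{Z}_p)$, it suffices to check invariance under right translation by each factor separately. Right translation by $N_G(\mathbb{Z}_p)$ corresponds, on the $\mathrm{LI}_m$ side, to translation of the argument $\underline x \mapsto \underline x + \underline b$ with $\underline b \in \mathbb{Z}_p^d$; since $\mathrm{LI}_m(\mathbb{Z}_p^d, B)$ is by definition a condition stable under arbitrary translation of $\mathbb{Z}_p^d$ (the clopen cosets $\underline a + p^{m+1}\mathbb{Z}_p^d$ are permuted and on each the power series description is preserved — a shift within a coset just re-expands a power series in $B[[p^{-m}T]]$ around a new center, which stays in the same ring because $p^{m+1}\mathbb{Z}_p \cdot p^{-m} = p\mathbb{Z}_p$), this is immediate. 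Right translation by $L_G(\mathbb{Z}_p)$ and by $J_G \cap \bar N_G(\mathbb{Z}_p)$ is the genuinely non-obvious part: one uses the Bruhat-cell coordinates to write, for $g$ in one of these subgroups and $n \in N_G(\mathbb{Z}_p)$, the product $ng = \bar n' \ell' n'$ with $\bar n' \in \bar N_G(\mathbb{Z}_p)$, $\ell' \in L_G(\mathbb{Z}_p)$, $n' \in N_G(\mathbb{Z}_p)$, so that $(g\cdot f)(n) = \kappa(\ell') f(n')$, and then one must check that $n \mapsto \ell'$ and $n \mapsto n'$ are given by functions lying in the appropriate $\mathrm{LI}_m$ (or analytic) classes and that $\kappa$, being $m$-analytic, pulls back to such a function. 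This is exactly where the hypothesis $\mathcal{U} \subset \mathcal{W}_m$ (equivalently, $m$-analyticity of $\kappa = k_{\mathcal{U}}$, Lemma \ref{lem:analchar}) enters, and it is the step I expect to be the main obstacle: one needs the integral-structure estimates showing that these coordinate-change maps, which are polynomial with $\mathbb{Z}_p$-coefficients on the formal group, improve or preserve the radius of analyticity in the required way. The cleanest route is to observe that this is precisely the computation underlying the already-cited invariance of $A^{\mathrm{an}}_{\kappa,m+1}$, combined with the stronger fact that these coordinate changes send the $\mathrm{LI}_m$-condition to itself because they are built from $p$-adically integral polynomial maps; I would cite the adaptation of \cite[3.2.8]{urbaneigen} as in equation \eqref{eq:anal} and note that the same argument, tracking power series rather than just convergent series, gives the $\mathrm{LI}_m$ statement. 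The $A^-$-action is handled the same way, with the additional point that conjugation $n \mapsto ana^{-1}$ by $a \in A^-$ contracts $N_G(\mathbb{Z}_p)$ into itself (this is the defining property of $A^-$), so it certainly preserves every $\mathrm{LI}_m$-class; again no loss of analyticity occurs.

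For (ii), once we know $A^{\mathrm{Iw}}_{\kappa,m}$ is stable, invariance of $\mathrm{Fil}^n_{\kappa,m} = \mathfrak{n}_B^n A^{\mathrm{Iw}}_{\kappa,m}$ is almost formal: the $J_G$- and $A^-$-actions are $B$-linear, hence $\mathfrak{n}_B$ is preserved, hence so is every power $\mathfrak{n}_B^n$, hence so is $\mathfrak{n}_B^n \cdot A^{\mathrm{Iw}}_{\kappa,m} = \mathrm{Fil}^n_{\kappa,m}$. The only subtlety is making sure the module identification $A^{\mathrm{Iw}}_{\kappa,m} \cong \mathrm{LI}_m(\mathbb{Z}_p^d, B)$ intertwines the $\mathfrak{m}_B$-adic / $\mathfrak{n}_B$-adic structures correctly, but this was set up in the definition of the filtration, so it is bookkeeping. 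I would therefore present the proof as: reduce to $N_G(\mathbb{Z}_p)$-coordinates; verify stability of the $\mathrm{LI}_m$-condition under the three Iwahori factors and under $A^-$-conjugation by re-running the estimate of \eqref{eq:anal} with power series; and then deduce the filtration statement from $B$-linearity. I would also remark that the proposition holds verbatim for $\mathcal{P}^{H,\mathrm{Iw}}_{\kappa,m}$ by the same argument with $\bar N_H$ in place of $N_G$.
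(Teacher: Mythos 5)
Your overall strategy — reduce to the $\mathrm{LI}_m$ description in $N_G(\mathbb{Z}_p)$-coordinates, split along the Iwahori decomposition, and re-run the Urban-type estimate with power series — is reasonable, but it is genuinely different from the paper's, and the step you label as "bookkeeping" is where the real content of the filtration statement lives.

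The paper does not check the group action factor-by-factor in coordinates. Instead it identifies $A^{\mathrm{Iw}}_{\kappa,m}$ with the bounded-by-one rigid functions on the wide-open rigid analytic group $\mathcal{J}_{G,m}^{\circ} = \mathcal{G}_m^{\circ}\cdot J_G$ satisfying the left transformation rule under $\bar{Q}_G(\mathbb{Z}_p)\cdot\mathcal{G}_m^{\circ}$; once you have that geometric model, $J_G$-stability is right translation of functions on a group containing $J_G$, with no coordinate estimate at all. The $A^-$-invariance is then a contraction argument on the root-subgroup polydiscs. This rigid-analytic packaging is precisely what you are trying to recover by hand via "the same argument, tracking power series rather than convergent series," and it is the cleaner route.

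The genuine gap is in your step (ii). You write that since the actions are $B$-linear, $\mathfrak{n}_B$ is preserved, hence $\mathrm{Fil}^n_{\kappa,m} = \mathfrak{n}_B^n A^{\mathrm{Iw}}_{\kappa,m}$ is preserved. But $\mathfrak{n}_B$ is the maximal ideal of $B[[T_1,\dots,T_d]]$ — it contains the coordinate variables $T_i$ — whereas the $J_G$- and $A^-$-actions are only $B$-linear, not $B[[T_1,\dots,T_d]]$-linear: they permute and substitute the $T_i$. So $B$-linearity gives you preservation of $\mathfrak{m}_B^n A^{\mathrm{Iw}}_{\kappa,m}$, which is not a priori the same as $\mathfrak{n}_B^n A^{\mathrm{Iw}}_{\kappa,m}$. (One can check that $\mathfrak{n}_B A^{\mathrm{Iw}}_{\kappa,m} = \mathfrak{m}_B A^{\mathrm{Iw}}_{\kappa,m}$ when the radius $p^{-(m-1)}$ is strictly less than $1$, because then $T_i = p^{m-1}\cdot(p^{-(m-1)}T_i) \in \mathfrak{m}_B\,A^{\mathrm{Iw}}_{\kappa,m}$; but at the smallest admissible level the two ideals differ, and in any case you neither state nor use this reduction.) The paper avoids the issue entirely: it observes that $A^{\mathrm{Iw}}_{\mathbbm{1},m}$ is a ring with $J_G$-equivariant multiplication, that $\mathrm{Fil}^n_{\kappa,m} = \mathrm{Fil}^n_{\mathbbm{1},m}\cdot A^{\mathrm{Iw}}_{\kappa,m}$ with $\mathrm{Fil}^n_{\mathbbm{1},m} = (\mathrm{Fil}^1_{\mathbbm{1},m})^n$, so it suffices to treat $n = 1$; and then it gives a manifestly $J_G$-invariant pointwise description of $\mathrm{Fil}^1_{\kappa,m}$ (vanishing modulo $p$ at a set of representatives $\Gamma$ of $J_G$ mod $p^{m+1}$), from which invariance follows by a one-line computation. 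This characterization is the missing idea in your proposal; without something like it, or an explicit identification $\mathfrak{n}_B^n A^{\mathrm{Iw}} = \mathfrak{m}_B^n A^{\mathrm{Iw}}$, the filtration statement is not established.

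A secondary, smaller point: for the $L_G(\mathbb{Z}_p)$- and $\bar{N}_G(\mathbb{Z}_p)$-factors you defer entirely to "re-run \eqref{eq:anal} with power series." That is a plausible plan, but it is the substantive part of the locally analytic computation and needs to be carried out (or replaced by the rigid-group argument). As written, the proposal does not contain a proof of stability under the full $J_G$.
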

\begin{proof}
Write $G^{\mathrm{an}}$ for the rigid analytification of $G_{\Qp}$. For $m \in \mathbb{Q}_{\geq 0}$, let $\mathcal{G}_{m}$ be the $\Qp$-rigid subgroup of $G^{\mathrm{an}}$ given by the rigid generic fibre (in the sense of Berthelot) of the formal group $\mathfrak{G}_{m}$ defined by
$$
    \mathfrak{G}_{m}(R) := \mathrm{ker}\left(G(R) \to G(R/p^{m})\right).
$$
for an admissible $\Zp$-algebra $R$ i.e. a quotient of $\Zp\langle T_{1}, \ldots, T_{n}\rangle$ for some $n$. 
Set $\mathcal{G}_{m}^{\circ} = \cup_{m' > m}\mathcal{G}_{m'} \subset \mathcal{G}_{0}$.
Define a $\Qp$-rigid analytic group $\mathcal{J}_{G,m}^{\circ} =  \mathcal{G}_{m}^{\circ} \cdot J_{G}$ (the group generated by $J_{G}$ and $\mathcal{G}_{m}^{\circ}$ in $\mathcal{G}_{0}$). 
\begin{lemma}
This group admits an Iwahori decomposition 
$$
    \mathcal{J}_{G, m}^{\circ} = (\overline{\mathcal{N}}_{m}^{\circ} \cdot \overline{N}_{G}(\mathbb{Z}_{p})) \times (\mathcal{L}_{m}^{\circ} \cdot L_{G}(\mathbb{Z}_{p})) \times (\mathcal{N}_{m}^{\circ} \cdot N_{G}(\mathbb{Z}_{p})).
$$
\end{lemma}
\begin{proof}
The proof of \cite[Proposition 3.3.2]{boxer2021higher} generalises easily to this case. 
\end{proof}
If we choose a set $\Gamma$ of representatives for $J_{G} \ \mathrm{mod} \ p^{m}$ then 
$$
    \mathcal{J}_{G,m}^{\circ} = \sqcup_{\gamma \in \Gamma}\mathcal{G}^{\circ}_{m}\gamma.
$$
Consider $\mathcal{O}(\mathcal{J}_{G, m}^{\circ})_{\Lambda_{\mathcal{U}}} := \mathcal{O}(\mathcal{J}_{G, m}^{\circ})^{\leq 1} \hat{\otimes}_{\mathbb{Z}_{p}} \Lambda_{\mathcal{U}}$. Since by assumption $k_{\mathcal{U}}^{G}: \mathfrak{S}_{G}\to \Lambda_{\mathcal{U}}^{\times}$ is an $m$-analytic character then by definition it extends to a character $k_{\mathcal{U}}^{G}: \mathcal{L}^{\circ}_{m}\cdot L_{G}(\Zp) \to \Lambda_{\mathcal{U}}^{\times}$ and if $f \in \mathrm{LI}_{m}(N_{G}(\Zp), \Lambda_{U})$ then $f$ extends uniquely to an element of $\mathcal{O}\left(\mathcal{N}_{m}^{\circ}N_{G}(\Zp)\right)_{\Lambda_{\mathcal{U}}}$. The product $k_{\mathcal{U}}^{G}\times f$ defines a function on $\mathcal{O}(\mathcal{J}_{G}^{\circ})_{\Lambda_{\mathcal{U}}}$ by extending $k_{\mathcal{U}}^{G}$ trivially to $(\overline{\mathcal{N}}_{m}^{\circ} \cdot \overline{N}_{G}(\mathbb{Z}_{p})) \times (\mathcal{L}_{m}^{\circ} \cdot L_{G}(\mathbb{Z}_{p}))$ which induces an isomorphism
$$
    \mathcal{O}(\mathcal{J}^{\circ}_{G})^{(k_{\mathcal{U}}^{G})}_{\Lambda_{\mathcal{U}}} \cong A^{\mathrm{Iw}}_{\mathcal{U}, m},
$$
where the isotypic $k_{\mathcal{U}}^{G}$-component is taken with respect to the left translation action of $\overline{Q}_{G}(\Zp)$.

Clearly the right translation action of the subgroup $J_{G} \subset \mathcal{J}_{G}^{\circ}$ preserves $\mathcal{O}(\mathcal{J}_{G}^{\circ})$ and commutes wtih isotypic components, giving us an action on $A^{\mathrm{Iw}}_{\mathcal{U}, m}$. 

Let $\mathbbm{1}$ be the trivial character valued in $\Lambda_{U}^{\times}$. The $\Lambda_{U}$-algebra $A^{\mathrm{Iw}}_{\mathbbm{1}, m} $ embeds as a subring of $\mathcal{O}(\mathcal{J}_{G, m}^{\circ})_{\Lambda_{\mathcal{U}}}$ and acts on $A^{\mathrm{Iw}}_{\mathcal{U}, m}$ via multiplication of functions in $\mathcal{O}(\mathcal{J}_{G, m}^{\circ})_{\Lambda_{\mathcal{U}}}$. Then $\mathrm{Fil}^{n}_{
\mathbbm{1}, m}$ is an ideal of $A^{\mathrm{Iw}}_{\mathbbm{1}, m}$ and $\mathrm{Fil}^{n}_{
\mathbbm{1}, m} = (\mathrm{Fil}^{1}_{
\mathbbm{1}, m})^{n}$. Furthermore, 
$$
    \mathrm{Fil}^{n}_{\mathcal{U}, m} = (\mathrm{Fil}^{1}_{\mathbbm{1}, m})^{n}A^{\mathrm{Iw}}_{\mathcal{U}, m}. 
$$
Since the action of $J_{G}$ respects the algebra structure of $\mathcal{O}(\mathcal{J}_{G,m}^{\circ})_{\Lambda_{\mathcal{U}}}$ in order to show that the filtration is preserved by $J_{G}$ it suffices to show that $\mathrm{Fil}^{1}_{\mathcal{U}, m}$ is preserved by $J_{G}$. To see this, we observe that 
$$
    \mathrm{Fil}^{1}_{\mathcal{U}, m} = \{F \in A^{\mathrm{Iw}}_{
    \mathcal{U}, m}: F(\gamma) \equiv 0 \ \mathrm{mod} \ \mathfrak{m}_{\mathcal{U}} \ \forall \ \gamma \in \Gamma\},
$$
so taking $F \in \mathrm{Fil}^{1}_{\mathcal{U}, m}, j \in J_{G}$ then for $\gamma \in \Gamma$ there is $\gamma_{j} \in \Gamma$ and $\varepsilon \in \mathcal{G}^{\circ}_{m}$ such that $\varepsilon \gamma_{j} = \gamma j$ and thus, since $p \in \mathfrak{m}_{\mathcal{U}}$ and $\varepsilon \equiv \ 1 \ \mathrm{mod} \ p$, we have
$$
    (j \cdot F)(\gamma) = F(\varepsilon \gamma_{j}) \equiv 0 \ \mathrm{mod} \ \mathfrak{m}_{\mathcal{U}}.
$$

The action of $A^{-}$ on $\mathcal{J}_{G,m}^{\circ}$ is by
$$
    a * \mathcal{J}_{G,m}^{\circ} = (\overline{\mathcal{N}}_{m}^{\circ} \cdot\overline{N}_{G}(\mathbb{Z}_{p})) \times (\mathcal{L}_{m}^{\circ} \cdot L_{G}(\mathbb{Z}_{p})) \times a(\mathcal{N}_{m}^{\circ} \cdot N_{G}(\mathbb{Z}_{p}))a^{-1}
$$
We can see that this is well-defined noting that for a set of representatives $\Gamma'$ of $N_{G}(\mathbb{Z}_{p})$ mod $p^{m}$ we have 
$$
    \mathcal{N}_{m}^{\circ} \cdot N_{G}(\mathbb{Z}_{p}) = \sqcup_{\gamma \in \Gamma'} \mathcal{N}_{m}^{\circ} \cdot \gamma
$$
and each $\mathcal{N}_{m}^{\circ} \cdot \gamma$ is isomorphic to a direct product of balls $\mathcal{U}_{\alpha,m}^{\circ} = \cup_{m' > m}\mathcal{U}_{\alpha, m}$, where $\mathcal{U}_{\alpha, m}$ is the ball of radius $p^{-m}$ and centre $0$ contained in the root space $\mathcal{U}_{\alpha}$: 
$$
\mathcal{N}_{m}^{\circ} \cdot \gamma= \prod_{\alpha \in \Phi^{L}} \mathcal{U}^{\circ}_{\alpha, m},
$$
where $\Phi^{L} = \Phi_{G} \backslash \Phi_{L}$. Thus we have an isomorphism
$$
    \mathcal{N}_{m}^{\circ} \cdot N_{G}(\mathbb{Z}_{p}) = \prod_{\alpha \in \Phi^{L}} \mathcal{U}_{\alpha, m} \times N_{G}(\mathbb{Z}/p^{m}\mathbb{Z})
$$
with the action of $a \in A^{-}$ given by 
\begin{align*}
     a\mathcal{N}_{m}^{\circ} \cdot N_{G}(\mathbb{Z}_{p})a^{-1} &= \prod_{\alpha \in \Phi^{L}} p^{v_{p}(\alpha(a))}\mathcal{U}_{\alpha, m} \times aN_{G}(\mathbb{Z}/p^{m}\mathbb{Z})a^{-1} \\ 
     &= \prod_{\alpha \in \Phi^{L}} \mathcal{U}_{\alpha, m + v_{p}(\alpha(a))} \times aN_{G}(\mathbb{Z}/p^{m}\mathbb{Z})a^{-1} 
\end{align*}
and $\mathcal{U}_{\alpha, m + v_{p}(\alpha(a))} \subset \mathcal{U}_{\alpha, m}$ since $v_{p}(\alpha(a)) \geq 0$ by definition of $A^{-}$, so $ a\mathcal{N}_{m}^{\circ} \cdot N_{G}(\mathbb{Z}_{p})a^{-1} \subset  \mathcal{N}_{m}^{\circ} \cdot N_{G}(\mathbb{Z}_{p})$. We can prove the filtration $\mathrm{Fil}^{n}_{\mathcal{U}, m}$ is invariant under the action of $A^{-}$ in a similar way to that of $J_{G}$. Since the action of $A^{-}$ commutes with taking isotypic components for the left translation action of $Q_{G}(\Zp)$ we are done. 

\end{proof}

\begin{corollary}
Let $K^{p} \subset \mathcal{G}(\mathbb{A}^{p})$ be a neat open-compact subgroup. The modules $A^{\mathrm{Iw}}_{\mathcal{U}, m}$ induce lisse \'etale sheaves $\mathscr{A}^{\mathrm{Iw}}_{\mathcal{U}, m}$ over $Y_{G}(K^{p}J_{G})$. 
\end{corollary}
\begin{proof}
As each $A^{\mathrm{Iw}}_{\mathcal{U}, m}/\mathrm{Fil}^{n}_{\mathcal{U}, m}$ is finite it defines an inverse system of constructible \'etale sheaves. 
\end{proof}
\section{Locally analytic branching laws}
\label{sec:LABL}
Let $\mathcal{U} \subset \mathcal{W}_{m} \subset \mathcal{W}_{G}$ be a wide-open disc for some $m \geq 0$ and let $c := \mathrm{dim}_{\R}Y_{G} - \mathrm{dim}_{\R}Y_{H} - \mathrm{rk}_{\R}\left(\frac{Z_{H}}{Z_{G} \cap H}\right)$, where $\mathrm{rk}_{\R}$ denotes the split rank over $\R$. We construct branching maps 
$$
    H^{i}(Y_{H}(Q_{H}^{0} \cap u^{-1}U_{n}u), \mathcal{O}) \to H^{i + c}(Y_{G}(V_{n}), A_{\mathcal{U}, m}^{\mathrm{an}})
$$
interpolating the algebraic branching maps of Section \ref{sec:branchlaw}.

\begin{lemma} \label{lem:welf}
The function 
\begin{align*}
    \tilde{f}^{\mathrm{sph}}_{\mathcal{U}}: U_{\mathrm{sph}}(\mathbb{Z}_{p}) \to \Lambda_{\mathcal{U}} \\
    \overline{n}\ell u q \mapsto k_{\mathcal{U}}^{G}(\ell) 
\end{align*}
for $\overline{n}\ell \in \overline{Q}_{G}(\Zp), q \in Q_{H}^{0}(\Zp)$ is well-defined.
\end{lemma}
\begin{proof}
 For $i = 1,2$, let $q_{i} \in Q_{H}^{0}(\mathbb{Z}_{p})$ and $\overline{n}_{i}\ell_{i} \in \overline{Q}_{G}(\mathbb{Z}_{p})$, and suppose $\overline{n}_{1}\ell_{1} uq_{1} = \overline{n}_{2}\ell_{2}uq_{2}$, then rearranging we get 
\begin{equation} \label{eq:grpmess} \tag{$\dagger$}
    (\overline{n}_{2}\ell_{2}u)^{-1}\overline{n}_{1}\ell_{1} u \in Q_{H}^{0}(\mathbb{Z}_{p}).
\end{equation}
We thus have that 
$$
    g := u^{-1}\ell_{2}^{-1}\overline{n}_{2}^{-1}\overline{n}_{1}\ell_{1}u \in Q^{0}_{H}(\mathbb{Z}_{p}) \cap u^{-1}\overline{Q}_{G}(\mathbb{Z}_{p})u.
$$
By (B) of \S \ref{sec:branchlaw} the projection of $g$ to $L_{G}$ lands in $L_{G}^{0}$ and thus lies in the kernel of $k_{\mathcal{U}}^{G}$; this projection coincides with $\ell_{2}\ell_{1}^{-1}$, thus
$$
    k_{\mathcal{U}}^{G}(\ell_{1}) = k_{\mathcal{U}}^{G}(\ell_{2}).
$$
\end{proof}
\begin{definition}
Define 
\begin{align*}
   \xi: U_{\mathrm{sph}}(\mathbb{Z}_{p}) &\to \mathfrak{S}_{G}, \\
    \overline{n}\ell u q &\mapsto \ell \ \mathrm{mod} \ L_{G}^{0}(\mathbb{Z}_{p}),
\end{align*}
which is well-defined by the proof of Lemma \ref{lem:welf}.
\end{definition}

\begin{definition}
 Define an action of $A^{-}$ on $U_{\mathrm{Bru}}^{G}(\mathbb{Z}_{p})$ by 
$$
    a * \overline{n}\ell n := \overline{n}\ell a na^{-1}.
$$
\end{definition}
\begin{lemma} \label{lem:shrinkage}
For any $a \in A^{--}$ we have 
$$
    \left(a *U_{\mathrm{Bru}}^{G}(\mathbb{Z}_{p}) \right)u \subset U_{\mathrm{Sph}}(\mathbb{Z}_{p}).
$$
\end{lemma}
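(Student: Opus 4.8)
The plan is to combine two ingredients: (i) conjugation by an element of $A^{--}$ contracts $N_{G}(\mathbb{Z}_{p})$ into the principal congruence subgroup $N_{G}(p\mathbb{Z}_{p})$, and (ii) the spherical cell $U_{\mathrm{Sph}}=\bar{Q}_{G}uQ_{H}^{0}$ is Zariski open in $G$ \emph{over $\mathbb{Z}_{p}$} and contains $u$, so any $g\in G(\mathbb{Z}_{p})$ whose reduction modulo $p$ lies on $U_{\mathrm{Sph}}$ automatically lies in $U_{\mathrm{Sph}}(\mathbb{Z}_{p})$. Together these pin $a\,N_{G}(\mathbb{Z}_{p})\,a^{-1}\,u$ inside $U_{\mathrm{Sph}}(\mathbb{Z}_{p})$, and the remaining $\bar{Q}_{G}(\mathbb{Z}_{p})$-factor is absorbed because $U_{\mathrm{Sph}}$ is stable under left translation by the subgroup scheme $\bar{Q}_{G}$.

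For (i): since $A$ is central in $L_{G}$, each root $\beta\in\Phi_{L}$ is trivial on $A$, so every root $\alpha$ occurring in $\mathrm{Lie}\,N_{G}$ (the $\alpha\in\Phi_{G}^{+}\setminus\Phi_{L}$) restricts on $A$ to a nonnegative integral combination $\sum_{\beta\in\Delta_{G}\setminus\Delta_{L}}c_{\beta}\,\beta|_{A}$ with some $c_{\beta}\geq 1$; hence for $a\in A^{--}$ one has $v_{p}(\alpha(a))\geq 1$ for all such $\alpha$. Using the $A$-equivariant exponential isomorphism $\mathrm{Lie}\,N_{G}(\mathbb{Z}_{p})\xrightarrow{\ \sim\ }N_{G}(\mathbb{Z}_{p})$ and the decomposition of $\mathrm{Lie}\,N_{G}$ into $A$-weight spaces, on which $\mathrm{Ad}(a)$ acts by the scalars $\alpha(a)\in p\mathbb{Z}_{p}$, we get $\mathrm{Ad}(a)\bigl(\mathrm{Lie}\,N_{G}(\mathbb{Z}_{p})\bigr)\subseteq p\,\mathrm{Lie}\,N_{G}(\mathbb{Z}_{p})$, whence $a\,N_{G}(\mathbb{Z}_{p})\,a^{-1}\subseteq N_{G}(p\mathbb{Z}_{p})$.

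For the assembly: set $V=U_{\mathrm{Sph}}\,u^{-1}$, which is open in $G_{/\mathbb{Z}_{p}}$ (right translation by $u^{-1}\in G(\mathbb{Z}_{p})$ is an automorphism) and contains the identity section since $u\in U_{\mathrm{Sph}}(\mathbb{Z}_{p})$. If $n\in N_{G}(p\mathbb{Z}_{p})$ then $\bar n=1\in V(\mathbb{F}_{p})$, so the open subset $n^{-1}(V)$ of $\mathrm{Spec}\,\mathbb{Z}_{p}$ contains the closed point, hence equals all of $\mathrm{Spec}\,\mathbb{Z}_{p}$; thus $n\in V(\mathbb{Z}_{p})$, i.e. $nu\in U_{\mathrm{Sph}}(\mathbb{Z}_{p})$. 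Applying this to $n=a n_{0}a^{-1}$ with $n_{0}\in N_{G}(\mathbb{Z}_{p})$ and then using $\bar{Q}_{G}(\mathbb{Z}_{p})\cdot U_{\mathrm{Sph}}(\mathbb{Z}_{p})\subseteq U_{\mathrm{Sph}}(\mathbb{Z}_{p})$ (from left-translation stability of $U_{\mathrm{Sph}}$ by $\bar{Q}_{G}$) gives $\bar{Q}_{G}(\mathbb{Z}_{p})\,a N_{G}(\mathbb{Z}_{p})a^{-1}\,u\subseteq U_{\mathrm{Sph}}(\mathbb{Z}_{p})$, which is the claim.

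The step most in need of care is the integral openness used in (ii): I rely on $U_{\mathrm{Sph}}$ being open in $G$ as a $\mathbb{Z}_{p}$-scheme (not merely on the generic fibre) and on $u$ (hence $\bar u$) lying on it, which should be available from the structural results of \cite{loeffler2021spherical} built on hypotheses (A)--(B) and properness of $\mathcal{F}$ over $\mathbb{Z}_{p}$. Everything else is formal manipulation with the $\mathbb{Z}_{p}$-group structure of $G$ and the congruence filtration on $N_{G}$.
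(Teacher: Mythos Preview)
Your proof is correct and follows essentially the same approach as the paper: both arguments hinge on the contraction $aN_{G}(\mathbb{Z}_{p})a^{-1}\equiv 1\bmod p$ for $a\in A^{--}$, together with the Zariski openness of $U_{\mathrm{Sph}}$ over $\mathbb{Z}_{p}$ to pass from membership mod $p$ to membership over $\mathbb{Z}_{p}$. The only cosmetic difference is that the paper first passes to the flag variety $\mathcal{F}=\bar{Q}_{G}\backslash G$, which absorbs the $\bar{Q}_{G}(\mathbb{Z}_{p})$-factor for free, whereas you work directly in $G$ and handle that factor separately via left-$\bar{Q}_{G}$-stability of $U_{\mathrm{Sph}}$.
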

\begin{proof}
It suffices to check this on $\mathcal{F}_{G}(\mathbb{Z}_{p})$. Furthermore, as  $U_{\mathrm{Sph}}$ is Zariski open in the smooth projective variety $\mathcal{F}_{G}$, it suffices to show that the inclusion holds mod $p$. We compute
\begin{align*}
     \left(a *U_{\mathrm{Bru}}^{G}(\mathbb{Z}_{p}) \right)u &=  \overline{Q}_{G}(\mathbb{Z}_{p})a N_{G}(\mathbb{Z}_{p}) a^{-1}u \\
     &\equiv \overline{Q}_{G}(\mathbb{F}_{p})u \ &&\mathrm{mod} \ p \\
     & \in U_{\mathrm{Sph}}(\mathbb{F}_{p}),
\end{align*}
where the second equality follows from the fact that $a N_{G}(\mathbb{Z}_{p}) a^{-1} \equiv 1 \ \mathrm{mod} \ p$.
\end{proof}

\begin{definition} \label{def:sphericalvector}
Define 
$$
    f^{\mathrm{sph}}_{\mathcal{U}}: U_{\mathrm{Bru}}^{G}(\Zp) \to \Lambda_{\mathcal{U}}
$$
by extending the restriction of $\tilde{f}^{\mathrm{sph}}_{\mathcal{U}}$ to $(U_{\mathrm{sph}} \cap U_{\mathrm{Bru}}^{G})(\Zp)$ by zero. 
\end{definition}

\begin{lemma}
The function $f^{\mathrm{sph}}_{\mathcal{U}}$satisfies the following properties:
\begin{enumerate}
\item It is contained in $A^{\mathrm{an}}_{\mathcal{U}, m}$ whenever $\mathcal{U} \subset \mathcal{W}_{m}$
\item It is invariant under right translation by $Q_{H}^{0} \cap u^{-1}U_{n}u$.
\end{enumerate}
\end{lemma}
\begin{proof}
Part (1) follows from the fact that 
$$
    \tilde{f}^{\mathrm{sph}}_{\mathcal{U}} = k_{\mathcal{U}}^{G} \circ \xi 
$$
is the composition of an algebraic map with an $m$-analytic map. For part (2) we note that the homogeneity property of $f_{\mathcal{U}}^{\mathrm{sph}}$ and the Iwahori decomposition of $J_{G}$ means that for any $g \in U_{\mathrm{Bru}}^{G}(\Zp)$ there is $j \in J_{G}$ such that $f_{\mathcal{U}}^{\mathrm{sph}}(g) = f_{\mathcal{U}}^{\mathrm{sph}}(j)$, so it suffices to show that the restriction of $f_{\mathcal{U}}^{\mathrm{sph}}$ to $J_{G}$ is invariant under $X := Q_{H}^{0}(\Zp) \cap u^{-1}U_{n}u$. Since $X \subset Q_{H}^{0} \cap J_{G}$ (recalling that we are assuming $u \in N_{G}(\Zp)$) then right translation by $x \in X$ preserves both $U_{\mathrm{sph}}(\Zp)$ and $J_{G}$ and thus $U_{\mathrm{sph}}(\Zp) \cap J_{G}$ and $J_{G} \backslash U_{\mathrm{sph}}(\Zp) \cap J_{G}$. Let $g \in U^{G}_{\mathrm{Bru}}(\Zp)$, then if $g \in U_{\mathrm{sph}}(\Zp) \cap J_{G}$ there is $\overline{n} \in \overline{N}_{G}(\Zp), \ell \in L_{G}(\Zp)$ and $q \in Q_{H}^{0}(\Zp)$ such that $g = \overline{n}\ell u q$ and for $x \in X$ we have $f_{\mathcal{U}}^{\mathrm{sph}}(gx) = f_{\mathcal{U}}^{\mathrm{sph}}(\overline{n}\ell uqx) = k_{\mathcal{U}}^{G}(\ell)$ since $x \in Q_{H}^{0}(\Zp)$. On the other hand, if $g \notin U_{\mathrm{sph}}(\Zp) \cap J_{G}$ then $xg \notin U_{\mathrm{sph}}(\Zp) \cap J_{G}$ and thus $f_{\mathcal{U}}^{\mathrm{sph}}(gx) = f_{\mathcal{U}}^{\mathrm{sph}}(g) = 0$ and we are done. 
\end{proof}

\subsection{Analytic branching maps}

Let $\mathcal{U} \subset \mathcal{W}_{m}$ be a wide-open disc, let $\lambda \in X^{\bullet}_{+}(S_{G}/S_{G}^{0})^{Q_{H}^{0}} \cap \mathcal{U}$ and let $\mu \in X^{\bullet}_{+}(S_{H}/S_{H}^{0})$ be the associated weight for $H$. We construct maps  
$$
    H^{i}_{\mathrm{Iw}}(Y_{H}(Q_{H}^{0} \cap u^{-1}U_{n}u), \mathcal{O}) \to H^{i + c}(Y_{G}(V_{n}), A^{\mathrm{an}}_{\mathcal{U}, m})
$$
interpolating the maps 
$$
    H^{i}(Y_{H}(Q_{H}^{0} \cap u^{-1}U_{n}u), \mathcal{O}) \to H^{i + c}(Y_{G}(V_{n}), A_{\lambda, m}^{\mathrm{an}})
$$
induced by the branching map 
$$
    \left(\mathcal{P}^{H}_{\mu}\right)^{\vee} \to V_{\lambda}^{G} \hookrightarrow A_{\lambda, m}^{\mathrm{an}}
$$

The element $f_{\mathcal{U}}^{\mathrm{sph}} \in A^{\mathrm{an}}_{\mathcal{U}, m}$ is invariant under the action of $Q_{H}^{0} \cap u^{-1}U_{n}u$, and thus defines an element of $H^{0}_{\mathrm{Iw}}(Y_{G}(Q_{H}^{0} \cap u^{-1}U_{n}u), A^{\mathrm{an}}_{\mathcal{U}, m})$. This allows us to define a map 
$$
    \mathrm{br}_{\mathcal{U}}: H^{i + c}_{\mathrm{Iw}}(Y_{G}(Q_{H}^{0} \cap u^{-1}U_{n}u), \mathcal{O}) \to H^{i + c}_{\mathrm{Iw}}(Y_{G}(Q_{H}^{0} \cap u^{-1}U_{n}u), A^{\mathrm{an}}_{\mathcal{U}, m})
$$
which can be described in the following way: let $Q_{H}^{0}(p^{k})$ be the points of $G(\Zp)$ which reduce to $Q_{H}^{0}(\Z/p^{k}\Z)$ mod $p^{k}$ so that $Q_{H}^{0}(p^{k}) \cap u^{-1}U_{n}u$ define a cofinal system of neighbourhoods of $Q_{H}^{0} \cap u^{-1}U_{n}u$. Writing $f_{\mathcal{U}, k}^{\mathrm{sph}}$ for the mod $p^{k}$ reduction of $f_{\mathcal{U}}^{\mathrm{sph}}$ we note that $f^{\mathrm{sph}}_{\mathcal{U}, k}$ is invariant under $Q_{H}^{0}(p^{k}) \cap u^{-1}U_{n}u$ and thus defines an element of $H^{0}(Y_{G}(Q_{H}^{0}(p^{k}) \cap u^{-1}U_{n}u), A^{\mathrm{an}}_{\mathcal{U}, m}/p^{k})$. The above map is given by taking the limit over the maps 
$$
    H^{i + c}(Y_{G}(Q_{H}^{0}(p^{k}) \cap u^{-1}U_{n}u), \mathcal{O}/p^{k}) \to H^{i + c}(Y_{G}(Q_{H}^{0}(p^{k}) \cap u^{-1}U_{n}u), A^{\mathrm{an}}_{\mathcal{U}, m}/p^{k})
$$
given by taking the cup product with $f^{\mathrm{sph}}_{\mathcal{U}, k}$.  Repeating this construction with $f_{\lambda}^{\mathrm{sph}}$for $\lambda \in X^{\bullet}_{+}(S_{G}/S_{G}^{0})^{Q_{H}^{0}}$ in place of $f_{\mathcal{U}}^{\mathrm{sph}}$ gives a map 
$$
    \mathrm{br}_{\lambda}: H^{i + c}_{\mathrm{Iw}}(Y_{G}(Q_{H}^{0} \cap u^{-1}U_{n}u), \mathcal{O}) \to H^{i + c}_{\mathrm{Iw}}(Y_{G}(Q_{H}^{0} \cap u^{-1}U_{n}u), A^{\mathrm{an}}_{\lambda, m}) 
$$
Classes pushed forward from $Y_{H}$ via the above maps do not interpolate algebraic classes; we will need to twist by $\tau^{n}u$. This is precisely the twist used in \cite{loefflerspherical} to give classes satisfying norm relations up to an application of $U'_{p}$. 
\begin{definition}
For $k \geq n \geq 1$ define open compact subgroups 
$$
    K_{k,n}^{G} := \{g \in G(\Zp): \tau^{n}u^{-1}gu\tau^{-n} \in Q_{H}^{0} \cap u^{-1}U_{n}u \ \mathrm{mod} \ p^{k}\} \subset V_{n},
$$
and write $K_{\infty, n}^{G} := \{g \in G(\Zp): \tau^{n}u^{-1}gu\tau^{-n} \in Q_{H}^{0} \cap u^{-1}U_{n}u \}$. 
\end{definition}
\begin{lemma}
Let $i \geq 0$, $\mathcal{U} \subset \mathcal{W}_{m}$ be a wide-open disc and let $\lambda \in  X^{\bullet}_{+}(S_{G}/S_{G}^{0})^{Q_{H}^{0}}$ be such that $\lambda\vert_{\mathfrak{S}_{G}} \in \mathcal{U}$. The diagram
$$
\begin{tikzcd}[column sep=huge]
    H^{i}_{\mathrm{Iw}}(Y_{G}(Q_{H}^{0} \cap u^{-1}U_{n}u), \mathcal{O}) \arrow[r, "\tau^{n}u \circ \mathrm{br}_{\mathcal{U}} \circ \iota_{*}"] \arrow[dr, "\tau^{n}u \circ \mathrm{br}_{\lambda} \circ \iota_{*}"] & H^{i}_{\mathrm{Iw}}(Y_{G}(K^{G}_{\infty, n}), A^{\mathrm{an}}_{\mathcal{U}, m}) \arrow[d, "\rho_{\lambda}"] \\
    & H^{i}_{\mathrm{Iw}}(Y_{G}(K^{G}_{\infty, n}), A^{\mathrm{an}}_{\lambda, m})
\end{tikzcd}
$$
commutes, where the unlabelled arrows are those constructed in the discussion above and we recall that $\rho_{\lambda}: A^{\mathrm{an}}_{\mathcal{U}, m} \to A^{\mathrm{an}}_{\lambda, m}$ is the map given by composition with $\lambda$.
\end{lemma}
\begin{proof}
This reduces to showing that the functions $\rho_{\lambda}(\tau^{n}uf^{\mathrm{sph}}_{\mathcal{U}}) = \tau^{n}u\rho_{\lambda}(f^{\mathrm{sph}}_{\mathcal{U}})$ and $\tau^{n}uf^{\mathrm{sph}}_{\lambda}$ are equal as elements of $A^{\mathrm{an}}_{\lambda, m}$. By construction they are equal upon restriction to $U_{\mathrm{sph}}(\Zp) \cap U_{\mathrm{Bru}}^{G}(\Zp)$ and since for $\overline{n}\ell n \in U_{\mathrm{Bru}}^{G}(\Zp)$ we have $\overline{n}\ell\tau^{n}n\tau^{-n}u \in U_{\mathrm{sph}}(\Zp)$ by Lemma \ref{lem:shrinkage} then 
$$
    \tau^{n}u\rho_{\lambda}\left(f^{\mathrm{sph}}_{\mathcal{U}}\right)(\overline{n}\ell n) = \rho_{\lambda}\left(f^{\mathrm{sph}}_{\mathcal{U}}\right)(\overline{n}\ell \tau^{n} n \tau^{-n}u) = f_{\lambda}^{\mathrm{sph}}(\overline{n}\ell\tau^{n}n\tau^{-n}u) = \tau^{n}uf^{\mathrm{sph}}_{\lambda}(\overline{n}\ell n)
$$
as required. 
\end{proof}

\begin{lemma}
Let $\lambda \in X_{+}^{\bullet}(S_{G})^{Q_{H}^{0}}$ and let $\mu$ be the associated $H$-weight. The diagram 
$$
\begin{tikzcd}
    H^{i}_{\mathrm{Iw}}(Y_{H}(Q_{H}^{0} \cap u^{-1}U_{n}u), \mathcal{O}) \arrow[r, "\iota_{*}"] \arrow[d, "\mathrm{mom}_{\mu}"] &  H^{i + c}_{\mathrm{Iw}}(Y_{G}(Q_{H}^{0} \cap u^{-1}U_{n}u), \mathcal{O}) \arrow[d, ""] \\
     H^{i}_{\mathrm{Iw}}(Y_{H}(Q_{H}^{0} \cap u^{-1}U_{n}u), V_{\mu, \mathcal{O}}^{H}) \arrow[r, "\iota_{*} \circ \mathrm{br}_{\lambda}"] & H^{i + c}_{\mathrm{Iw}}(Y_{G}(Q_{H}^{0} \cap u^{-1}U_{n}u), A_{\lambda, m}^{\mathrm{an}})
\end{tikzcd}
$$
commutes, where $\mathrm{mom}_{\mu}$ is the map constructed in \cite[Definition 5.1.1]{loeffler2021spherical} given by taking the limit over cup products with the mod $p^{k}$ highest weight vector of $V_{\mu, \mathcal{O}}^{H}$.
\end{lemma}
\begin{proof}
Modulo $p^{k}$ we have, for $z \in  H^{i}(Y_{H}(\iota^{-1}(Q_{H}^{0}(p^{k}) \cap u^{-1}U_{n}u)), \mathcal{O}/p^{k})$
\begin{align*}
   \iota_{*} \circ \mathrm{br_{\lambda}}\left( \mathrm{mom}_{\mu}\left(z\right)\right) &= (\iota_{*} \circ \mathrm{br}_{\lambda})(z \cup f^{H}_{\mu, k}) \\
   & = \iota_{*}\left(z \cup \iota^{*}f^{\mathrm{sph}}_{\lambda, k}\right) \\
   &= \iota_{*}(z) \cup f^{\mathrm{sph}}_{\lambda, k},
\end{align*}
and taking the limit over $k$ gives us the result. 
\end{proof}
Putting this all together we obtain pushforward maps 
$$
     \iota_{\mathcal{U},n, *}  := \mathrm{pr}^{K^{G}_{\infty, n}}_{V_{n}} \circ 
 \tau^{n}_{*} \circ u_{*} \circ \mathrm{br}_{\mathcal{U}} \circ  \iota_{*} : H^{i}_{\mathrm{Iw}}(Y_{H}(Q_{H}^{0} \cap u^{-1}U_{n}u), \mathcal{O}) \to  H^{i + c}(Y_{G}(V_{n}), A^{\mathrm{an}}_{\mathcal{U}, m}) 
$$
interpolating the maps 
$$
     \iota_{\lambda, *} = \mathrm{pr}^{K^{G}_{\infty, n}}_{V_{n}} \circ 
 \tau^{n}_{*} \circ u_{*} \circ \mathrm{br}_{\lambda} \circ  \iota_{*}: H^{i}_{\mathrm{Iw}}(Y_{H}(Q_{H}^{0} \cap u^{-1}U_{n}u), V^{H}_{\mu_{ \lambda}, \mathcal{O}}) \to  H^{i + c}(Y_{G}(V_{n}), A^{\mathrm{an}}_{\lambda, m})
$$
for all $\lambda \in X^{\bullet}_{+}(S_{G})^{Q_{H}^{0}} \cap \mathcal{U}$. Note that we have dropped the Iw subscript denoting Iwasawa cohomology since Iwasawa cohomology of an open-compact subgroup is just the cohomology of that subgroup.

By replacing the local system $A^{\mathrm{an}}_{\mathcal{U}, m}$ with the lisse \'etale sheaves $\mathscr{A}^{\mathrm{Iw}}_{\mathcal{U}, m}$, the construction of the previous section goes through in \'etale cohomology: Let $\Sigma$ be as in \S \ref{sec:coho}, then, making sure to keep track of twists, we obtain pushforward maps 
$$
     \iota_{\mathcal{U},n, *}: H^{i}_{\mathrm{Iw}, \acute{et}}(Y_{H}(Q_{H}^{0} \cap u^{-1}U_{n}u)_{\Sigma}, \mathcal{O}) \to  H^{i + c}(Y_{G}(V_{n})_{\Sigma}, \mathscr{A}^{\mathrm{Iw}}_{\mathcal{U}, m}(c)) 
$$
interpolating the maps 
$$
     \iota_{\lambda, *}: H^{i}_{\mathrm{Iw}, \acute{e}t}(Y_{H}(Q_{H}^{0} \cap u^{-1}U_{n}u)_{\Sigma}, V^{H}_{\mu_{\lambda}, \mathcal{O}}) \to  H^{i + c}(Y_{G}(V_{n})_{\Sigma}, \mathscr{A}^{\mathrm{Iw}}_{\lambda, m}(c))
$$
for all $\lambda \in X^{\bullet}_{+}(S_{G})^{Q_{H}^{0}} \cap \mathcal{U}$. Similarly, the construction works for absolute \'etale cohomology although we won't use it in this article. In practice we shall suppress the \textit{\'et} subscript in Iwasawa \'etale cohomology as it will be obvious from context (or won't matter) which we are using.

 These pushforward maps fit into the framework of \cite{loefflerspherical} and so as a direct corollary to \cite[Proposition 4.5.2]{loefflerspherical} we obtain the following norm compatibility.
\begin{theorem} \label{thm:normrelation}
Suppose we have classes $z^{H}_{n} \in H^{i}_{\mathrm{Iw}}(Y_{H}(Q_{H}^{0} \cap u^{-1}U_{n}u), \mathcal{O})$ for all $n \geq 1$ satisfying the compatibility relation 
$$
    \mathrm{pr}^{Q_{H}^{0} \cap u^{-1}U_{n + 1}u}_{Q_{H}^{0} \cap u^{-1}U_{n}u, *}(z_{n + 1}^{H}) = z_{n}^{H}.
$$
Define 
$$
    \xi^{G}_{\mathcal{U}, n} := \iota_{\mathcal{U}, n, *}\left(z_{n}^{H}\right) \in H^{i + c}(Y_{G}(V_{n}), A^{\mathrm{an}}_{\mathcal{U}, m}).
$$
These classes satisfy the norm-relation
$$
    \left(\mathrm{pr}^{n + 1}_{n}\right)_{*}\left(\xi^{G}_{\mathcal{U}, n + 1}\right) = U'_{p}\cdot \xi^{G}_{\mathcal{U}, n}.
$$
\end{theorem}

\section{Complexes of Banach modules} \label{sec:complex}
\subsection{Slope decompositions}
We recall the theory of slope decompositions for compact operators on non-archimedean Banach modules, our main references being \cite{Ashsteve} and \cite{urbaneigen}. Let $K/\Qp$ be a complete field extension and let $R$ be a Banach $K$-algebra
\begin{definition}
 Consider $F \in R\{\{T\}\}$ and $h \in \mathbb{R}_{\geq 0}$. We say that $F$ has a slope $\leq h$ factorisation if we have a factorisation
$$
    F = Q \cdot S
$$
where $Q$ is a polynomial and $S$ satisfies $S(0) = 1$ (is \textit{Fredholm} in the language of \cite{Ashsteve}), such that 
\begin{enumerate}
    \item Every slope of $Q$ is $\leq h$
    \item $S$ has slope $> h$
    \item $p^{h}$ is in the interval of convergence of $S$. 
\end{enumerate}
Such a factorisation is unique if it exists. For a polynomial $P(t) \in R[t]$ define $P^{*}(t) = t^{\mathrm{deg}(P)}P(1/t)$. 
\end{definition}
\begin{definition} \label{def:slope}
Let $M$ be an $R$-module equipped with an $R$-linear endomorphism $u$. For $h \in \mathbb{Q}$ we say that $M$ has a \textit{$\leq h$-slope decomposition} if it decomposes as a direct sum 
$$
    M = M^{u \leq h} \oplus M^{u > h}
$$
such that
\begin{itemize}
    \item Both summands are $u$-stable.
    \item $M^{u \leq h}$ is finitely generated over $R$.
    \item For every $m \in M^{u \leq h}$ there is a polynomial $Q \in R[t]$ of slope $\leq h$  with $Q^{*}(0)$ a multiplicative unit, such that $Q^{*}(u)m = 0$.
    \item For any polynomial $Q \in R[t]$ of slope $\leq h$ with $Q^{*}(0)$ a multiplicative unit, the map 
    $$
    Q^{*}(u): M^{u > h} \to M^{u > h}
    $$
    is an isomorphism. 
\end{itemize}
\end{definition}
If such a decomposition exists it is unique and $u$ acts invertibly on $M^{u \leq h}$. 
Let $M$ be a projective Banach $R$-module with an action of $\mathfrak{U}_{p}^{--}$ by compact operators. For $u \in \mathfrak{U}_{p}^{--}$ let $F_{u} \in R\{\{t\}\}$ denote the Fredholm determinant of $u$ acting on $M$. The following theorem \cite[Theorem 2.3.8]{urbaneigen} follows directly from results of Coleman, Serre and Buzzard: 

\begin{theorem} \label{thm:1}
Let $R, M$ be as above and suppose that $M$ is equipped with an $R$-linear compact operator $u$. If we have a prime decomposition $F_{u}(T) = Q(T)S(T)$ in $R\{\{T\}\}$ with $Q$ a polynomial such that $Q(0) = 1$ and $Q^{*}(0) \in R^{\times}$ then there exists $R_{Q}(T) \in TR\{\{T\}\}$ whose coefficients are polynomials in the coefficients of $Q$ and $S$ and we have a decomposition of $M$:
$$
    M = N_{u}(Q) \oplus F_{u}(Q)
$$
into closed $R$ submodules satisfying
\begin{itemize}
    \item The projector on $N_{u}(Q)$ is given by $R_{Q}(u)$. 
    \item $Q^{*}(u)$ annihilates $N_{u}(Q)$.
    \item $Q^{*}(u)$ is invertible on $F_{u}(Q)$. 
\end{itemize}
If $R$ is Noetherian then $N_{u}(Q)$ is projective of finite rank and 
$$
    \det(1 - tu \mid N_{u}(Q)) = Q(t).
$$
\end{theorem}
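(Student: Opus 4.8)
The plan is to deduce the statement from Serre's spectral theory of a single completely continuous operator over a field, from Coleman's algebraic theory of Fredholm series, and from Buzzard's extension of both to a Banach-algebra base; the assembly is essentially formal once those inputs are in place. What I take from that literature is: since $M$ is orthonormalisable and $u$ is compact, $F_u(T)=\det(1-Tu\mid M)\in R\{\{T\}\}$ is a well-defined entire series with $F_u(0)=1$; there is a canonical Fredholm adjugate $\Delta_u(T)$, an $\mathrm{End}_R(M)$-valued entire series whose coefficients are universal polynomial expressions in $u$ and the coefficients of $F_u$, satisfying $(1-Tu)\Delta_u(T)=\Delta_u(T)(1-Tu)=F_u(T)\cdot\mathrm{id}_M$; and all of $F_u$, $\Delta_u$ and the formation of slope decompositions are compatible with any continuous base change $R\to R'$, in particular with passage to the residue fields and the fraction fields of the local rings of $R$.

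First I would produce the projector. A prime decomposition $F_u=Q\cdot S$ with $Q(0)=1$, with $Q$ of unit leading coefficient, and with $Q,S$ coprime forces $\mathrm{Res}(Q,S)\in R^{\times}$, so that there is a B\'ezout identity $\alpha(T)Q(T)+\beta(T)S(T)=1$ in $R\{\{T\}\}$. Combining this with the adjugate identity, Coleman's construction produces an entire series $R_Q(T)\in TR\{\{T\}\}$, with coefficients polynomial in those of $Q$ and $S$ (the construction also uses the unit $\mathrm{Res}(Q,S)^{-1}$, available by the primality hypothesis), whose defining feature is that $e:=R_Q(u)\in\mathrm{End}_R(M)$ is idempotent — equivalently, $R_Q$ is congruent to $1$ modulo $Q$ and to $0$ modulo $S$ in $R\{\{T\}\}$. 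One verifies idempotence either directly from $(1-Tu)\Delta_u(T)=F_u(T)$ and $F_u=QS$, or, more cheaply, after base change to the fraction fields of the local rings of $R$, where it is Serre's statement. Set $N_u(Q):=eM$ and $F_u(Q):=(1-e)M$; since $e$ and $1-e$ are bounded operators, both are closed $R$-submodules and $M=N_u(Q)\oplus F_u(Q)$, with projector $R_Q(u)$ onto the first factor.

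Next I would check the remaining assertions. That $Q^{*}(u)$ annihilates $N_u(Q)$ and acts invertibly on $F_u(Q)$ is formal from $F_u=QS$, the adjugate identity and the construction of $e$: on $N_u(Q)$ the operator $u$ satisfies the reciprocal polynomial $Q^{*}$, while on $F_u(Q)$ the complementary factor $S$ governs $u$ and coprimality makes $Q^{*}(u)$ a unit; as before one can reduce each identity to Serre's decomposition over the residue and fraction fields of the local rings of $R$, using the base-change compatibility of $F_u$, $\Delta_u$ and $e$. For the last part, assume $R$ (the ring written $A$ in the statement) is Noetherian. Then $N_u(Q)$, being a direct summand of the orthonormalisable module $M$ and annihilated by $Q^{*}(u)$ with $Q^{*}(0)$ a unit, is finitely generated over $R$ and, being a Banach direct summand, projective of finite rank. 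Multiplicativity of Fredholm determinants over $M=N_u(Q)\oplus F_u(Q)$ gives $F_u(T)=\det(1-Tu\mid N_u(Q))\cdot\det(1-Tu\mid F_u(Q))$; the first factor is the genuine degree-$(\deg Q)$ characteristic polynomial of $u|_{N_u(Q)}$, it is coprime to the second, and it equals $Q$ modulo every maximal ideal of $R$ by Serre, so uniqueness of the prime decomposition of $F_u$ forces $\det(1-Tu\mid N_u(Q))=Q(T)$.

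\textbf{Main obstacle.} The formal manipulations above are routine; the genuine content is the relative theory over $R$ — closedness of the two summands, the polynomial dependence of $R_Q$ on $(Q,S)$ and $\mathrm{Res}(Q,S)^{-1}$, and the finite-rank and characteristic-polynomial conclusion when $R$ is Noetherian but possibly non-reduced — and this is exactly what Coleman's and Buzzard's refinements of the Riesz theory supply. So the real work is to confirm that the hypotheses here (projectivity of $M$, a true prime factorisation of $F_u$, Noetherianness of $R$) are precisely those under which their results apply, and then to assemble them as above.
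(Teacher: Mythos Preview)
Your proposal is correct and matches the paper's approach: the paper does not give an independent proof but simply states that the theorem ``follows directly from results of Coleman, Serre and Buzzard,'' which is precisely the assembly you sketch. Your write-up is in fact more detailed than the paper's, which cites these sources without spelling out how the projector $R_Q$, the decomposition, and the Noetherian finite-rank conclusion are extracted from them.
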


When the decomposition $F_{u} = QS$ is a slope $\leq h$ factorisation then the decomposition in the above theorem is a slope $\leq h$ factorisation and $M^{u \leq h} = N_{u}(Q)$. In this case we write $e_{u}^{\leq h} := R_{Q}(u)$.

\subsection{Slope decompositions on cohomology}
Let $K = K_{p}K^{p} \subset \mathcal{G}(\mathbb{A}_{f})$ be a neat open compact subgroup with $K_{p} \subset J_{G}$ admitting an Iwahori decomposition in the sense that the product map 
$$
    \left(\overline{N}_{1} \cap K_{p}\right) \times \left(L_{G}(\Zp) \cap K_{p}\right) \times \left(N_{G}(\Zp) \cap K_{p}\right) \to K_{p}
$$
is a bijection, and let $R$ be a $\mathbb{Q}_{p}$ Banach algebra.
\begin{definition}
Let $\mathcal{T}_{K_{p}} \subset G(\Qp)$ be the monoid generated by $K_{p}$ and $A^{-}$. For an $R[\mathcal{T}_{K_{p}}]$-module $M$ let 
$$
    \mathscr{C}^{\bullet}(Y_{G}(K), M)
$$
be the `Borel--Serre' complex defined in \cite[Section 2.1]{hansen2017universal} whose cohomology computes $H^{\bullet}(Y_{G}(K), M)$ (as $R$-modules).
 We let $R\Gamma(Y_{G}(K), M)$ be the image of the above complex in the derived category of Banach $R$-modules. 
\end{definition}

Suppose $M$ is an orthonormalisable Banach $R$-module and with a continuous action of $A^{-}$. Then we can define an action of the Hecke algebra $\mathbb{T}_{S,p}^{-}$ on the complex $\mathscr{C}^{\bullet}(Y_{G}(K), M)$ via its interpretation as an algebra of double coset operators. Suppose further that $A^{--}$ acts compactly on $M$. Then the action of $\mathfrak{U}_{p}^{--}$ on $ \mathscr{C}^{\bullet}(Y_{G}(K), M)$ acts compactly on the total complex $\oplus_{i}  \mathscr{C}^{i}(Y_{G}(K), M)$. We refer to the following proposition from  \cite[2.3.3]{hansen2017universal}:
\begin{proposition} \label{prop:shrinkslope}
Let $R$ be an affinoid algebra. If $C^{\bullet}$ is a complex of projective Banach $R$-algebras equipped with an $R$-linear compact operator $u$, then for any $x \in \mathrm{Sp}(R)$ and $h \in \mathbb{Q}_{\geq 0}$ there is an affinoid subdomain  $\mathrm{Sp}(R') \subset \mathrm{Sp}(R)$ such that $x \in \mathrm{Sp}(R')$ and such that the complex $C^{\bullet} \hat{\otimes}_{R}R'$ admits a slope $\leq h$ decomposition for $u$ and $(C^{\bullet} \hat{\otimes}_{R} R')^{u \leq h}$ is a complex of finite flat $R'$-modules.
\end{proposition}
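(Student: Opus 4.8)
\textit{Proof sketch.} The plan is to reduce to the Riesz-theoretic input recorded in Theorem \ref{thm:1}, applied to the total complex, the only genuine work being to arrange a slope $\leq h$ factorization of the relevant Fredholm determinant over a sufficiently small affinoid neighbourhood of $x$.

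First I would pass to the (bounded) total complex $C := \bigoplus_i C^i$, which is projective as a Banach $R$-module and on which $u$ acts as a single $R$-linear compact operator (this compactness is exactly what was recalled just before the statement). Let $F_u \in R\{\{T\}\}$ be its Fredholm determinant. After base change to the residue field $\mathscr{H}(x)$ the series $F_{u}|_{x}$ is entire, so its Newton polygon has only finitely many segments of slope $\leq h$; classical Riesz theory over $\mathscr{H}(x)$ (as in \cite[\S 3.2]{urbaneigen}) then produces a slope $\leq h$ factorization $F_{u}|_{x} = Q_x\cdot S_x$ with $Q_x$ a polynomial, $Q_x^{*}(0)$ a unit, and $S_x$ Fredholm of slope $>h$ with $p^{h}$ in its interval of convergence.

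The crux — and the main obstacle — is to \emph{spread this out}: I would invoke the relative Riesz theory of Coleman and Buzzard, in the form used in \cite{hansen2017universal} and \cite{urbaneigen}, to produce an affinoid subdomain $\mathrm{Sp}(R')\subset\mathrm{Sp}(R)$ containing $x$ over which $F_u$ \emph{itself} factors as $F_{u}|_{R'} = Q\cdot S$, with $Q\in R'[T]$ of slope $\leq h$, $Q^{*}(0)\in (R')^{\times}$, and $S\in R'\{\{T\}\}$ Fredholm of slope $>h$ with $p^{h}$ still in its interval of convergence. The point is that the segments of slope $\leq h$ cut out a clopen part of the Fredholm hypersurface of $F_u$, finite over the base; shrinking $\mathrm{Sp}(R)$ so that this finite part is flat over the chosen affinoid is precisely what makes the factorization persist, and it is here that the affinoid hypothesis on $R$ (which is what permits the shrinking) is used.

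With the factorization $F_{u}|_{R'} = Q\cdot S$ in hand, apply Theorem \ref{thm:1} to $C\hat{\otimes}_R R'$: one obtains a decomposition $C\hat{\otimes}_R R' = N_u(Q)\oplus F_u(Q)$ into closed $R'$-submodules with projector $R_Q(u)$, and since $R'$ is affinoid, hence Noetherian, $N_u(Q)$ is projective of finite rank; by the remark following Theorem \ref{thm:1} this is the slope $\leq h$ decomposition, so $N_u(Q) = (C\hat{\otimes}_R R')^{u\leq h}$. It remains to note that this is a decomposition of \emph{complexes}: the projector $R_Q(u)$ is a limit of polynomials in $u$, and $u$, being a chain endomorphism, is degree-preserving and commutes with the differential, so passing to the limit $R_Q(u)$ is degree-preserving and commutes with the differential, whence $N_u(Q)$ and $F_u(Q)$ are graded subcomplexes. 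Finally each term $(C^{i}\hat{\otimes}_R R')^{u\leq h}$ is an $R'$-module direct summand of the finite projective module $N_u(Q)$, hence finite projective and in particular finite flat over $R'$, which is the assertion. \qed
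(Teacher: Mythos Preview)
Your sketch is correct and follows the standard Coleman--Buzzard--Serre route: factor the Fredholm determinant of $u$ on the total complex at the point $x$, spread the factorization out to a small affinoid neighbourhood using the relative Riesz theory, apply Theorem~\ref{thm:1}, and then check that the resulting projector respects the grading and the differentials. The only point worth tightening is the final step: you want each $(C^{i}\hat{\otimes}_R R')^{u\leq h}$ finite projective, and you deduce this by saying it is a summand of $N_u(Q)$. Strictly, $N_u(Q)$ is the slope $\leq h$ part of the \emph{total} module, and the degree-$i$ piece is a summand because the projector $R_Q(u)$ is degree-preserving; you have essentially said this, but it is worth making explicit that projectivity of a summand of a finite projective module is what is being used.

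That said, the paper does not actually prove this proposition at all: it is quoted verbatim from \cite[2.3.3]{hansen2017universal}, with the sentence ``We refer to the following proposition from \cite[2.3.3]{hansen2017universal}'' immediately preceding the statement. So there is no ``paper's own proof'' to compare against; your sketch is effectively a reconstruction of the argument in the cited reference, and it is a faithful one.
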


We will also need the following easy technical lemma.

\begin{lemma} \label{lem:shrinklem}
Let $N \subset M$ be an inclusion of projective Banach $R$-modules with $R$-linear compact operator $u$ such that 
$$
    uM \subset N
$$
Suppose further that $N, M$ admit slope $\leq h$ decompositions, then for $h \in \Q_{\geq 0}$ we have 
$$
    M^{u \leq h} = N^{u \leq h}.
$$
Moreover, if $e^{\leq h}_{u}$ is the slope $\leq h$ idempotent on $M$ associated to $u$ by Theorem \ref{thm:1} then 
$$
    e^{\leq h}_{u}N = N^{u\leq h}
$$
i.e. $e^{\leq h}_{u}$ is a slope $\leq h$ idempotent for $N$.
\end{lemma}
\begin{proof}
By \cite[Theorem 4.1.2(c)\&(d)]{Ashsteve} we have a slope $\leq h$ decomposition on $M/N$ such that 
$$
    0 \to N^{u \leq h} \to M^{u \leq h} \to (M/N)^{u \leq h} \to 0
$$
is an exact sequence. However, $u(M/N) = 0$ by our hypothesis so $u$ has infinite slope on $M/N$ and thus $(M/N)^{u \leq h} = 0$ for any $h \in \Q_{\geq 0}$ (as $u$ must be invertible on finite slope parts) whence $N^{u \leq h} = M^{u \leq h}$.

For the statement involving the idempotent it's easy to see that for any idempotent operator $\phi$ on $M$ preserving $N$ we have $\phi(N) = \phi(M)$ and that $e^{\leq h}_{u}$ preserves $N$ is immediate since $$
    e^{\leq h}_{u}N \subset M^{u \leq h} = N^{u \leq h}. 
$$
\end{proof}

By Proposition \ref{prop:shrinkslope} we can shrink $\mathcal{V}$ to an affinoid $\mathcal{V}'$ also containing $x_{0}$ and such that the complex admits a slope $\leq h$ decomposition for any $u \in \mathfrak{U}^{--}_{p}, h \in \mathbb{Q}_{\geq 0}$ with projector $e_{u}^{\leq h} \in \mathcal{O}(\mathcal{V}')\{\{u\}\}$. In this case by Lemma \ref{lem:shrinklem} we have
$$
     \mathscr{C}^{\bullet}(Y_{G}(K), A^{\mathrm{an}}_{\mathcal{V}', m})^{u \leq h} \cong \mathscr{C}^{\bullet}(Y_{G}(K), A^{\mathrm{an}}_{\mathcal{V}', m + 1})^{u \leq h}.
$$
We will need slope decompositions for coefficents defined over a wide-open disc in weight space. 

\begin{lemma}
Let $\mathcal{U} \subset \mathcal{W}_{G}$ be a wide open disc and let $x_{0} \in \mathcal{U}$. There is a wide open disc $x_{0} \in \mathcal{U}' \subset \mathcal{U}$ such that the complex $  \mathscr{C}^{\bullet}(Y_{G}(K), A^{\mathrm{an}}_{\mathcal{U}', m})$ admits a slope $\leq h$ decomposition.
\end{lemma}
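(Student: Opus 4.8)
The plan is to reduce to the affinoid case treated above and then extend scalars to a wide open subdisc (the statement being of course read with $A^{\mathrm{an}}_{\mathcal{U}',m}$ in place of $A^{\mathrm{an}}_{\mathcal{U},m}$). First I would fix an affinoid closed disc $\mathcal{V}$ with $x_0\in\mathcal{V}\subset\mathcal{U}$. By \eqref{eq:anal} every $u\in\mathfrak{U}_p^{--}$ acts compactly on $A^{\mathrm{an}}_{\mathcal{V},m}$, so $\mathscr{C}^\bullet(\bar{Y}_G(K),A^{\mathrm{an}}_{\mathcal{V},m})[1/p]$ is a complex of projective (indeed orthonormalisable) Banach $\mathcal{O}(\mathcal{V})$-modules carrying an $\mathcal{O}(\mathcal{V})$-linear compact operator $u$, and Proposition \ref{prop:shrinkslope} produces an affinoid subdomain $x_0\in\mathcal{V}'\subset\mathcal{V}$ over which this complex admits a slope $\leq h$ decomposition whose $\leq h$ part is a complex of finite flat $\mathcal{O}(\mathcal{V}')$-modules; shrinking, we may take $\mathcal{V}'$ to be a closed disc centred at $x_0$, the decomposition persisting by base change. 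By Theorem \ref{thm:1} the associated idempotent is $e^{\leq h}=R_Q(u)$ for the slope $\leq h$ factorisation $F_u=Q\cdot S$ of the Fredholm determinant of $u$; since the basis $\{e_i\}$ is defined already over $\Lambda_G(\mathcal{U})$ we have $F_u\in\Lambda_G(\mathcal{U})\{\{T\}\}$, so the formation of $F_u$, and hence of $Q$, $S$ and $R_Q$, commutes with the base changes used below. Moreover, just as in the proof of the preceding lemma---after replacing $m$ by $m+1$ and using the contraction \eqref{eq:anal}, which is harmless by the $m$-independence of the slope $\leq h$ part---the idempotent $R_Q(u)$ preserves the integral complex $\mathscr{C}^\bullet(\bar{Y}_G(K),A^{\mathrm{an}}_{\mathcal{V}',m})$.

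Now I would choose a wide open disc $\mathcal{U}'$ with $x_0\in\mathcal{U}'\subset\mathcal{V}'$. Compatibility of the universal characters under restriction identifies $A^{\mathrm{an}}_{\mathcal{U}',m}$ with $A^{\mathrm{an}}_{\mathcal{V}',m}\,\hat{\otimes}_{\mathcal{O}(\mathcal{V}')^\circ}\Lambda_G(\mathcal{U}')$, and since each term of the Borel--Serre complex is a finite direct sum of copies of its coefficient module this gives an isomorphism $\mathscr{C}^\bullet(\bar{Y}_G(K),A^{\mathrm{an}}_{\mathcal{U}',m})\cong\mathscr{C}^\bullet(\bar{Y}_G(K),A^{\mathrm{an}}_{\mathcal{V}',m})\,\hat{\otimes}_{\mathcal{O}(\mathcal{V}')^\circ}\Lambda_G(\mathcal{U}')$ compatible with the $u$-action. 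Base-changing the idempotent $R_Q(u)$ of the previous paragraph along $\mathcal{O}(\mathcal{V}')^\circ\to\Lambda_G(\mathcal{U}')$ then yields a $u$-stable direct sum decomposition $\mathscr{C}^\bullet(\bar{Y}_G(K),A^{\mathrm{an}}_{\mathcal{U}',m})=\mathscr{C}^{\bullet,\,u\leq h}\oplus\mathscr{C}^{\bullet,\,u>h}$, and this is a slope $\leq h$ decomposition: the $\leq h$ summand is the base change of the finite $\mathcal{O}(\mathcal{V}')$-complex $\mathscr{C}^\bullet(\bar{Y}_G(K),A^{\mathrm{an}}_{\mathcal{V}',m})^{u\leq h}$, hence finitely generated over the Noetherian ring $\Lambda_G(\mathcal{U}')$, while the annihilation of $\mathscr{C}^{\bullet,\,u\leq h}$ by $Q^*(u)$ and the invertibility of $Q^*(u)$ on $\mathscr{C}^{\bullet,\,u>h}$ both descend from the corresponding statements over $\mathcal{V}'$ in Theorem \ref{thm:1} by base change.

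The step I expect to be the main obstacle is everything packaged into the phrases ``just as in the proof of the preceding lemma'' and ``commutes with the base changes used below'': one has to verify that $R_Q(u)$ really does act as---and is an idempotent of---the integral complexes $\mathscr{C}^\bullet(\bar{Y}_G(K),A^{\mathrm{an}}_{\mathcal{V}',m})$ (not merely of their localisations at $p$), and that the formation of $F_u$, of $R_Q(u)$ and of the Borel--Serre complex commutes with the completed base change $(-)\,\hat{\otimes}_{\mathcal{O}(\mathcal{V}')^\circ}\Lambda_G(\mathcal{U}')$ along a map whose target is not an affinoid algebra. Both points rest on the contraction \eqref{eq:anal}, $aA^{\mathrm{an}}_{\kappa,m+1}\subset A^{\mathrm{an}}_{\kappa,m}$ for $a\in A^{--}$, together with the $m$-independence of the finite slope part established earlier: after the shift $m\mapsto m+1$ the bounded denominators occurring in the coefficients of $R_Q$ are absorbed, which makes the decomposition integral, and the completed tensor product can then be computed termwise against the finite free Borel--Serre modules, so that the non-affinoid (Fr\'echet--Stein) nature of $\Lambda_G(\mathcal{U}')$ causes no trouble.
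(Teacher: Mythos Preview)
Your overall strategy---reduce to the affinoid case via Proposition~\ref{prop:shrinkslope} and then transfer to a wide-open subdisc---matches the paper's, but you take an unnecessarily complicated detour through integrality of the projector and completed base change along $\mathcal{O}(\mathcal{V}')^\circ \to \Lambda_G(\mathcal{U}')$. The paper's proof is shorter and avoids exactly the difficulties you flag in your third paragraph.

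The key simplification is the choice of $\mathcal{U}'$: rather than taking $\mathcal{U}'$ to be some wide-open disc strictly inside $\mathcal{V}'$, the paper takes $\mathcal{U}'$ to be the \emph{interior} of the closed disc $\mathcal{V}$ (the wide-open disc of the same radius). Then the Banach norm on $\Lambda_G(\mathcal{U}')[1/p]$ is the Gauss norm, which restricts to the Gauss norm on $\mathcal{O}(\mathcal{V})$, and an orthonormal basis of $A^{\mathrm{an}}_{\mathcal{V},m}$ is simultaneously a basis of $A^{\mathrm{an}}_{\mathcal{U}',m}$ in the sense discussed just before the definition of $A^{\mathrm{an}}_{\mathcal{V},m}$. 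Consequently the matrix of $u$, its Fredholm determinant $F_u$, the slope $\leq h$ factorisation $F_u = Q S$, and the projector $e^{\leq h} \in \mathcal{O}(\mathcal{V})\{\{u\}\}$ are literally unchanged when passing from $\mathcal{V}$ to $\mathcal{U}'$, and the slope decomposition transfers immediately. No integrality of $R_Q(u)$ on the integral complex is needed, and there are no completed tensor products over non-affinoid bases to worry about.

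Your route can probably be made to work, but the justification you give for $R_Q(u)$ preserving the integral complex $\mathscr{C}^\bullet(\bar{Y}_G(K),A^{\mathrm{an}}_{\mathcal{V}',m})$ is only a sketch: the preceding lemma concerns the Iwasawa modules $A^{\mathrm{Iw}}$, not $A^{\mathrm{an}}$, and the passage via the contraction~\eqref{eq:anal} does not obviously bound the denominators in the coefficients of $R_Q$. The paper's isometry argument sidesteps this entirely.
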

\begin{proof}
As explained above, there is an affinoid $\mathcal{V} \subset \mathcal{U}$ containing $x_{0}$ over which $ \mathscr{C}^{\bullet}(Y_{G}(K), A^{\mathrm{an}}_{\mathcal{V}, m})$ admits a slope $\leq h$ decomposition. By shrinking we can assume that $\mathcal{V}$ is a closed disc centered on $x_{0}$. Let $\mathcal{U}'$ be the wide-open disc given by taking the `interior' wide-open disc of this affinoid disc. Since an orthonormal basis of $A^{\mathrm{an}}_{\mathcal{V}, m}$ gives an orthonormal basis of $A^{\mathrm{an}}_{\mathcal{U}', m}$ and the Banach norm on $\Lambda_{G}(\mathcal{U}')[1/p]$  restricts to the Gauss norm on $\mathcal{O}(\mathcal{V})$ we get a slope $\leq h$ decomposition on $\mathscr{C}^{\bullet}(Y_{G}(K), A^{\mathrm{an}}_{\mathcal{U}', m})$ and all the above results hold in this case. Note in particular that the projector $e_{u}^{\leq h}$ is still in $\mathcal{O}(\mathcal{V})\{\{u\}\}$ when computing the decomposition for $\mathcal{U}'$.
\end{proof}
There is a comparison isomorphism from \'etale to Betti cohomology. In particular, for any $i\geq 0$ and any profinite $\Zp[K]$-module $M$ we have a canonical isomorphism 
$$
    j^{(i)}_{M}: H^{i}(\overline{Y}_{G}(K_{p}), \mathscr{M}) \cong H^{i}(Y_{G}(K_{p}), M),
$$
one can take the perspective that we are endowing the Betti cohomology group $H^{i}(Y_{G}(K_{p}), M)$ with a Galois action. 
\begin{lemma} \label{lem:crush}
Let $h \in \Q_{\geq 0}$ and let $\mathcal{U}$ be a wide-open disc such that $\mathscr{C}^{\bullet}(Y_{G}(K), A^{\mathrm{an}}_{\mathcal{U}, m})$ admits a slope $\leq h$ decomposition. Then the slope $\leq h$ total cohomology $H^{\bullet}(Y_{G}(K), A^{\mathrm{an}}_{\mathcal{U}, m + 1})^{u\leq h}$ is a Galois-stable direct summand of the etale cohomology group $H^{\bullet}(\overline{Y}_{G}(K), \mathscr{A}^{\mathrm{Iw}}_{\mathcal{U}, m + 1})$ as a $\Lambda_{\mathcal{U}}$-module under the Betti-\'etale comparison isomorphism.
\end{lemma}
\begin{proof}
For $m \geq 0$ write 
\begin{align*}
    M_{m} :=& \mathscr{C}^{\bullet}(Y_{G}(K), A^{\mathrm{an}}_{\mathcal{U}, m}) \\
    I_{m} :=& \mathscr{C}^{\bullet}(Y_{G}(K), A^{\mathrm{Iw}}_{\mathcal{U}, m}),
\end{align*}
so that we have natural inclusions 
\begin{equation}\label{eq:aniw}
    M_{m} \subset I_{m + 1} \subset M_{m + 1}.
\end{equation}
The key point is that by fixing a chain homotopy defining the chain complex corestriction map we can take the action of $u$ on $M_{m}, I_{m + 1}$ to be the restriction of the action of $u$ on $M_{m + 1}$, so we can apply Lemma \ref{lem:shrinklem}. Since $uM_{m + 1} \subset M_{m}$ it follows from \eqref{eq:aniw} that 
$$
    uI_{m + 1} \subset  M_{m}.
$$
By Theorem \ref{thm:1} there is are idempotents $e_{u, m}^{\leq h} \in uR\{\{u\}\}$ for each $m \geq 0$ such that 
$$
    e_{u, m}^{\leq h}M_{m + 1} = I_{m + 1}^{u \leq h} = M_{m}^{u \leq h}
$$
where the second equality is Lemma \ref{lem:shrinklem}. Moreover, by Lemma \ref{lem:shrinklem} we have that 
$$
    e_{u, m}^{\leq h} M_{m} = M_{m}^{u \leq h}
$$
(i.e. $e_{u, m}^{\leq h}$ does not depend on $m$) from which we can infer that 
$$
   M_{m}^{u \leq h} = e^{\leq h}I_{m + 1}
$$ 
is a direct summand of $I_{m + 1}$ and thus $H^{\bullet}(M_{m})^{u \leq h}$ is a direct summand of the total cohomology $H^{\bullet}(I_{m})$ by functoriality. It follows that 
$$
    j_{A^{\mathrm{an}}_{\mathcal{U}, m}}^{(\bullet)}\left(H^{\bullet}(Y_{G}(K_{p}), A^{\mathrm{an}}_{\mathcal{U}, m})\right) \subset  j_{A^{\mathrm{an}}_{\mathcal{U}, m}}^{(\bullet)}\left(H^{\bullet}(Y_{G}(K_{p}), A^{\mathrm{Iw}}_{\mathcal{U}, m})\right) = H^{\bullet}(\overline{Y}_{G}(K_{p}), \mathscr{A}^{\mathrm{Iw}}_{\mathcal{U}, m})
$$
is a direct summand. 
To show Galois-stability we note that for each $i$
$$
    H^{i}(\overline{Y}_{G}(K), \mathscr{A}^{\mathrm{Iw}}_{\mathcal{U}, m}) = \varprojlim_{n}H^{i}(\overline{Y}_{G}(K), \mathscr{A}^{\mathrm{Iw}}_{\mathcal{U}, m}/\mathrm{Fil}^{n})
$$
as Galois modules, with each $H^{\bullet}(\overline{Y}_{G}(K), \mathscr{A}^{\mathrm{Iw}}_{\mathcal{U}, m}/\mathrm{Fil}^{n})$ finite. Since $\tilde{e}_{u}^{\leq h} := j_{A^{\mathrm{an}}_{\mathcal{U}, m}}^{(\bullet)} \circ e_{u}^{\leq h} \circ \left(j_{A^{\mathrm{an}}_{\mathcal{U}, m}}^{(\bullet)}\right)^{-1}$ can be represented by a polynomial in $u$ mod $p^{n}$ and since the Hecke operators commute with the Galois action we see that for $g \in G_{\Q}$
$$
    g \cdot \tilde{e}^{\leq h} = \tilde{e}^{\leq h} \cdot g \ \mathrm{mod} \ p^{n}
$$
for all $n$ and by taking the limit over $n$ we get the result. 
\end{proof}
\subsection{Refined slope decompositions and classicality} \label{sec:refinedslope}

As in \cite[Section 3.5]{salazar2021parabolic} we consider a more refined slope decomposition. For $i = 1, \ldots, n$ let $Q_{G, i}^{\mathrm{max}}$ denote the maximal  parabolic subgroups of $G$ containing $Q_{G}$. These correspond to a subset $\{\alpha_{1}, \ldots, \alpha_{n}\}$ of the simple roots of $G$ and by taking $a_{i} \in A^{-}$ such that $v(\alpha_{i}(a_{i})) > 0$ and $v(\alpha_{j}(a_{i})) = 0$ for $j \neq i$ we can associate Hecke operators $U_{i} \in \mathfrak{U}_{p}^{-}$ as the image of $a_{i}$ under the isomorphism $\mathbb{Z}_{p}[A^{-}/A(\mathbb{Z}_{p})] \cong \mathfrak{U}_{p}^{-}$.

 Let $\textbf{h} := (h_{1}, \ldots, h_{n}) \in \mathbb{Q}_{\geq 0}^{n}$. Suppose a Banach $R$ module $M$ admits a slope $\leq h_{\mathrm{aux}}$ decomposition with respect to the compact operator $U_{0} := U_{1}\cdots U_{n} \in \mathfrak{U}_{p}^{--}$ for some $h_{\mathrm{aux}} > \prod_{i}h_{i}$, so that $M^{\leq h_{\mathrm{aux}}}$ is a finite projective Banach $R$-module. In particular, the whole of $A^{-}$ acts compactly on $M^{\leq h_{\mathrm{aux}}}$ and supposing that the Fredholm series $F_{i}$ admit slope $h_{i}$ decompositions for each $i$ then we can define 
$$
    M^{\leq \textbf{h}} = \cap_{i} (M^{\leq h_{\mathrm{aux}}})^{\leq h_{i}}.
$$
\begin{lemma} \label{lem:multidec}
For $0 \leq i \leq n$ let $\textbf{h}^{(i)} = (h_{1}, \ldots, h_{i})$ and suppose the $U_{i}$ act compactly on $M$. Set 
$$
    M^{\leq \textbf{h}^{(i + 1)}} 
   = (M^{\leq\textbf{h}^{(i)}})^{U_{i + 1}\leq  \textbf{h}_{i + 1}}.
$$
Then 
$$
     M^{\leq \textbf{h}^{(n)}}  =  M^{\leq \textbf{h}}.
$$
\end{lemma}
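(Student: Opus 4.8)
The plan is to exploit that $U_1,\dots,U_n$ all lie in the commutative algebra $\mathfrak{U}_p^-$, so that the Riesz projectors attached to the various $U_i$ form a commuting family of idempotents; the lemma then becomes a purely formal statement about decomposing a module by a commuting family of projectors, proved by induction on $i$.

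First I would record the relevant projectors. Since $M^{\leq h_{\mathrm{aux}}}$ is a finite projective Banach $R$-module on which every element of $A^-$ acts $R$-linearly (compactly, being of finite rank), and since by hypothesis the Fredholm series $F_i$ of $U_i$ on $M^{\leq h_{\mathrm{aux}}}$ has a slope $\leq h_i$ factorisation, Theorem \ref{thm:1} applied to $M^{\leq h_{\mathrm{aux}}}$ furnishes for each $i$ an idempotent $e_i := R_{Q_i}(U_i)$ realising
$$
M^{\leq h_{\mathrm{aux}}} = (M^{\leq h_{\mathrm{aux}}})^{U_i \leq h_i} \oplus (M^{\leq h_{\mathrm{aux}}})^{U_i > h_i},
$$
and — this is the one non-formal point — $e_i$ is a limit of polynomials in $U_i$ with coefficients in $R$ (the series $R_{Q_i}(T)\in TR\{\{T\}\}$ of Theorem \ref{thm:1}). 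Because $\mathfrak{U}_p^-$ is commutative, each $e_i$ commutes with every $U_j$, hence with every $e_j$; thus $\{e_1,\dots,e_n\}$ is a commuting family of idempotents on $M^{\leq h_{\mathrm{aux}}}$.

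Next I would prove by induction on $i$ that $M^{\leq \textbf{h}^{(i)}} = e_1\cdots e_i\cdot M^{\leq h_{\mathrm{aux}}}$. The case $i=0$ is the definition. For the inductive step put $N := M^{\leq\textbf{h}^{(i)}} = e_1\cdots e_i M^{\leq h_{\mathrm{aux}}}$. As the $e_j$ commute with $U_{i+1}$, $N$ is a $U_{i+1}$-stable direct summand of $M^{\leq h_{\mathrm{aux}}}$, hence itself finite projective over $R$; and since $e_{i+1}$ commutes with $e_1\cdots e_i$, the composite $e_{i+1}e_1\cdots e_i$ is an idempotent on $N$ whose image $N\cap (M^{\leq h_{\mathrm{aux}}})^{U_{i+1}\leq h_{i+1}}$ and complement $N\cap (M^{\leq h_{\mathrm{aux}}})^{U_{i+1}>h_{i+1}}$ visibly satisfy the defining axioms of a slope $\leq h_{i+1}$ decomposition of $N$ for $U_{i+1}$: finiteness of the $\leq h_{i+1}$ part, annihilation by the relevant $Q^*(U_{i+1})$, and invertibility of such $Q^*(U_{i+1})$ on the $>h_{i+1}$ part are all inherited by restriction from $M^{\leq h_{\mathrm{aux}}}$. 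By uniqueness of slope decompositions this gives $(M^{\leq\textbf{h}^{(i)}})^{U_{i+1}\leq h_{i+1}} = e_{i+1}N = e_1\cdots e_{i+1}M^{\leq h_{\mathrm{aux}}}$, completing the induction.

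Taking $i=n$ yields $M^{\leq\textbf{h}^{(n)}} = e_1\cdots e_n M^{\leq h_{\mathrm{aux}}} = \bigcap_{i=1}^n e_i M^{\leq h_{\mathrm{aux}}} = \bigcap_{i=1}^n (M^{\leq h_{\mathrm{aux}}})^{U_i\leq h_i} = M^{\leq\textbf{h}}$, as claimed. I expect the main (and only mild) obstacle to be bookkeeping rather than substance: one must check that the ambient module over which each single-operator slope decomposition is taken is genuinely finite projective at every stage of the induction — which follows because a $U_{i+1}$-stable direct summand of a finite projective module is again finite projective — so that Theorem \ref{thm:1} and the uniqueness of slope decompositions legitimately apply throughout, and that the projectors being power series in the respective $U_i$ is what makes the whole commuting-idempotents picture work.
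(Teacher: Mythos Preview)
Your proof is correct and is essentially the same argument as the paper's, organised slightly differently. The paper reduces to the two-operator statement $M^{U_1\leq h_1}\cap M^{U_2\leq h_2}=(M^{U_1\leq h_1})^{U_2\leq h_2}$ and proves both inclusions directly from the polynomial characterisation of slope decompositions (using that the factor $\tilde{Q}_2$ for $U_2$ on the summand divides the ambient $Q_2$), whereas you package the same content via the commuting Riesz idempotents $e_i=R_{Q_i}(U_i)$ from Theorem~\ref{thm:1} and then invoke uniqueness of slope decompositions. Both routes hinge on the same fact---that the $U_j$-stable summand $M^{U_i\leq h_i}$ inherits its slope decomposition from the ambient one because the operators commute---so neither buys anything the other doesn't; your version is perhaps a little more systematic for the $n$-operator induction, while the paper's avoids appealing to the explicit power-series form of the projector.
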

\begin{proof}

 It suffices to prove the following statement: Suppose $M$ is a Banach module equipped with two compact operators $U_{1}, U_{2}$ whose Fredholm determinants $F_{1}, F_{2}$ admit slope $h_{1}, h_{2}$ factorisations respectively. Then 
 $$
    M^{U_{1} \leq h_{1}} \cap  M^{U_{2} \leq h_{2}} =  (M^{U_{1} \leq h_{1}})^{U_{2} \leq h_{2}}
 $$
 Suppose $F_{2} = Q_{2}S_{2}$ is the slope $\leq h_{2}$ factorisation and $\tilde{F}_{2} = \tilde{Q}_{2}\tilde{S}_{2}$ is a slope factorisation of the Fredholm determinant $\tilde{F}_{2}$ of $U_{2}$ restricted to $M^{U_{1} \leq h_{1}}$. Then $\tilde{Q}_{2}$ divides $Q_{2}$ so for $m \in  (M^{U_{1} \leq h_{1}})^{U_{2} \leq h_{2}}$ we have $Q^{*}_{2}m = 0$ and thus $m \in  M^{U_{1} \leq h_{1}} \cap  M^{U_{2} \leq h_{2}}$. 
 
 Conversely suppose $m \in  M^{U_{1} \leq h_{1}} \cap  M^{U_{2} \leq h_{2}}$. Then in particular $m \in  M^{U_{1} \leq h_{1}}$ and so we can write $m = m_{2} + n$ where $m_{2} \in  (M^{U_{1} \leq h_{1}})^{U_{2} \leq h_{2}}$ and $n$ is in the complement $ (M^{U_{1} \leq h_{1}})^{U_{2} > h_{2}}$. As $m \in  M^{U_{2} \leq h_{2}}$ we have $Q^{*}_{2}(U_{2})m = 0$ but also $Q^{*}_{2}(U_{2})m_{2} = 0$ by the same argument as in the first inclusion, so $Q^{*}_{2}(U_{2})n = 0$ and since $Q^{*}_{2}$ is a slope $\leq h_{2}$ polynomial and $Q^{*}_{2}(0)$ is a multiplicative unit then $n = 0$ by Definition \ref{def:slope}. 
\end{proof}
\begin{corollary}
Let $M$ be a projective Banach $R$-module with an action of an $R$-linear compact operator $u$. Then the module $M^{\leq \textbf{h}}$ is a finite projective $R$-module and a direct summand of $M$ with projector $e^{\leq \textbf{h}} \in R\{\{U_{1}, \ldots, U_{n}\}\}$.
\end{corollary}
\begin{proof}
The above lemma states that we can obtain the refined slope decomposition $M^{\leq \textbf{h}}$ on $M^{\leq h_{\mathrm{aux}}}$ by iteratively taking a finite number of slope decompositions so the result follows from Theorem \ref{thm:1}.
\end{proof}
\begin{definition}
Set $h_{i}^{\mathrm{crit}} := -(\langle \lambda, \alpha_{i} \rangle + 1)v(\alpha(a_{i}))$.
\end{definition}

We say $\textbf{h}$ is \textit{non-critical} if for all $i = 1, \ldots, n$ we have $h_{i} < h^{\mathrm{crit}}_{i}$. 
\begin{theorem} \label{thm:classical}
For $\textbf{h} \in \mathbb{Q}_{\geq 0}^{n}$ non-critical, $\lambda \in X^{\bullet}_{+}(S_{G})$ and any $m \geq 0$ there is a quasi-isomorphism:
$$
    R\Gamma(Y_{G}(K), A_{\lambda,m}^{\mathrm{an}})^{\leq \textbf{h}} \cong  R\Gamma(Y_{G}(K), V^{G}_{\lambda})^{\leq \textbf{h}}. 
$$
\end{theorem}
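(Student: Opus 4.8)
The plan is to deduce the comparison from the locally analytic version of the Bernstein--Gelfand--Gelfand machinery, peeling off one simple root at a time, in combination with the refined slope decomposition of Section~\ref{sec:refinedslope} and Lemma~\ref{lem:multidec}. The comparison morphism itself is the one induced by the natural $J_G$- and $A^-$-equivariant inclusion $\iota\colon V^G_\lambda\hookrightarrow A^{\mathrm{an}}_{\lambda,m}$ recorded above; since $\iota$ is equivariant for right translation by $J_G$ it commutes with every double coset operator $U_i=[J_Ga_iJ_G]$, hence induces a map of complexes of Banach modules $R\Gamma(\bar Y_G(K),V^G_\lambda)\to R\Gamma(\bar Y_G(K),A^{\mathrm{an}}_{\lambda,m})$ compatible with all the $U_i$ at once, and the assertion is that it becomes a quasi-isomorphism after applying the projector $e^{\leq\mathbf{h}}$. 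Because $\lambda$ is a fixed algebraic weight the base ring here is a field, so every Fredholm series that occurs is entire and all the slope factorisations (for $U_0=U_1\cdots U_n$ and for each $U_i$) exist unconditionally; in particular $(-)^{\leq\mathbf{h}}$ is an exact functor on the relevant Banach complexes and sends distinguished triangles to distinguished triangles.

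First I would record the locally analytic BGG input. For each maximal parabolic $Q^{\mathrm{max}}_{G,i}$, with associated simple root $\alpha_i$ and $d_i:=\langle\lambda,\alpha_i^\vee\rangle$, one has a $J_G$-equivariant differential operator
\begin{equation*}
\Theta_i\colon A^{\mathrm{an}}_{\lambda,m}\longrightarrow A^{\mathrm{an}}_{s_i\cdot\lambda,m}
\end{equation*}
(the $\alpha_i$-direction analogue of the operator $\tfrac{d^{k+1}}{dx^{k+1}}\colon A_k\to A_{-k-2}$ for $\mathrm{GL}_2$, cf.\ Example~\ref{ex:BF}; the target is the analytic module for the possibly non-dominant, suitably twisted weight $s_i\cdot\lambda$), with two properties: (i) after the usual divided-power normalisation $\Theta_i$ is defined over $\mathcal{O}$ and satisfies the $\Theta$-operator identity $\Theta_i\circ U_i=\alpha_i(a_i)^{d_i+1}\,U_i\circ\Theta_i$; and (ii) $V^G_\lambda=\bigcap_i\ker\Theta_i$ inside $A^{\mathrm{an}}_{\lambda,m}$. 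Setting $K_J:=\bigcap_{i\in J}\ker\Theta_i$ for $J\subseteq\{1,\dots,n\}$ yields $J_G$- and $A^-$-stable closed submodules with $K_\emptyset=A^{\mathrm{an}}_{\lambda,m}$, $K_{\{1,\dots,n\}}=V^G_\lambda$, and for $i\notin J$ an injection of coefficient modules $K_J/K_{J\cup\{i\}}\hookrightarrow A^{\mathrm{an}}_{s_i\cdot\lambda,m}$ induced by $\Theta_i$. Since $U_i$ acts integrally on $A^{\mathrm{an}}_{s_i\cdot\lambda,m}$ and $\Theta_i$ is defined over $\mathcal{O}$, the operator $\alpha_i(a_i)^{-(d_i+1)}U_i$ acts integrally on $K_J/K_{J\cup\{i\}}$; hence all $U_i$-slopes on $H^\bullet(\bar Y_G(K),K_J/K_{J\cup\{i\}})$ are $\geq(d_i+1)\,v(\alpha_i(a_i))=h^{\mathrm{crit}}_i$.

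Now I would run a telescope along the chain $A^{\mathrm{an}}_{\lambda,m}=K_\emptyset\supseteq K_{\{1\}}\supseteq K_{\{1,2\}}\supseteq\cdots\supseteq K_{\{1,\dots,n\}}=V^G_\lambda$. Each inclusion $K_{J\cup\{i\}}\hookrightarrow K_J$ sits in a short exact sequence of $\mathbb{Q}_p[J_G]$-modules with compatible $A^-$-action; applying the exact functor $\mathscr{C}^\bullet(\bar Y_G(K),-)$ and passing to the derived category gives a distinguished triangle
\begin{equation*}
R\Gamma(\bar Y_G(K),K_{J\cup\{i\}})\longrightarrow R\Gamma(\bar Y_G(K),K_J)\longrightarrow R\Gamma(\bar Y_G(K),K_J/K_{J\cup\{i\}})\xrightarrow{\,+1\,}.
\end{equation*}
Applying $e^{\leq\mathbf{h}}$ and using that $\mathbf{h}$ is non-critical, so $h_i<h^{\mathrm{crit}}_i$, the third term vanishes, since $R\Gamma(\bar Y_G(K),K_J/K_{J\cup\{i\}})^{\leq\mathbf{h}}\subseteq R\Gamma(\bar Y_G(K),K_J/K_{J\cup\{i\}})^{U_i\leq h_i}=0$ by the slope bound just established. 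Hence $R\Gamma(\bar Y_G(K),K_{J\cup\{i\}})^{\leq\mathbf{h}}\xrightarrow{\ \sim\ }R\Gamma(\bar Y_G(K),K_J)^{\leq\mathbf{h}}$, and composing the $n$ steps of the chain (whose total composite of inclusions is $\iota$) yields the quasi-isomorphism $R\Gamma(\bar Y_G(K),V^G_\lambda)^{\leq\mathbf{h}}\xrightarrow{\ \sim\ }R\Gamma(\bar Y_G(K),A^{\mathrm{an}}_{\lambda,m})^{\leq\mathbf{h}}$.

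The hard part will be the locally analytic BGG input of the second paragraph in the generality considered here: constructing the operators $\Theta_i$ attached to the maximal parabolics $Q^{\mathrm{max}}_{G,i}$, verifying their $J_G$-equivariance into the correctly twisted analytic module, computing their kernels so as to confirm $V^G_\lambda=\bigcap_i\ker\Theta_i$, and pinning down the $\Theta$-operator congruence $\Theta_i\circ U_i=\alpha_i(a_i)^{d_i+1}\,U_i\circ\Theta_i$ with precisely this power of $\alpha_i(a_i)$ (equivalently, the integrality of $\alpha_i(a_i)^{-(d_i+1)}U_i$ on each successive quotient). This is cleanest one root at a time, working with a single $Q^{\mathrm{max}}_{G,i}$ and the corresponding root $\mathrm{SL}_2$ --- precisely the corank-one situation for which Section~\ref{sec:refinedslope} and Lemma~\ref{lem:multidec} were set up. The remaining points are routine: each $K_J$ is a closed $A^{--}$-stable submodule of $A^{\mathrm{an}}_{\lambda,m}$, hence orthonormalisable over the ground field with $A^{--}$ acting compactly (by restriction from \eqref{eq:anal}), and the same then holds for the quotients $K_J/K_{J\cup\{i\}}$, so the slope decompositions and Lemma~\ref{lem:multidec} apply throughout.
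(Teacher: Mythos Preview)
Your proposal is correct and follows essentially the same route as the paper: the paper's proof is a one-line citation of \cite[Theorem 4.4]{salazar2021parabolic}, which proves the distribution-side statement via the parabolic locally analytic BGG complex, and remarks that the same argument (undualised) works here. Your sketch unpacks exactly this BGG input --- the intertwining operators $\Theta_i$ attached to the simple roots $\alpha_i\in\Delta_G\setminus\Delta_L$, the identification $V^G_\lambda=\bigcap_i\ker\Theta_i$, and the twisted commutation $\Theta_i\circ U_i=\alpha_i(a_i)^{d_i+1}U_i\circ\Theta_i$ forcing the $U_i$-slopes on the successive quotients to be at least $h_i^{\mathrm{crit}}$ --- and then feeds it through Lemma~\ref{lem:multidec}.

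The only organisational difference is that you telescope along a descending filtration $K_\emptyset\supset K_{\{1\}}\supset\cdots\supset K_{\{1,\dots,n\}}$ by intersections of kernels, whereas the cited argument packages the same data as an exact BGG complex $0\to V^G_\lambda\to A^{\mathrm{an}}_{\lambda,m}\to\bigoplus_i A^{\mathrm{an}}_{s_i\cdot\lambda,m}\to\cdots$ and kills the higher terms after applying $e^{\leq\mathbf h}$. Both arrangements rely on the same slope estimate and are interchangeable; your filtration version has the mild advantage that it only needs exactness of the BGG sequence at the first step (which is the identification of $V^G_\lambda$ with the joint kernel), not the full resolution. Your closing remarks about orthonormalisability of the $K_J$ and compactness of $A^{--}$ on them are correct over a field and are what make $e^{\leq\mathbf h}$ applicable termwise in your triangles.
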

\begin{proof}
 This is proved for compactly supported cohomology with coefficients in modules of distributions in \cite[Theorem 4.4]{salazar2021parabolic} using the dual of a parabolic locally analytic BGG complex. The same proof applied without taking the dual adapts easily to our setting using this complex.
\end{proof}
More generally we introduce the following notion:
\begin{definition} \label{def:noncrit}
Let $\mathfrak{m} \subset \mathbb{T}^{-}_{S, p}[1/p]$ be a maximal ideal such that $R\Gamma(Y_{G}(K), A^{\mathrm{an}}_{\lambda, m})[1/p]_{\mathfrak{m}} \neq 0$ . We say that $\mathfrak{m}$ is \textit{non-critical} if the map $V_{\lambda}^{G} \to A_{\lambda,m}^{\mathrm{an}}[1/p]$ induces a quasi-isomorphism
$$
    R\Gamma(Y_{G}(K), A_{\lambda,m}^{\mathrm{an}})[1/p]_{\mathfrak{m}}\cong  R\Gamma(Y_{G}(K), V^{G}_{\lambda})_{\mathfrak{m}}. 
$$
\end{definition}
We end with a variation of Proposition \ref{prop:shrinkslope}
\begin{lemma}
Let $R$ be an affinoid algebra and let $C^{\bullet}$ be a complex of projective Banach $R$-algebras equipped with a continuous $R$-linear action of $\mathfrak{U}_{p}^{-}$ such that $\mathfrak{U}_{p}^{--}$ acts via compact operators. Then for any $x \in \mathrm{Sp}(R)$ and $\textbf{h} \in \mathbb{Q}_{\geq 0}^{n}$ there is an affinoid subdomain  $\mathrm{Sp}(R') \subset \mathrm{Sp}(R)$ such that $x \in \mathrm{Sp}(R')$ and such that the complex $C^{\bullet} \hat{\otimes}_{R}R'$ admits a slope $\leq \textbf{h}$ decomposition and $(C^{\bullet} \hat{\otimes}_{R} R')^{\leq \textbf{h}}$ is a complex of finite flat $R'$-modules.
\end{lemma}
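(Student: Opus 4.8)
The plan is to deduce this from Proposition~\ref{prop:shrinkslope} by iterating it once for each of the operators $U_1,\dots,U_n$, peeling off one slope condition at a time exactly as in Section~\ref{sec:refinedslope}.

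First I would treat $U_0 := U_1\cdots U_n \in \mathfrak{U}_p^{--}$. Choosing $h_{\mathrm{aux}}\in\mathbb{Q}_{>0}$ large enough that every class killed by a slope~$\le h_i$ polynomial in $U_i$ for all $i$ has $U_0$-slope $\le h_{\mathrm{aux}}$ (possible since the slopes of a product of the $U_i$ are controlled linearly by the individual slopes), Proposition~\ref{prop:shrinkslope} applied to the compact operator $U_0$ and this bound gives an affinoid $\mathrm{Sp}(R_0)\subset\mathrm{Sp}(R)$ with $x\in\mathrm{Sp}(R_0)$ over which $C^\bullet\hat\otimes_R R_0$ has a slope $\le h_{\mathrm{aux}}$ decomposition for $U_0$, with finite-flat part $C_0^\bullet:=(C^\bullet\hat\otimes_R R_0)^{U_0\le h_{\mathrm{aux}}}$. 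Since $\mathfrak{U}_p^-$ is commutative it preserves this decomposition, so $C_0^\bullet$ retains a continuous $R_0$-linear $\mathfrak{U}_p^-$-action.

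Then I would induct: given $C_i^\bullet$ over $R_i$ (finite flat, with $\mathfrak{U}_p^-$-action, realised as the slope $\le(h_1,\dots,h_i)$ part of $C^\bullet\hat\otimes_R R_i$), the operator $U_{i+1}$ is a continuous endomorphism of a complex of finite projective $R_i$-modules, hence compact with polynomial Fredholm determinant, so Proposition~\ref{prop:shrinkslope} applied to $U_{i+1}$ and $h_{i+1}$ shrinks $R_i$ to an affinoid $R_{i+1}$ still containing $x$ with $C_{i+1}^\bullet:=(C_i^\bullet\hat\otimes_{R_i}R_{i+1})^{U_{i+1}\le h_{i+1}}$ finite flat; by Lemma~\ref{lem:multidec} this is the slope $\le(h_1,\dots,h_{i+1})$ part of $C^\bullet\hat\otimes_R R_{i+1}$, and again commutativity gives it a $\mathfrak{U}_p^-$-action. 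After $n$ steps I set $R':=R_n$: the successive slope projectors are polynomials in the $U_i$ and assemble (as in the corollary to Lemma~\ref{lem:multidec}) into an idempotent in $R'\{\{U_1,\dots,U_n\}\}$ cutting out a genuine slope $\le\mathbf{h}$ decomposition of $C^\bullet\hat\otimes_R R'$, whose $\le\mathbf{h}$ part equals $C_n^\bullet$ and is a complex of finite flat $R'$-modules; the choice of $h_{\mathrm{aux}}$ ensures the initial $U_0$-truncation discarded nothing of slope $\le\mathbf{h}$.

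I expect the only real subtlety to be bookkeeping: verifying at each stage that the relevant Hecke operator descends to the finite-flat truncation so that Proposition~\ref{prop:shrinkslope} is applicable (which needs only commutativity of $\mathfrak{U}_p^-$), that the nested truncations are mutually compatible (Lemma~\ref{lem:multidec}), and that truncating by $U_0$ first is harmless — which is precisely what the auxiliary bound $h_{\mathrm{aux}}$ is for. No genuinely new argument beyond Proposition~\ref{prop:shrinkslope} and the formalism of Section~\ref{sec:refinedslope} seems to be needed.
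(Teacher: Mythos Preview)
Your proposal is correct and follows essentially the same approach as the paper: first truncate by the compact operator $U_0=U_1\cdots U_n$ at an auxiliary slope using Proposition~\ref{prop:shrinkslope}, then shrink further so that each $U_i$ admits a slope $\le h_i$ decomposition on the resulting finite flat complex, with finite flatness of the $\le\mathbf{h}$ part coming from the corollary to Lemma~\ref{lem:multidec}. The only cosmetic difference is that the paper produces the affinoids for each $U_i$ in parallel and then intersects them, whereas you shrink inductively; both yield the same $R'$.
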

\begin{proof}
We know that by Proposition \ref{prop:shrinkslope} there is an affinoid $\mathrm{Sp}(R_{0})$ containing $x$ and such that $U_{0}$ admits a slope $\leq h_{\mathrm{aux}}$ decomposition on $\oplus_{i} C^{i}$ and affinoids $\mathrm{Sp}(R_{i})$ such that $U_{i}$ admits a slope decomposition on $(\oplus_{i} C^{i})^{\leq h_{\mathrm{aux}}}$ and  $x \in \mathrm{Sp}(R_{i})$ for each $i$. Taking the intersection of these subsets gives us the required affinoid. The finite flatness follows from the above corollary.
\end{proof}
\begin{definition}
If $\mathcal{U}$ is a wide-open disc such that $H^{\bullet}(Y_{G}(K), A^{\mathrm{an}}_{\mathcal{U}, m})$ admits a slope $\leq \textbf{h}$ decomposition for some (and therefore any) $m \geq 0$ we say that $\mathcal{U}$ is \textit{$\textbf{h}$-adapted}.
\end{definition}
\subsection{Control theorem}

We prove control results for the cohomology of locally symmetric spaces. 
\begin{lemma}
Let $\lambda \in \mathcal{W}_{m}(\Qp)$ be an algebraic weight and $\mathcal{U} \subset \mathcal{W}_{m}$ a wide-open disc containing $\lambda$, then there is a quasi-isomorphism
$$
    R\Gamma(Y_{G}(K), A^{\mathrm{an}}_{\mathcal{U}, m})[1/p] \otimes^{L}_{\Lambda_{\mathcal{U}}[1/p]} \Qp \sim R\Gamma(Y_{G}(K), A^{\mathrm{an}}_{\lambda, m})[1/p].
$$
where $\Qp$ is given the structure of a $\Lambda_{\mathcal{U}}$-module via the homomorphism $\Lambda_{\mathcal{U}} \to \Qp$ given by evaluation at $\lambda$. 
\end{lemma}
\begin{proof}
This follows from the fact that 
$$
    A^{\mathrm{an}}_{\mathcal{U}, m}[1/p] \otimes_{\Lambda_{\mathcal{U}}[1/p]} \Qp = A^{\mathrm{an}}_{\lambda, m}[1/p]. 
$$ 
\end{proof}
\begin{corollary} \label{cor:specseq}
For $\textbf{h} \in \mathbb{Q}_{\geq 0}^{n}$ non-critical, an $\textbf{h}$ adapted wide-open disc $\mathcal{U} \subset \mathcal{W}_{m}$ and algebraic $\lambda \in \mathcal{U}$, there is a quasi-isomomorphism
$$
     R\Gamma(Y_{G}(K), A^{\mathrm{an}}_{\mathcal{U}, m})[1/p]^{\leq \textbf{h}} \otimes^{L}_{\Lambda_{\mathcal{U}}[1/p]} \Qp \sim R\Gamma(Y_{G}(K), V_{\lambda, \mathcal{O}})[1/p]^{\leq \textbf{h}}.
$$
\end{corollary}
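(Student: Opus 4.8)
The plan is to combine the base-change quasi-isomorphism of the preceding lemma with the classicality statement of Theorem~\ref{thm:classical}; the one non-formal point is that passage to the slope-$\leq\textbf{h}$ part commutes with the derived specialisation $-\otimes^{L}_{\Lambda_{G}(\mathcal{U})[1/p]}L$.

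First, by the preceding lemma there is a quasi-isomorphism
$$
R\Gamma(\bar{Y}_{G}(K), A^{\mathrm{an}}_{\mathcal{U}, m})[1/p]\otimes^{L}_{\Lambda_{G}(\mathcal{U})[1/p]}L \;\sim\; R\Gamma(\bar{Y}_{G}(K), A^{\mathrm{an}}_{\lambda, m}\otimes_{\mathbb{Q}_{p}}L),
$$
and, since both sides are computed from the Borel--Serre complex on which $\mathfrak{U}_{p}^{-}$ acts through double cosets compatibly with the maps of coefficient modules, this quasi-isomorphism is $\mathfrak{U}_{p}^{-}$-equivariant.

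Next, since $\mathcal{U}$ is $\textbf{h}$-adapted, the finite-slope idempotent $e^{\leq\textbf{h}}\in\mathcal{O}(\mathcal{V})\{\{U_{1},\ldots,U_{n}\}\}$ exists on $R\Gamma(\bar{Y}_{G}(K), A^{\mathrm{an}}_{\mathcal{U}, m})[1/p]$ for a closed affinoid $\mathcal{V}$ with interior $\mathcal{U}$ (as in the proof of the earlier lemma), cutting out a perfect complex of $\Lambda_{G}(\mathcal{U})[1/p]$-modules that is finite flat in each degree. Applying $e^{\leq\textbf{h}}$ to the direct-sum decomposition of complexes and then $-\otimes^{L}_{\Lambda_{G}(\mathcal{U})[1/p]}L$ is legitimate because derived tensor respects direct sums and the slope-$\leq\textbf{h}$ summand is flat, so no higher Tor terms intervene; it then remains to identify the idempotent $e^{\leq\textbf{h}}\otimes 1$ on the right-hand side. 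Because $\lambda\in\mathcal{U}\subset\mathcal{V}$, the specialisation $\mathcal{O}(\mathcal{V})\to L$ carries $e^{\leq\textbf{h}}$ to the slope-$\leq\textbf{h}$ idempotent for the $U_{i}$ acting on $R\Gamma(\bar{Y}_{G}(K), A^{\mathrm{an}}_{\lambda, m}\otimes_{\mathbb{Q}_{p}}L)$, since the Fredholm series of each $U_{i}$ specialises correctly and its slope factorisation is preserved under this base change. Hence
$$
R\Gamma(\bar{Y}_{G}(K), A^{\mathrm{an}}_{\mathcal{U}, m})[1/p]^{\leq\textbf{h}}\otimes^{L}_{\Lambda_{G}(\mathcal{U})[1/p]}L \;\sim\; R\Gamma(\bar{Y}_{G}(K), A^{\mathrm{an}}_{\lambda, m}\otimes_{\mathbb{Q}_{p}}L)^{\leq\textbf{h}}.
$$

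Finally, since $\textbf{h}$ is non-critical and $\lambda\in X^{\bullet}_{+}(S_{G})$ is algebraic, Theorem~\ref{thm:classical} gives a quasi-isomorphism $R\Gamma(\bar{Y}_{G}(K), \mathscr{A}^{\mathrm{an}}_{\lambda, m})^{\leq\textbf{h}}\sim R\Gamma(\bar{Y}_{G}(K), V_{\lambda,\mathcal{O}})^{\leq\textbf{h}}$; base-changing along the finite flat extension $\mathbb{Q}_{p}\to L$ (harmless, as the parabolic BGG complex underlying the proof of Theorem~\ref{thm:classical} is defined over $\mathbb{Q}_{p}$) and splicing onto the previous display yields the claim. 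The step I expect to be the main obstacle is the middle one: verifying that forming slope decompositions is genuinely horizontal over $\mathcal{U}$, i.e. commutes with $-\otimes^{L}_{\Lambda_{G}(\mathcal{U})[1/p]}L$. This hinges on the projector lying in $\mathcal{O}(\mathcal{V})\{\{U_{1},\ldots,U_{n}\}\}$ with $\lambda$ inside $\mathcal{V}$, together with the finiteness and flatness of the slope-$\leq\textbf{h}$ complex, so that the base-change comparison is a genuine quasi-isomorphism and not merely a statement about cohomology modules.
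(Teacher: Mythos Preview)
Your proof is correct and follows the same route as the paper, which simply says the result is ``an immediate corollary of Theorem~\ref{thm:classical} and the previous lemma.'' You have unpacked the one step the paper leaves implicit---that passing to the slope-$\leq\textbf{h}$ summand commutes with $-\otimes^{L}_{\Lambda_{G}(\mathcal{U})[1/p]}L$---and your justification via the idempotent $e^{\leq\textbf{h}}$ living over the ambient affinoid $\mathcal{V}$ and the finite-flatness of the small-slope complex is exactly the right one.
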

\begin{proof}
This is an immediate corollary of Theorem \ref{thm:classical} and the previous lemma.
\end{proof}

More generally, for \textit{any} $\textbf{h} \in \Q^{n}_{ \geq 0}$ and a non-critical maximal ideal $\mathfrak{m} \subset \mathcal{T}_{\lambda, \textbf{h}}$ (i.e. $\mathfrak{m}$ pulls back to a non-critical maximal ideal under the surjection $\mathbb{T}^{-}_{S, p}[1/p] \to \mathcal{T}_{\lambda, \textbf{h}}$) lifting to a maximal ideal $\mathfrak{M} \subset \mathcal{T}_{\mathcal{U}, \textbf{h}}$, we have 
$$
     R\Gamma(Y_{G}(K), A^{\mathrm{an}}_{\mathcal{U}, m})[1/p]_{\mathfrak{M}}^{\leq \textbf{h}} \otimes^{L}_{\Lambda_{\mathcal{U}}[1/p]} \Qp \sim R\Gamma(Y_{G}(K), V_{\lambda, \mathcal{O}})[1/p]_{\mathfrak{m}}^{\leq \textbf{h}}.
$$
\subsection{Vanishing results}
Let $\mathcal{U}$ be an $\textbf{h}$-adapted wide-open disc. 
\begin{definition}
For $\textbf{h} \in \mathbb{Q}^{n}_{\geq 0}$ set
\begin{align*}
    \mathcal{T}_{\mathcal{U}, \textbf{h}} &= \mathrm{im}\left( \mathbb{T}^{-}_{S,p}\otimes \Lambda_{\mathcal{U}}[1/p] \to \mathrm{End}_{\Lambda_{\mathcal{U}}[1/p]}(R\Gamma(Y_{G}(K), A^{\mathrm{an}}_{\mathcal{U}, m})[1/p]^{\leq \textbf{h}} )\right ) \\
     \mathcal{T}_{\lambda, \textbf{h}} &= \mathrm{im}\left( \mathbb{T}^{-}_{S,p}[1/p] \to \mathrm{End}_{\mathcal{O}}(R\Gamma (Y_{G}(K), A_{\lambda, m}^{\mathrm{an}})[1/p]^{\leq \textbf{h}}) \right )
\end{align*}
\end{definition}

\begin{lemma} 
The natural map 
$$
    r_{\lambda}: \mathcal{T}_{\mathcal{U}, \textbf{h}} \to \mathcal{T}_{\lambda, \textbf{h}}
$$
induces a bijection
\begin{equation} \label{eq:bij}
    \mathrm{Specm}(\mathcal{T}_{\lambda, \textbf{h}}) \to \mathrm{Specm}(\mathcal{T}_{\mathcal{U}, \textbf{h}}/ \mathrm{ker}(r_{\lambda}))
\end{equation}
\end{lemma}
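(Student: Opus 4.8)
The plan is to reduce the statement to the control theorem (Corollary \ref{cor:specseq}) together with elementary commutative algebra for the finite Hecke algebras involved. First I would record the finiteness properties. After shrinking $\mathcal{U}$ so that it is $\textbf{h}$-adapted, the slope-$\leq\textbf{h}$ complex $C^{\bullet}_{\mathcal{U}} := R\Gamma(\bar{Y}_{G}(K), A^{\mathrm{an}}_{\mathcal{U}, m})[1/p]^{\leq \textbf{h}}$ is a bounded complex of finite projective $\Lambda := \Lambda_{G}(\mathcal{U})[1/p]$-modules by the refined slope-decomposition results of Section \ref{sec:refinedslope}, hence perfect; its cohomology is then finitely generated over the Noetherian ring $\Lambda$, and so is $\mathcal{T}_{\mathcal{U},\textbf{h}}$, the image of $\mathbb{T}_{S,p}^{-}$ in its endomorphisms. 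Likewise $C^{\bullet}_{\lambda} := R\Gamma(\bar{Y}_{G}(K), V_{\lambda,\mathcal{O}})[1/p]^{\leq \textbf{h}}$ has finite-dimensional cohomology, so $\mathcal{T}_{\lambda,\textbf{h}}$ is an Artinian $\mathbb{Q}_{p}$-algebra. Note also that the projector $e^{\leq\textbf{h}}$ lies in $\Lambda\{\{U_{1},\ldots,U_{n}\}\}$ and hence commutes with the $\mathbb{T}_{S,p}^{-}$-action, so every map in sight is $\mathbb{T}_{S,p}^{-}$-equivariant.

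The next step is to produce the ring map and show it is surjective. The quasi-isomorphism of Corollary \ref{cor:specseq}, $C^{\bullet}_{\mathcal{U}} \otimes^{L}_{\Lambda} L \simeq C^{\bullet}_{\lambda} \otimes_{\mathbb{Q}_{p}} L$, is $\mathbb{T}_{S,p}^{-}$-equivariant; combined with the base-change identity $\mathrm{RHom}_{\Lambda}(C^{\bullet}_{\mathcal{U}}, C^{\bullet}_{\mathcal{U}}) \otimes^{L}_{\Lambda} L \simeq \mathrm{RHom}_{L}(C^{\bullet}_{\mathcal{U}} \otimes^{L}_{\Lambda} L,\, C^{\bullet}_{\mathcal{U}} \otimes^{L}_{\Lambda} L)$ for the perfect complex $C^{\bullet}_{\mathcal{U}}$, this yields a commuting diagram of $\mathbb{Q}_{p}$-algebra maps out of $\mathbb{T}_{S,p}^{-}$: to $\mathcal{T}_{\mathcal{U},\textbf{h}}$, thence to $\mathcal{T}_{\mathcal{U},\textbf{h}} \otimes_{\Lambda} L$, and then to $\mathcal{T}_{\lambda,\textbf{h}} \otimes_{\mathbb{Q}_{p}} L$, the last refining the natural map $\mathcal{T}_{\mathcal{U},\textbf{h}} \to \mathcal{T}_{\lambda,\textbf{h}}$. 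Since $\mathbb{T}_{S,p}^{-} \to \mathcal{T}_{\lambda,\textbf{h}}$ is surjective by definition, the natural map $\mathcal{T}_{\mathcal{U},\textbf{h}} \to \mathcal{T}_{\lambda,\textbf{h}}$ is surjective. A surjection of rings induces a closed immersion on spectra, identifying $\mathrm{Spec}(\mathcal{T}_{\lambda,\textbf{h}})$ with $V(I) \subseteq \mathrm{Spec}(\mathcal{T}_{\mathcal{U},\textbf{h}})$ for $I$ the kernel; in particular \eqref{eq:bij} is automatically injective.

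It remains to prove \eqref{eq:bij} is surjective, i.e. that $I$ is contained in the nilradical of $\mathcal{T}_{\mathcal{U},\textbf{h}}$, equivalently that $\mathcal{T}_{\mathcal{U},\textbf{h}}$ and $\mathcal{T}_{\lambda,\textbf{h}}$ have the same minimal primes; this is the heart of the matter. The strategy is to show no minimal prime is lost under specialisation at $\lambda$: given a minimal prime $\mathfrak{p}$ of $\mathcal{T}_{\mathcal{U},\textbf{h}}$, its contraction to $\Lambda$ is a prime contained in $\mathfrak{m}_{\lambda}$ once $\mathcal{U}$ has been chosen small enough about $\lambda$ (using the shrinking lemmas of Section \ref{sec:refinedslope}), so Nakayama's lemma applied to the finite $\Lambda_{\mathfrak{m}_{\lambda}}$-module $(\mathcal{T}_{\mathcal{U},\textbf{h}}/\mathfrak{p})_{\mathfrak{m}_{\lambda}}$ shows it is nonzero modulo $\mathfrak{m}_{\lambda}$; applying Corollary \ref{cor:specseq} the associated idempotent is then detected on $C^{\bullet}_{\lambda}$ and hence gives a minimal prime of $\mathcal{T}_{\lambda,\textbf{h}}$ lying above $\mathfrak{p}$. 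Conversely, surjectivity of the ring map already shows every minimal prime of $\mathcal{T}_{\lambda,\textbf{h}}$ is the contraction of one of $\mathcal{T}_{\mathcal{U},\textbf{h}}$. The hard part will be this last ``no component is lost'' step: one must use precisely that $\textbf{h}$ is non-critical and $\mathcal{U}$ is $\textbf{h}$-adapted, so that the control isomorphism of Corollary \ref{cor:specseq} holds on the nose and carries every slope-$\leq\textbf{h}$ eigensystem over the disc faithfully to weight $\lambda$; I would expect the cleanest route to be to run the argument after localising (or completing) $\mathcal{T}_{\mathcal{U},\textbf{h}}$ and $\Lambda$ at $\mathfrak{m}_{\lambda}$, invoking faithful flatness of completion together with the perfectness of $C^{\bullet}_{\mathcal{U}}$ to compare the two endomorphism algebras fibrewise.
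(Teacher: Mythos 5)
The paper's own proof is a one-line citation to Ash--Stevens, so you are attempting a genuinely independent argument; the injectivity half of your argument is fine, but the surjectivity half contains a real error that I don't think can be patched within the framework you set up.

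The issue is the reduction of surjectivity of $\mathrm{Spec}(\mathcal{T}_{\lambda,\mathbf{h}})\to\mathrm{Spec}(\mathcal{T}_{\mathcal{U},\mathbf{h}})$ to ``$I\subseteq\mathrm{nil}(\mathcal{T}_{\mathcal{U},\mathbf{h}})$''. That reduction is correct logic, but the containment is essentially never true here. Because $\mathcal{T}_{\lambda,\mathbf{h}}$ is a $\mathbb{Q}_p$-algebra and the composite $\Lambda:=\Lambda_G(\mathcal{U})[1/p]\to\mathcal{T}_{\mathcal{U},\mathbf{h}}\to\mathcal{T}_{\lambda,\mathbf{h}}$ factors through $\Lambda/\mathfrak{m}_\lambda$, one automatically has $\mathfrak{m}_\lambda\mathcal{T}_{\mathcal{U},\mathbf{h}}\subseteq I$. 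If $n>0$ the ring $\Lambda$ has positive Krull dimension and $\mathcal{T}_{\mathcal{U},\mathbf{h}}$ is finite over it, so $\mathfrak{m}_\lambda\mathcal{T}_{\mathcal{U},\mathbf{h}}$ is not nilpotent and $I\not\subseteq\mathrm{nil}$. Concretely, this is also visible in your minimal-prime step: a minimal prime $\mathfrak{p}$ of the finite $\Lambda$-algebra $\mathcal{T}_{\mathcal{U},\mathbf{h}}$ contracts to a minimal prime of $\Lambda$ (generically $(0)$ if $\Lambda$ is a domain), while any prime in the image of $\mathrm{Spec}(\mathcal{T}_{\lambda,\mathbf{h}})$ must contract to $\mathfrak{m}_\lambda$; so $\mathfrak{p}$ cannot be hit. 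The Nakayama step you describe shows $(\mathcal{T}_{\mathcal{U},\mathbf{h}}/\mathfrak{p})\otimes_\Lambda\Lambda/\mathfrak{m}_\lambda\neq 0$, which yields a maximal ideal of $\mathcal{T}_{\mathcal{U},\mathbf{h}}$ above $\mathfrak{m}_\lambda$ \emph{containing} $\mathfrak{p}$, but does not produce a prime of $\mathcal{T}_{\lambda,\mathbf{h}}$ pulling back \emph{to} $\mathfrak{p}$. Shrinking $\mathcal{U}$ does not help: it removes far-away points of $\mathrm{MaxSpec}$ but does not change the generic points of the irreducible components.

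What the cited result actually supplies, and what you should aim for, is a bijection at the level of the fibre over $\lambda$: namely that the surjection $\mathcal{T}_{\mathcal{U},\mathbf{h}}\twoheadrightarrow\mathcal{T}_{\lambda,\mathbf{h}}$ induces an isomorphism after passing to reductions (or equivalently a bijection between maximal ideals of $\mathcal{T}_{\mathcal{U},\mathbf{h}}$ lying over $\mathfrak{m}_\lambda$ and maximal ideals of $\mathcal{T}_{\lambda,\mathbf{h}}$), i.e.\ a bijection of systems of Hecke eigenvalues at weight $\lambda$. For that, your main tools are exactly the right ones: perfectness of $C^\bullet_{\mathcal{U}}$ over $\Lambda$, the base-change identity for $\mathrm{RHom}$, and Corollary~\ref{cor:specseq}. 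The clean route is to localize at a maximal ideal $\mathfrak{m}\subset\mathcal{T}_{\mathcal{U},\mathbf{h}}$ over $\mathfrak{m}_\lambda$, use the control theorem to identify $(\mathcal{T}_{\mathcal{U},\mathbf{h}})_{\mathfrak{m}}\otimes_\Lambda\Lambda/\mathfrak{m}_\lambda$ with the corresponding summand of $\mathcal{T}_{\lambda,\mathbf{h}}$, and then Nakayama shows this summand is nonzero, giving surjectivity on the fibre. But as a statement about the full $\mathrm{Spec}$ it is not a bijection, and you should not try to make the minimal-prime argument work.
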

\begin{proof}
This is \cite[Theorem 6.2.1(ii)]{Ashsteve}.
\end{proof}

\begin{lemma} \label{lem:disappear}
Suppose $\lambda \in \mathcal{U} \cap X^{\bullet}_{+}(S_{G}/S_{G}^{0})^{Q_{H}^{0}}$ and  $\mathfrak{m}_{\lambda} \subset \mathcal{T}_{\lambda,\textbf{h}}$ is a maximal ideal such that
$$
    R\Gamma(Y_{G}(K), A^{\mathrm{an}}_{\lambda, m})[1/p]^{\leq \textbf{h}}_{\mathfrak{m}_{\lambda}}
$$
has cohomology concentrated in some degree $j \geq 0$. If $\mathfrak{M}_{\mathcal{U}}$ is the image of $\mathfrak{m}_{\lambda}$ under the identification \eqref{eq:bij} then 
$$
    R\Gamma(Y_{G}(K), A^{\mathrm{an}}_{\mathcal{U}, m})_{\mathfrak{M}_{\mathcal{U}}}^{\leq \textbf{h}}
$$
is quasi-isomorphic to a complex of projective $(\mathcal{T}_{\mathcal{U}, \textbf{h}})_{\mathfrak{M}_{\mathcal{U}}}$ modules with cohomology concentrated in degree $j$.
\end{lemma}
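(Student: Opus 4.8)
The plan is to realise the slope-$\leq\textbf{h}$ cohomology over $\mathcal{U}$ by a perfect complex over the weight algebra, base change it to the local ring at $\lambda$, read the hypothesis off the closed fibre, and conclude by a minimal-complex argument; the hard part will be promoting freeness over the weight algebra to projectivity over the Hecke algebra. First I would fix a model: since $\mathcal{U}$ is $\textbf{h}$-adapted, the refined slope decomposition of Section~\ref{sec:refinedslope} represents $C^{\bullet} := R\Gamma(\bar{Y}_{G}(K), A^{\mathrm{an}}_{\mathcal{U}, m})[1/p]^{\leq \textbf{h}}$ by a bounded complex of finite projective $A$-modules, $A := \Lambda_{G}(\mathcal{U})[1/p]$, carrying a faithful action of $\mathcal{T}_{\mathcal{U}, \textbf{h}}$, which is finite over $A$. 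Write $\mathfrak{m}^{A}_{\lambda} \subset A$ for the maximal ideal determined by $\lambda$, with residue field $L$, and $\mathcal{A} := A_{\mathfrak{m}^{A}_{\lambda}}$; since $A \cong \mathcal{O}_{E}[[t_{1},\dots,t_{n}]][1/p]$ is regular, $\mathcal{A}$ is a regular local ring. The prime $\mathfrak{M}_{\mathcal{U}}$, being the preimage of $\mathfrak{m}_{\lambda}$ under $\mathcal{T}_{\mathcal{U}, \textbf{h}} \to \mathcal{T}_{\lambda, \textbf{h}}$, contracts to $\mathfrak{m}^{A}_{\lambda}$, so $R_{\mathcal{U}} := (\mathcal{T}_{\mathcal{U}, \textbf{h}})_{\mathfrak{M}_{\mathcal{U}}}$ is a Noetherian local ring finite over $\mathcal{A}$. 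Localising $C^{\bullet}$ at $\mathfrak{M}_{\mathcal{U}}$ then gives a bounded complex $C^{\bullet}_{\mathfrak{M}_{\mathcal{U}}}$ whose terms, viewed as $\mathcal{A}$-modules, are finitely generated (being finite over $R_{\mathcal{U}}$) and flat (being localisations of projective $A$-modules), hence finite free over $\mathcal{A}$; thus $C^{\bullet}_{\mathfrak{M}_{\mathcal{U}}}$ is a perfect complex of $\mathcal{A}$-modules.

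Next I would identify its closed fibre with the hypothesis. By Corollary~\ref{cor:specseq} there is a quasi-isomorphism $C^{\bullet}\otimes^{L}_{A}L \simeq R\Gamma(\bar{Y}_{G}(K), V_{\lambda,\mathcal{O}})[1/p]^{\leq\textbf{h}}\otimes_{\mathbb{Q}_{p}}L$ compatible with the $\mathbb{T}^{-}_{S,p}$-action. Since this action on the right factors through $\mathcal{T}_{\lambda,\textbf{h}}$ and the identification~\eqref{eq:bij} is induced by $\mathcal{T}_{\mathcal{U},\textbf{h}}\to\mathcal{T}_{\lambda,\textbf{h}}$, localising at $\mathfrak{M}_{\mathcal{U}}$ on the left matches localising at $\mathfrak{m}_{\lambda}$ on the right; as the terms of $C^{\bullet}_{\mathfrak{M}_{\mathcal{U}}}$ are flat over $A$ we have $C^{\bullet}_{\mathfrak{M}_{\mathcal{U}}}\otimes^{L}_{\mathcal{A}}L = C^{\bullet}_{\mathfrak{M}_{\mathcal{U}}}\otimes_{\mathcal{A}}L$, and the above yields
$$
    C^{\bullet}_{\mathfrak{M}_{\mathcal{U}}}\otimes_{\mathcal{A}}L \;\simeq\; R\Gamma(\bar{Y}_{G}(K), V_{\lambda,\mathcal{O}})[1/p]^{\leq\textbf{h}}_{\mathfrak{m}_{\lambda}}\otimes_{\mathbb{Q}_{p}}L,
$$
which is concentrated in degree $q$ by hypothesis.

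I would then deduce concentration. Representing the perfect complex $C^{\bullet}_{\mathfrak{M}_{\mathcal{U}}}$ over the regular local ring $\mathcal{A}$ by a minimal complex $F^{\bullet}$ of finite free $\mathcal{A}$-modules (one with $d(F^{\bullet})\subseteq\mathfrak{m}_{\mathcal{A}}F^{\bullet}$, which exists since $\mathcal{A}$ is local Noetherian and $C^{\bullet}_{\mathfrak{M}_{\mathcal{U}}}$ is perfect over it), the differentials of $F^{\bullet}\otimes_{\mathcal{A}}L$ vanish, so $\mathrm{rk}_{\mathcal{A}}F^{i} = \dim_{L}H^{i}(C^{\bullet}_{\mathfrak{M}_{\mathcal{U}}}\otimes_{\mathcal{A}}L)$, which is $0$ for $i\neq q$ by the previous paragraph. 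Hence $F^{i}=0$ for $i\neq q$ and $C^{\bullet}_{\mathfrak{M}_{\mathcal{U}}}\simeq F^{q}[-q]$ with $F^{q}=H^{q}(C^{\bullet}_{\mathfrak{M}_{\mathcal{U}}})$ finite free over $\mathcal{A}$; in particular the complex is concentrated in degree $q$.

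The main obstacle is to replace ``finite free over $\mathcal{A}$'' by ``finite projective over $R_{\mathcal{U}}$'' in the conclusion. The natural approach is to rerun the minimal-complex argument over $R_{\mathcal{U}}$: a perfect complex of $R_{\mathcal{U}}$-modules quasi-isomorphic to $C^{\bullet}_{\mathfrak{M}_{\mathcal{U}}}$ has minimal model with $i$-th term of rank $\dim_{\kappa}H^{i}(C^{\bullet}_{\mathfrak{M}_{\mathcal{U}}}\otimes^{L}_{R_{\mathcal{U}}}\kappa)$, $\kappa$ the residue field of $R_{\mathcal{U}}$, so the assertion would follow once one knows (i) that $C^{\bullet}_{\mathfrak{M}_{\mathcal{U}}}$ is perfect over $R_{\mathcal{U}}$ — it is automatically pseudo-coherent, so the only issue is finite Tor-dimension — and (ii) that this derived fibre over $R_{\mathcal{U}}$ is concentrated in degree $q$, which as above reduces to the freeness of $H^{q}(\bar{Y}_{G}(K),V_{\lambda,\mathcal{O}})[1/p]^{\leq\textbf{h}}_{\mathfrak{m}_{\lambda}}\otimes_{\mathbb{Q}_{p}}L$ over $(\mathcal{T}_{\mathcal{U},\textbf{h}}\otimes_{A}L)_{\mathfrak{M}_{\mathcal{U}}}$. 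Both (i) and (ii) are immediate if the eigenvariety is unramified over weight space at the point cut out by $\mathfrak{M}_{\mathcal{U}}$, so that $R_{\mathcal{U}}=\mathcal{A}$ and $\kappa=L$, in which case the previous paragraph already finishes the proof. I expect this unramifiedness (or a weaker input, such as a Gorenstein property of $R_{\mathcal{U}}$ combined with Poincar\'e duality) to be the genuine extra ingredient: it is exactly the sort of property encoded in the ``niceness'' hypotheses imposed in the applications, and is where the real work lies; the constructions of the first three paragraphs are routine homological algebra.
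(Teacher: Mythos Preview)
Your argument is essentially the one the paper has in mind: its proof is the single sentence ``This follows from a standard argument using the Tor spectral sequence arising as a consequence of Corollary~\ref{cor:specseq}, and Nakayama's lemma,'' and your minimal-complex argument over the regular local weight ring $\mathcal{A}=\Lambda_{G}(\mathcal{U})[1/p]_{\mathfrak{m}^{A}_{\lambda}}$ is precisely an implementation of that standard argument. Concentration in degree $q$ and freeness of $H^{q}$ over $\mathcal{A}$ follow exactly as you describe.

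You have also put your finger on a genuine issue: the stated conclusion asks for projectivity over $R_{\mathcal{U}}=(\mathcal{T}_{\mathcal{U},\textbf{h}})_{\mathfrak{M}_{\mathcal{U}}}$, not merely over $\mathcal{A}$, and the Tor/Nakayama argument does not deliver this without further input. The paper's one-line proof does not address this point either. In the only place the lemma is invoked (Proposition~\ref{prop:vanish}) what is used is the concentration in degree $q$; the local freeness over the eigenvariety is then obtained there under the additional ``really nice'' hypothesis, which includes exactly the \'etaleness of the weight map that you isolate as the missing ingredient. So your diagnosis is correct: the part of the lemma that is actually consumed downstream is what your first three paragraphs prove, while the sharper projectivity-over-$R_{\mathcal{U}}$ clause, taken literally, seems to anticipate an \'etaleness assumption that is only introduced afterwards.
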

\begin{proof}
This follows from the Lemma 2.9 of \cite{barrera2021p}.
\end{proof}

\section{Classes in Galois cohomology} \label{sec:classesingalois}
We give a recipe for mapping \'etale classes into Galois cohomology. From now on we assume that $\mathcal{G}, \mathcal{H}$ admit compatible Shimura data and set $q:= \frac{1}{2}\mathrm{dim}_{\R}Y_{G}(K_{p})$ for some (hence any) choice of open-compact subgroup $K_{p} \subset G(\Zp)$, where as usual we leave the choice of prime-to-$p$ level subgroup implicit. 
\subsection{Bits of eigenvarieties and families of Galois representations} \label{sec:eigenvar}

Let $\textbf{h} \in \mathbb{Q}^{n}_{\geq 0}$. Consider the total cohomology $H^{\bullet}(Y_{G}(K), A^{\mathrm{an}}_{\mathcal{U},m})[1/p]^{\leq \textbf{h}}$ for an $\textbf{h}$-adapted wide-open disc $\mathcal{U}$.
\begin{definition}
For $\mathcal{U}, \textbf{h}$ as above define $ \mathcal{E}_{\mathcal{U}, \textbf{h}}$ to be the quasi-Stein rigid space with global sections
\begin{align*}
    \mathcal{O}(\mathcal{E}_{\mathcal{U}, \textbf{h}}) := \mathcal{T}_{\mathcal{U}, \textbf{h}} \otimes_{\Lambda_{\mathcal{U}}[1/p]} \mathcal{O}(\mathcal{U}).
\end{align*}
The structure morphism
$$
    \textbf{w}: \mathcal{E}_{\mathcal{U}, \textbf{h}} \to \mathcal{W}_{G}
$$
is finite and we refer to it as the \textit{weight map}.
\end{definition}
A point $x \in \mathcal{E}_{\mathcal{U}, \textbf{h}}$ corresponds to a maximal ideal $\mathfrak{m}_{x}$ of the Hecke algebra $\mathbb{T}_{S, p}^{-}\otimes \mathcal{O}(\mathcal{U})$ such that the eigensystem 
$$
    \mathbb{T}_{S, p}^{-} \otimes \mathcal{O}(\mathcal{U}) \to \mathbb{T}_{S, p}^{-} \otimes \mathcal{O}(\mathcal{U})/\mathfrak{m}_{x} =: L
$$
occurs in $H^{\bullet}(Y_{G}(K), A^{\mathrm{an}}_{\textbf{w}(x), m})[1/p]^{\leq \textbf{h}} \otimes L$ i.e.
$$
    \left(H^{\bullet}(Y_{G}(K), A^{\mathrm{an}}_{\textbf{w}(x)})[1/p]^{\leq \textbf{h}} \otimes L\right)_{\mathfrak{m}_{x}} \neq 0.
$$
By a mild abuse of notation we write $x \in \mathcal{E}_{\mathcal{U}, \textbf{h}}(L)$, identifying $x$ with the morphism $\operatorname{Spm}(L) \to \mathcal{E}_{\mathcal{U}, \textbf{h}}$ mapping the unique point of $\operatorname{Spm}(L)$ to $x$. 
\begin{definition}
Let $\textbf{h} \in \Q^{n}_{\geq 0}$ and  $x \in  \mathcal{E}_{\mathcal{U}, \textbf{h}}(L)$.
\begin{itemize}
\item We say $x$ is \textit{classical} if $\textbf{w}(x)$ is the restriction of a dominant algebraic character of $T_{G}$ and the associated eigensystem occurs in $H^{\bullet}(Y_{G}(K), V^{G}_{\textbf{w}(x)}) \otimes_{\Qp}L$. 
\item We say $x$ is of non-critical slope if $\textbf{h}_{x} = \left(v_{p}\left(U_{1}(x)\right), \ldots, v_{p}\left(U_{n}(x)\right)\right)$ is a non-critical slope.
\item We say that $x$ is non-critical if it has classical weight and the corresponding maximal ideal $\mathfrak{m}_{x} \subset \mathcal{T}_{\mathcal{U}, \textbf{h}}$ is non-critical. 
\end{itemize}
\end{definition}
\begin{remark}
Theorem \ref{thm:classical} says that points $x \in \mathcal{E}_{\mathcal{U}, \textbf{h}}$ for whom  $\textbf{h}_{x} = \left(v_{p}\left(U_{1}(x)\right), \ldots, v_{p}\left(U_{n}(x)\right)\right)$ is a non-critical slope are non-critical and that non-critical points are classical. 
\end{remark}
\begin{definition} \label{def:reallynice}
We say a classical point $x \in \mathcal{E}_{\mathcal{U}, \textbf{h}}(L)$ is \textit{really nice} if it is non-critical and
$$
    \left (H^{\bullet}(Y_{G}(K), V^{G}_{\textbf{w}(x)})[1/p]\otimes_{\Qp}L\right)_{\mathfrak{m}_{x}} = \left(H^{q}(Y_{G}(K), V^{G}_{\textbf{w}(x)})[1/p] \otimes_{\Qp} L \right)_{\mathfrak{m}_{x}},
$$ 
i.e. the localisation at $\mathfrak{m}_{x}$ is concentrated in the middle degree. The localised cohomology is then a finite free $\mathcal{O}(\mathcal{E}_{\mathcal{U}, \textbf{h}})_{\mathfrak{m}_{x}}$-module and the weight map is \'etale at $x$.
\end{definition}
A quasi-Stein rigid space $X$ with structure sheaf $\mathcal{O}$ has, by definition, an admissible covering by an increasing sequence of affinoid subspaces 
$$
    X_{1} \subset \cdots X_{n} \subset X_{n + 1} \subset \cdots \subset X.
$$
such that the induced maps $\mathcal{O}(X_{n + 1}) \to \mathcal{O}(X_{n})$ have dense image for $n \geq 1$. An $\mathcal{O}(X)$-module $M$ is called \textit{coadmissible} if $M = \varprojlim_{n} M_{n}$, where $M_{n}$ is a finitely generated module over $\mathcal{O}(X_{n})$ and the natural maps $M_{n + 1} \otimes \mathcal{O}(X_{n}) \to M_{n}$ coming from the inverse system, are isomorphisms. There is an equivalence of categories between coadmissible $\mathcal{O}(X)$-modules and coherent sheaves on $X$. 
\begin{proposition}
There is a complex of coherent sheaves $M^{\bullet}_{\mathcal{U},h}$ over $\mathcal{E}_{\mathcal{U}, \textbf{h}}$ whose global sections are given by 
$$
     \mathscr{C}^{\bullet}(Y_{G}(K), A^{\mathrm{an}}_{\mathcal{U}, m})[1/p]^{\leq \textbf{h}} \otimes_{\Lambda_{\mathcal{U}}[1/p]} \mathcal{O}(\mathcal{U}). 
$$
\end{proposition}
\begin{proof}
Since $\mathcal{U}$ is quasi-Stein it admits an admissible covering
$$
    \mathcal{U}_{1} \subset \cdots \subset \mathcal{U}
$$
by affinoid subspaces satisfying the condition that the maps $\mathcal{O}(\mathcal{U}_{n + 1}) \to \mathcal{O}(\mathcal{U}_{n})$ have dense image. Let $u \in A^{--}$ and write 
\begin{align*}
    M &:= \mathscr{C}^{\bullet}(Y_{G}(K), A^{\mathrm{an}}_{\mathcal{U}, m})[1/p]\otimes_{\Lambda_{\mathcal{U}}[1/p]} \mathcal{O}(\mathcal{U}).
\end{align*}
Note that $M^{\leq \textbf{h}} =  \mathscr{C}^{\bullet}(Y_{G}(K), A^{\mathrm{an}}_{\mathcal{U}, m})[1/p]^{\leq \textbf{h}} \otimes_{\Lambda_{\mathcal{U}}[1/p]} \mathcal{O}(\mathcal{U})$ since taking slope $\leq \textbf{h}$ parts commutes with taking tensor products with trivial $u$-modules. 
Let 
$$
M_{n} :=  M \otimes_{\mathcal{O}(\mathcal{U})} \mathcal{O}(\mathcal{U}_{n}) =  \mathscr{C}^{\bullet}(Y_{G}(K), A^{\mathrm{an}}_{\mathcal{U}, m}\otimes_{\Lambda_{\mathcal{U}}}\mathcal{O}(\mathcal{U}_{n}))
$$
and define an inverse system $\{M_{n}, \phi_{n + 1}: M_{n + 1} \to M_{n}\}_{n \geq 1}$ where $\phi_{n}$ are the natural maps. Then $M = \varprojlim_{n} M_{n}$ as $\mathcal{O}(\mathcal{U})[u]$-modules because the $\phi_{n}$ are $u$-equivariant. By Theorem \ref{thm:1} each $M_{n}^{\leq \textbf{h}}$ is a finitely generated $\mathcal{O}(\mathcal{U}_{n})$-module and thus the inverse system $\{M_{n}^{\leq \textbf{h}}\}_{n \geq 1}$ defines a coherent sheaf on $\mathcal{O}(\mathcal{U})$ with global sections $\varprojlim_{n} M_{n}^{\leq \textbf{h}}$. We claim that $M^{\leq \textbf{h}} = \varprojlim_{n} M_{n}^{\leq \textbf{h}}$. We have a direct sum decomposition 
$$
    M \cong \varprojlim_{n} M_{n} = \varprojlim_{n} \left(M_{n}\right)^{\leq \textbf{h}} \oplus \varprojlim_{n} \left(M_{n}\right)^{> \textbf{h}}
$$
Using that $u$-equivariant morphisms preserve slope decompositions we see that the natural maps $M^{\leq \textbf{h}} \to M_{n}^{ > \textbf{h}}$ and $M^{> \textbf{h}} \to M_{n}^{ \leq \textbf{h}}$ are zero and thus the natural maps $M^{\leq \textbf{h}} \hookrightarrow M \to \varprojlim_{n} M_{n}^{ \leq \textbf{h}}$ and $M^{ > \textbf{h}} \hookrightarrow M \to \varprojlim_{n} M_{n}^{ > \textbf{h}}$ are injective and thus isomorphisms since $M = M^{\leq \textbf{h}} \oplus M^{ > \textbf{h}}$. We thus have a complex of coherent sheaves $\tilde{M}^{\bullet}_{\mathcal{U}, \textbf{h}}$ over $\mathcal{U}$ and we define 
$$
    M^{\bullet}_{\mathcal{U}, \textbf{h}} := \textbf{w}^{*}\tilde{M}^{\bullet}_{\mathcal{U}, \textbf{h}}.
$$
\end{proof}
\begin{proposition} \label{prop:vanish}
Let $x \in  \mathcal{E}_{\mathcal{U}, \textbf{h}}(L)$ be a really nice point.
 Then there is an affinoid neighbourhood $x \in \mathcal{V} \subset \mathcal{E}_{\mathcal{U}, \textbf{h}}$ such that the complex of sheaves
$$
   M^{\bullet}_{\mathcal{U}, \textbf{h}}\vert_{\mathcal{V}}
$$
has cohomology concentrated in degree $q$.
\end{proposition}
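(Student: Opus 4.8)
The plan is to prove the required statement at the point $x$ (i.e. after localising at $x$) and then to spread it out over an affinoid neighbourhood. Since $\mathcal{U}$ is $\textbf{h}$-adapted, the complex $\mathscr{C}^{\bullet} := \mathscr{C}^{\bullet}(\bar{Y}_{G}(K), A^{\mathrm{an}}_{\mathcal{U}, m})[1/p]^{\leq \textbf{h}}$ is a bounded complex of finite projective $\Lambda_{G}(\mathcal{U})[1/p]$-modules carrying an action of $\mathbb{T}^{-}_{S, p}$ which factors through $\mathcal{T}_{\mathcal{U}, \textbf{h}}$ (Section~\ref{sec:refinedslope}), and by construction $M^{\bullet}_{\mathcal{U}, \textbf{h}}$ is the complex of coherent sheaves on $\mathcal{E}_{\mathcal{U}, \textbf{h}}$ attached to $\mathscr{C}^{\bullet} \otimes_{\mathcal{T}_{\mathcal{U}, \textbf{h}}} \mathcal{O}(\mathcal{E}_{\mathcal{U}, \textbf{h}})$. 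As $\mathcal{O}(\mathcal{E}_{\mathcal{U}, \textbf{h}})$ is Noetherian the cohomology sheaves $\mathcal{H}^{i}(M^{\bullet}_{\mathcal{U}, \textbf{h}})$ are coherent, so it suffices to show that $\mathcal{H}^{i}(M^{\bullet}_{\mathcal{U}, \textbf{h}})_{x} = 0$ for $i \ne q$ and that $\mathcal{H}^{q}(M^{\bullet}_{\mathcal{U}, \textbf{h}})_{x}$ is a free $\mathcal{O}_{\mathcal{E}_{\mathcal{U}, \textbf{h}}, x}$-module: the support of a coherent sheaf is Zariski-closed and a coherent sheaf with a free stalk is locally free near that point, so both conditions persist on an affinoid neighbourhood $\mathcal{V} \ni x$, and over such a $\mathcal{V}$ the complex $M^{\bullet}_{\mathcal{U}, \textbf{h}}|_{\mathcal{V}}$ has cohomology concentrated in degree $q$ and is therefore quasi-isomorphic to the complex of locally free sheaves $\mathcal{H}^{q}(M^{\bullet}_{\mathcal{U}, \textbf{h}})|_{\mathcal{V}}[-q]$.

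Write $\lambda := \textbf{w}(x)$, a dominant algebraic weight because $x$ is classical, and $A := \mathcal{O}_{\mathcal{E}_{\mathcal{U}, \textbf{h}}, x}$. The crucial step is the computation of the derived fibre $(M^{\bullet}_{\mathcal{U}, \textbf{h}})_{x} \otimes^{L}_{A} k(x)$. Since $\textbf{w}$ is \'etale at $x$, the ring $A$ is a regular local ring, it is flat over the regular local ring $\mathcal{O}_{\mathcal{W}_{G}, \lambda}$, and $A / \mathfrak{m}_{\lambda} A = k(x)$ where $\mathfrak{m}_{\lambda}$ is the maximal ideal of $\mathcal{O}_{\mathcal{W}_{G}, \lambda}$. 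Feeding this, the projectivity of $\mathscr{C}^{\bullet}$ over $\Lambda_{G}(\mathcal{U})[1/p]$, and a base-change spectral sequence into Corollary~\ref{cor:specseq} — which is where the non-criticality of $\textbf{h}$, hence Theorem~\ref{thm:classical}, is used — one obtains a Hecke-equivariant quasi-isomorphism between $(M^{\bullet}_{\mathcal{U}, \textbf{h}})_{x} \otimes^{L}_{A} k(x)$ and $R\Gamma(\bar{Y}_{G}(K), V_{\lambda, \mathcal{O}})[1/p]^{\leq \textbf{h}}_{x} \otimes_{\mathbb{Q}_{p}} k(x)$, the subscript $x$ now denoting localisation at the Hecke eigensystem of $x$. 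By the really nice hypothesis at $x$, the complex $R\Gamma(\bar{Y}_{G}(K), V_{\lambda, \mathcal{O}})[1/p]^{\leq \textbf{h}}_{x}$ is quasi-isomorphic to a single (free) module placed in degree $q$, so the derived fibre above is concentrated in degree $q$.

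Since $A$ is a regular local ring the bounded complex $(M^{\bullet}_{\mathcal{U}, \textbf{h}})_{x}$ of finite $A$-modules is perfect, hence is quasi-isomorphic to a minimal bounded complex $P^{\bullet}$ of finite free $A$-modules, i.e. one whose differentials lie in $\mathfrak{m}_{A} P^{\bullet}$. Then $P^{i} \otimes_{A} k(x) = \mathcal{H}^{i}( (M^{\bullet}_{\mathcal{U}, \textbf{h}})_{x} \otimes^{L}_{A} k(x) )$, which vanishes for $i \ne q$ by the previous paragraph; by Nakayama $P^{i} = 0$ for $i \ne q$, and therefore $(M^{\bullet}_{\mathcal{U}, \textbf{h}})_{x} \simeq P^{q}[-q]$ with $P^{q}$ finite free over $A$. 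This is precisely the vanishing and freeness at $x$ required in the first paragraph. (This argument is the one proving Lemma~\ref{lem:disappear}, run here at the point $x$ of $\mathcal{E}_{\mathcal{U}, \textbf{h}}$ rather than at a maximal ideal of $\mathcal{T}_{\lambda, \textbf{h}}$; alternatively, since $\mathcal{T}_{\lambda, \textbf{h}}$ is Artinian the really nice hypothesis yields the hypothesis of Lemma~\ref{lem:disappear} verbatim, and one then base-changes its conclusion along $\mathcal{T}_{\mathcal{U}, \textbf{h}} \to A$.)

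The step I expect to be the main obstacle is the identification of the derived fibre in the second paragraph: one must correctly reconcile the specialisation in the weight variable provided by Corollary~\ref{cor:specseq} with the localisation at the Hecke eigensystem $x$, and it is here that \'etaleness of $\textbf{w}$ at $x$ is genuinely needed — it makes $A$ regular, so that $(M^{\bullet}_{\mathcal{U}, \textbf{h}})_{x}$ is perfect and the minimal-complex argument runs, and it gives $A/\mathfrak{m}_{\lambda}A = k(x)$, so that the derived fibre at $x$ is actually computed by the classical complex at weight $\lambda$. Everything after the fibre is understood is routine homological algebra.
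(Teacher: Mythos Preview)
Your proof is correct and follows essentially the same route as the paper: invoke Lemma~\ref{lem:disappear} (or, as you do, rerun its Tor-spectral-sequence-plus-Nakayama argument directly at the point $x$) to see that the stalk is quasi-isomorphic to a single free module in degree $q$, and then spread this out over an affinoid neighbourhood by coherence. The paper's own proof is two lines, citing Lemma~\ref{lem:disappear} and coherence; you have unpacked both steps carefully, made explicit the role of \'etaleness of $\textbf{w}$ (regularity of $A$ and $A/\mathfrak{m}_{\lambda}A=k(x)$), and spelled out the minimal-complex argument that the paper leaves as ``standard''.
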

\begin{proof}
 By Lemma \ref{lem:disappear} the stalk of $M^{\bullet}_{\mathcal{U}, \textbf{h}}$ at $x$ is quasi-isomorphic to a complex concentrated in degree $q$. By coherence we can find an affinoid $\mathcal{V} \subset \mathcal{U}$ containing $x$ such that 
$$
    M^{\bullet}_{\mathcal{U}, \textbf{h}}\vert_{\mathcal{V}}
$$
has cohomology concentrated in degree $q$.
\end{proof}

\begin{definition} \label{def:righteous}
We say an affinoid $\mathcal{V} \subset \mathcal{E}_{\mathcal{U}, \textbf{h}}$ is \textit{righteous} if the restriction of the weight map to $\mathcal{V}$ is an isomorphism onto its image and $M^{\bullet}_{\mathcal{U}, \textbf{h}}\vert_{\mathcal{V}}$ has cohomology concentrated in degree $q$. 
\end{definition}

Clearly a subaffinoid of a righteous affinoid is itself righteous. 
\begin{lemma}
If $x \in \mathcal{E}_{\mathcal{U}, \textbf{h}}$ is really nice then it has an affinoid neighbourhood $\mathcal{V}$ which is righteous. 
\end{lemma}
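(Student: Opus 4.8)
The statement follows by combining Proposition \ref{prop:vanish} with the definition of ``really nice'' and elementary properties of finite morphisms of rigid spaces. First I would unpack what has to be achieved: I need an affinoid $\mathcal{V} \ni x$ on which (i) the weight map $\textbf{w}$ restricts to an isomorphism onto its image, and (ii) $M^{\bullet}_{\mathcal{U},\textbf{h}}\vert_{\mathcal{V}}$ is quasi-isomorphic to a complex of locally free sheaves concentrated in a single degree. Condition (ii) is essentially handed to us: Proposition \ref{prop:vanish} already produces an affinoid neighbourhood $\mathcal{V}_{1} \ni x$ on which $M^{\bullet}_{\mathcal{U},\textbf{h}}\vert_{\mathcal{V}_{1}}$ is quasi-isomorphic to a complex of locally free sheaves concentrated in degree $q$ (note $f$ and $q$ agree here; the discrepancy in the definition of ``pretty sweet'' is just notation). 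So the real content is condition (i).

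For condition (i), I would use that ``really nice'' requires the weight map $\textbf{w}\colon \mathcal{E}_{\mathcal{U},\textbf{h}} \to \mathcal{W}_{G}$ to be \'etale at $x$. Since $\textbf{w}$ is moreover finite (it is the map of rigid spaces attached to the finite $\Lambda_{G}(\mathcal{U})[1/p]$-algebra $\mathcal{T}_{\mathcal{U},\textbf{h}}$, after the base change to $\mathcal{O}(\mathcal{U})$), being \'etale at $x$ means that there is an affinoid $\mathcal{V}_{2} \ni x$ and an affinoid $\mathcal{V}_{2}' \ni \textbf{w}(x)$ in $\mathcal{W}_{G}$ such that $\textbf{w}$ restricts to a finite \'etale map $\mathcal{V}_{2} \to \mathcal{V}_{2}'$ of some constant degree $d$. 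A finite \'etale map of degree $d$ is an isomorphism onto its image precisely when $d = 1$; and after possibly shrinking $\mathcal{V}_{2}$ further so that $\mathcal{V}_{2}$ meets no other point of the fibre $\textbf{w}^{-1}(\textbf{w}(x))$ (possible since the fibre is finite and $\mathcal{E}_{\mathcal{U},\textbf{h}}$ is separated, indeed quasi-Stein), the connected component of $\textbf{w}^{-1}(\mathcal{V}_{2}')$ containing $x$ maps isomorphically onto $\mathcal{V}_{2}'$. Concretely: shrink $\mathcal{V}_{2}'$ to a connected affinoid over which the \'etale cover $\textbf{w}^{-1}(\mathcal{V}_{2}') \to \mathcal{V}_{2}'$ splits as a disjoint union of copies of $\mathcal{V}_{2}'$ indexed by the geometric fibre over $\textbf{w}(x)$; then take $\mathcal{V}$ to be the component containing $x$. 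On this $\mathcal{V}$ the weight map is an isomorphism onto $\mathcal{V}_{2}'$.

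Finally I would take $\mathcal{V}$ to be a common affinoid subdomain of $\mathcal{V}_{1}$ and the $\mathcal{V}$ produced in the previous paragraph, containing $x$; such a subdomain exists because affinoid subdomains of a separated rigid space are stable under intersection. By the remark immediately preceding the lemma (``a subaffinoid of a pretty sweet affinoid is also pretty sweet''), the relevant properties are inherited: the restriction of the weight map to $\mathcal{V}$ is still an isomorphism onto its image, and $M^{\bullet}_{\mathcal{U},\textbf{h}}\vert_{\mathcal{V}}$ is still quasi-isomorphic to a complex of locally free sheaves concentrated in degree $q = f$. Hence $\mathcal{V}$ is pretty sweet, as required. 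The only mild subtlety — and the step I would be most careful about — is the passage from ``\'etale at $x$'' plus ``finite'' to ``isomorphism onto its image on a neighbourhood'': this needs the finiteness of $\textbf{w}$ (to control the rest of the fibre over $\textbf{w}(x)$) together with the fact that a finite \'etale morphism of degree one is an open immersion, both of which are standard for rigid analytic spaces.
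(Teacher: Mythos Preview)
Your proposal is correct and follows essentially the same approach as the paper: the paper's proof is a single sentence stating that the result follows immediately from the weight map being \'etale at really nice points together with Proposition~\ref{prop:vanish}, and your argument is precisely a careful unpacking of this. You correctly identify that Proposition~\ref{prop:vanish} handles the concentration-in-degree-$q$ condition, that \'etaleness plus finiteness of $\textbf{w}$ yields the local isomorphism onto the image, and that intersecting the two affinoid neighbourhoods gives the desired $\mathcal{V}$.
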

\begin{proof}
This follows immediately from the weight map being \'etale at really nice points and Proposition \ref{prop:vanish}.
\end{proof}

\begin{definition} \label{def:colemanfam}
Suppose $x \in \mathcal{E}_{\mathcal{U}, \textbf{h}}$ is a really nice point and let $\underline{\Pi} \subset \mathcal{E}_{\mathcal{U}, \textbf{h}}$ be a neighbourhood of $x$ such that
$$
    \textbf{w}\vert_{\underline{\Pi}}: \underline{\Pi} \cong \textbf{w}\left(\underline{\Pi}\right) =: \mathcal{U}(\underline{\Pi})
$$
i.e. the weight map restricts to an isomorphism.
We call such a neighbourhood a \textit{Coleman family} passing through $x$. 
\end{definition}

Suppose now that $\underline{\Pi}\subset  \mathcal{E}_{\mathcal{U}, \textbf{h}}$ is a Coleman family passing through a really nice point $x \in \mathcal{E}_{\mathcal{U}, \textbf{h}}$. By shrinking $\mathcal{U}$ we can assume that $\underline{\Pi}$ is a connected component of $\mathcal{E}_{\mathcal{U}, \textbf{h}}$ \footnote{ \textit{c.f.} \cite[Lemma and Definition 7.6.1]{bellaicheeigenbook}. The idea is that since the weight map $\textbf{w}$ is finite flat the spaces $\mathcal{E}_{\mathcal{V}, \textbf{h}}$ for $\textbf{w}(x) \in \mathcal{V} \subset \mathcal{U}$ form a basis of neighbourhoods of the finite set $\textbf{w}^{-1}\left(\textbf{w}(x)\right)$. By taking $\mathcal{V}$ sufficiently small we can ensure that the connected component of $\mathcal{E}_{\mathcal{V}, \textbf{h}}$ containing $x$ contains no other elements of $\textbf{w}^{-1}\left(\textbf{w}(x)\right)$.}.
Let $\tilde{f} \in \mathcal{T}_{\mathcal{U}, \textbf{h}}$ be an idempotent satisfying
$$
    \tilde{f} \cdot \mathcal{T}_{\mathcal{U}, \textbf{h}} \cong \mathcal{O}(\underline{\Pi}) 
$$
and $\tilde{f}\cdot H^{\bullet}(Y_{G}(K), A^{\mathrm{an}}_{\mathcal{U}, m})[1/p]^{\leq \textbf{h}} =  H^{q}(Y_{G}(K), A^{\mathrm{an}}_{\mathcal{U}, m})[1/p]^{\leq \textbf{h}} \otimes \mathcal{O}(\underline{\Pi})$ and
let $f \in \mathbb{T}^{-}_{S, p} \hat{\otimes} \mathcal{O}(\underline{\Pi})$ be a lift of $\tilde{f}$.
\begin{definition} \label{def:galrep}
Define an $\mathcal{O}(\mathcal{U}(\underline{\Pi}))$-linear Galois representation
$$
    W_{\underline{\Pi}} := H^{q}(Y_{G}(K), A^{\mathrm{Iw}}_{\mathcal{U}, m})[1/p]^{\leq \textbf{h}} \hat{\otimes}_{\mathcal{T}_{\mathcal{U}, h}}\mathcal{O}(\underline{\Pi}) = f \cdot H^{q}(Y_{G}(K), A^{\mathrm{Iw}}_{\mathcal{U}, m})[1/p]^{\leq \textbf{h}}.
$$
where we let $G_{E}$ (recall that $E$ is the reflex field of $Y_{G}$) act on $H^{q}(Y_{G}(K), A^{\mathrm{Iw}}_{\mathcal{U}, m})[1/p]^{\leq \textbf{h}}$ via the comparison isomorphism $j^{(q)}_{A^{\mathrm{Iw}}_{\mathcal{U}, m}}$ recalled in Section \ref{sec:complex}.
\end{definition}
This Galois representation is a direct summand of $H^{q}(Y_{G}(K), A^{\mathrm{Iw}}_{\mathcal{U}, m})[1/p]^{\leq \textbf{h}}$ with projector $f$.
\begin{lemma}
For $L/\Qp$, $x \in \mathcal{E}_{\mathcal{U}, \textbf{h}}(L)$ a really nice point and $\underline{\Pi}$ a Coleman family passing through $x$, we have 
$$
    W_{\underline{\Pi}} \otimes_{\mathcal{O}(\mathcal{U}(\underline{\Pi}))} L \cong H^{q}(Y_{G}(K), \mathscr{V}^{G}_{\textbf{w}(x)})_{\mathfrak{m}_{x}} \otimes L =: W_{x}.
$$
\end{lemma}

\begin{proof}
The left hand side is isomorphic to the localisation of $H^{q}(Y_{G}(K), A^{\mathrm{Iw}}_{w(x), m})[1/p]^{\leq \textbf{h}}$ at $\mathfrak{m}_{x}$ and this is equal to the right hand side by definition of non-criticality, identifying Betti and \'etale cohomology under the comparison isomorphisms $j^{(\bullet)}_{A^{\mathrm{Iw}}_{\mathcal{U}, m}}$ and $j^{(\bullet)}_{V^{G}_{\textbf{w}(x)}}$.
\end{proof}

\subsection{Abel--Jacobi maps} \label{sec:abeljac}

Let $K = K^{p}K_{p} \subset \mathcal{G}(\mathbb{A}_{f})$ be a neat open compact subgroup with $K_{p} \subset J_{G}$ and admitting an Iwahori decomposition and let $\Sigma$ be a set of primes of the reflex field $E$ such that we have an integral model $Y_{G}(K_{p})_{\Sigma}$ of $Y_{G}(K_{p})$ over $\mathcal{O}_{E}[\Sigma^{-1}]$. As usual let $\mathcal{U} \subset \mathcal{W}_{m}$ be a wide-open disc and let $\lambda \in \mathcal{U}$ be a dominant algebraic character. Let $\textbf{h} \in \mathbb{Q}_{\geq 0}^{d}$ be a non-critical slope and $x \in \mathcal{E}_{\mathcal{U}, \textbf{h}}$ a really nice point satisfying $\textbf{w}(x) = \lambda$ and with a Coleman family $\underline{\Pi}$ passing through it. There is an `Abel--Jacobi' map 
$$
    H^{q + 1}(Y_{G}(K)_{\Sigma}, \mathscr{V}_{\lambda}^{G})_{\mathfrak{m}_{x}} \to H^{1}(\mathcal{O}_{E}[\Sigma^{-1}], W_{x})   
$$
constructed in the following way: The Hochschild--Serre spectral sequence 
\begin{equation} \label{eq:hochse}
    H^{i}(\mathcal{O}_{E}[\Sigma^{-1}], H^{j}(\overline{Y}_{G}(K), \mathscr{V}^{G}_{\lambda})) \implies H^{i + j}(Y_{G}(K)_{\Sigma}, \mathscr{V}^{G}_{\lambda})
\end{equation}
has an edge map $H^{q + 1}(Y_{G}(K)_{\Sigma}, \mathscr{V}^{G}_{\lambda}) \to H^{q + 1}(\overline{Y}_{G}(K), \mathscr{V}^{G}_{\lambda})^{G_{\Q}}$. Denoting the kernel of this edge map by $H^{q + 1}(Y_{G}(K)_{\Sigma}, \mathscr{V}^{G}_{\lambda})_{0}$ there is an induced map $H^{q + 1}(Y_{G}(K)_{\Sigma}, \mathscr{V}^{G}_{\lambda})_{0} \to H^{1}(\mathcal{O}_{E}[\Sigma^{-1}], H^{q}(\overline{Y}_{G}(K), \mathscr{V}^{G}_{\lambda}))$. Localising \eqref{eq:hochse} at $\mathfrak{m}_{x}$ kills the terms $H^{i}(\mathcal{O}_{E}[
\Sigma^{-1}], H^{j}(\overline{Y}_{G}(K), \mathscr{V}^{G}_{\lambda}))$ for $i \geq 0$ and $j \neq q$ (by our assumption that $x$ is really nice) and thus we obtain a `classical' Abel--Jacobi map
$$
    \mathrm{AJ}^{\mathrm{cl}}_{\mathfrak{m}_{x}}: H^{q + 1}(Y_{G}(K)_{\Sigma}, \mathscr{V}^{G}_{\lambda})_{\mathfrak{m}_{x}} \to H^{1}(\mathcal{O}_{E}[\Sigma^{-1}], W_{x}).
$$
\begin{remark} \label{rem:rn}
The definition of $\mathrm{AJ}^{\mathrm{cl}}_{\mathfrak{m}_{x}}$ only requires that localisation at $\mathfrak{m}_{x}$ kills the groups $H^{i}(\overline{Y}_{G}(K), \mathscr{V}^{G}_{\lambda})$ outside of the middle degree i.e. we do not require any assumptions on the local behaviour of the eigenvariety at the point $x$. 
\end{remark}
By the exact same process we can define an `analytic' Abel--Jacobi map 
$$
    \mathrm{AJ}^{\mathrm{an}}_{x}: H^{q + 1}(Y_{G}(K)_{\Sigma}, \mathscr{A}^{\mathrm{Iw}}_{\lambda,m})_{\mathfrak{m}_{x}} \to H^{1}(\mathcal{O}_{E}[\Sigma^{-1}], W_{x})
$$
where we have identified $W_{x} = H^{q}(\overline{Y}_{G}(K), \mathscr{V}^{G}_{\lambda})_{\mathfrak{m}_{x}} \cong H^{q}(\overline{Y}_{G}(K), \mathscr{A}_{\lambda, m}^{\mathrm{Iw}})[1/p
]_{\mathfrak{m}_{x}}$
The main theorem of this section is the following
\begin{theorem}
Let $W_{\underline{\Pi}}$ be the Galois representation associated to $\underline{\Pi}$ as in Definition \ref{def:galrep}. There is an `overconvergent Abel--Jacobi map'
$$
    \mathrm{AJ}_{\underline{\Pi}}^{\mathrm{oc}}:H^{q + 1}(Y_{G}(K)_{
    \Sigma}, \mathscr{A}^{\mathrm{Iw}}_{\mathcal{U},m})[1/p] \to H^{1}(\mathcal{O}_{E}[\Sigma^{-1}], W_{\underline{\Pi}}),
$$
such that the following diagram commutes 
\begin{equation} \label{fig:abjac}
    \begin{tikzcd}
        H^{q + 1}(Y_{G}(K)_{
    \Sigma}, \mathscr{A}^{\mathrm{Iw}}_{\mathcal{U},m})[1/p] \arrow[r, "\mathrm{AJ}_{\underline{\Pi}}^{\mathrm{oc}}"] \arrow[d, ""] &  H^{1}(\mathcal{O}_{E}[\Sigma^{-1}], W_{\underline{\Pi}}) \arrow[d, ""] \\ 
     H^{q + 1}(Y_{G}(K)_{\Sigma}, \mathscr{A}_{\lambda, m}^{\mathrm{Iw}} )[1/p]_{\mathfrak{m}_{x}}\arrow[r, "\mathrm{AJ}_{x}^{\mathrm{an}}"] &  H^{1}(\mathcal{O}_{E}[\Sigma^{-1}], W_{x})
    \end{tikzcd}
\end{equation}
where the first vertical map is the natural specialisation map and the second is induced by the composition 
$$
    W_{\underline{\Pi}} \to H^{q}(\overline{Y}_{G}(K), \mathscr{A}_{\lambda, m}^{\mathrm{Iw}})[1/p
]_{\mathfrak{m}_{x}} \cong W_{x}. 
$$
\end{theorem}
As before the Hochschild--Serre spectral sequence gives a map
$$
    \psi_{\mathcal{U}}: H^{q + 1}(Y_{G}(K)_{\Sigma}, \mathscr{A}^{\mathrm{Iw}}_{\mathcal{U},m})[1/p]_{0} \to H^{1}(\mathcal{O}_{E}[\Sigma^{-1}], H^{q}(\overline{Y}_{G}(K), \mathscr{A}^{\mathrm{Iw}}_{\mathcal{U}, m})[1/p])
$$
where $H^{q + 1}(Y_{G}(K)_{\Sigma}, \mathscr{A}^{\mathrm{Iw}}_{\mathcal{U},m})[1/p]_{0}$ is the kernel of the base-change map
$$
    H^{q + 1}(Y_{G}(K)_{\Sigma}, \mathscr{A}^{\mathrm{Iw}}_{\mathcal{U},m})[1/p] \to H^{q + 1}(\overline{Y}_{G}(K), \mathscr{A}^{\mathrm{Iw}}_{\mathcal{U},m})[1/p].
$$
 
\begin{lemma}
Let $e^{\leq \textbf{h}} \in \Lambda_{\mathcal{U}}\{\{\mathfrak{U}_{p}^{-}\}\}$ be the slope $\leq \textbf{h}$ projector on $\mathscr{C}^{\bullet}(\overline{Y}_{G}(K), A^{\mathrm{Iw}}_{\mathcal{U}, m})[1/p]$, then 
$$
    e^{\leq \textbf{h}}H^{\bullet}(\overline{Y}_{G}(K), \mathscr{A}^{\mathrm{Iw}}_{\mathcal{U}, m})[1/p] = H^{\bullet}(\overline{Y}_{G}(K), \mathscr{A}^{\mathrm{Iw}}_{\mathcal{U}, m})[1/p]^{\leq \textbf{h}}. 
$$
\end{lemma}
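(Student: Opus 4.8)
The plan is to reduce the statement to the elementary observation that applying an idempotent chain endomorphism commutes with passage to cohomology. By construction $e^{\leq \textbf{h}}\in B\{\{\mathfrak{U}_{p}^{-}\}\}$ is idempotent, and it acts on the complex $\mathscr{C}^{\bullet}:=\mathscr{C}^{\bullet}(\bar{Y}_{G}(K),A^{\mathrm{Iw}}_{\kappa,m})[1/p]$ by a chain map, being a convergent limit of polynomials in the Hecke operators. So one obtains a decomposition of complexes $\mathscr{C}^{\bullet}=e^{\leq \textbf{h}}\mathscr{C}^{\bullet}\oplus(1-e^{\leq \textbf{h}})\mathscr{C}^{\bullet}$ and hence $H^{\bullet}(\mathscr{C}^{\bullet})=e^{\leq \textbf{h}}H^{\bullet}(\mathscr{C}^{\bullet})\oplus(1-e^{\leq \textbf{h}})H^{\bullet}(\mathscr{C}^{\bullet})$ with $e^{\leq \textbf{h}}H^{\bullet}(\mathscr{C}^{\bullet})=H^{\bullet}(e^{\leq \textbf{h}}\mathscr{C}^{\bullet})$. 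Everything then comes down to verifying that this decomposition satisfies the defining axioms of the slope $\leq \textbf{h}$ decomposition; uniqueness of such decompositions finishes the proof.

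For the finiteness side I would recall, exactly as in the discussion of the affinoid $\mathcal{V}'$ preceding the lemma but with $\kappa$ in place of $\mathcal{V}'$, that along the inclusions $\mathscr{C}^{\bullet}(A^{\mathrm{an}}_{\kappa,m-1})[1/p]\subseteq\mathscr{C}^{\bullet}(A^{\mathrm{Iw}}_{\kappa,m})[1/p]\subseteq\mathscr{C}^{\bullet}(A^{\mathrm{an}}_{\kappa,m})[1/p]$ coming from $\mathrm{LA}_{m-1}\subset\mathrm{LI}_{m}\subset\mathrm{LA}_{m}$ (here I suppress $\bar{Y}_{G}(K)$ from the notation), the idempotent $e^{\leq \textbf{h}}$ preserves the middle complex and identifies $e^{\leq \textbf{h}}\mathscr{C}^{\bullet}$ with the slope $\leq \textbf{h}$ part of the Banach complex $\mathscr{C}^{\bullet}(A^{\mathrm{an}}_{\kappa,m})[1/p]$ of Section \ref{sec:refinedslope}. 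By Theorem \ref{thm:1} and Section \ref{sec:refinedslope} this latter is a bounded complex of finite projective $B[1/p]$-modules annihilated by a slope $\leq \textbf{h}$ polynomial $Q$ with $Q^{*}(0)$ a multiplicative unit, so $e^{\leq \textbf{h}}H^{\bullet}(\mathscr{C}^{\bullet})$ is finitely generated over $B[1/p]$ and killed by $Q^{*}$. This yields all the axioms except invertibility on the complementary summand, and in passing it shows that $H^{\bullet}(\mathscr{C}^{\bullet})$ does admit the slope $\leq h_{\mathrm{aux}}$ truncation for $U_{0}:=U_{1}\cdots U_{n}\in\mathfrak{U}_{p}^{--}$, so that the right-hand side $H^{\bullet}(\mathscr{C}^{\bullet})^{\leq \textbf{h}}$ of the lemma even makes sense.

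The remaining, non-formal, point is invertibility on the complement, which I would establish first for $U_{0}$ and then refine. Writing $F_{U_{0}}=Q_{0}S_{0}$ for the slope $\leq h_{\mathrm{aux}}$ factorisation of the Fredholm determinant of $U_{0}$ on $\mathscr{C}^{\bullet}(A^{\mathrm{an}}_{\kappa,m})[1/p]$, with projector $e^{\leq h_{\mathrm{aux}}}$, Theorem \ref{thm:1} gives that $Q_{0}^{*}(U_{0})$ is invertible on $(1-e^{\leq h_{\mathrm{aux}}})\mathscr{C}^{\bullet}(A^{\mathrm{an}}_{\kappa,m})[1/p]$; I claim it is already invertible on the subcomplex $(1-e^{\leq h_{\mathrm{aux}}})\mathscr{C}^{\bullet}(A^{\mathrm{Iw}}_{\kappa,m})[1/p]$. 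Injectivity is inherited; for surjectivity, given $y$ in the subcomplex one writes its unique preimage $z$ as $z=y+\sum_{k\geq 1}c_{k}U_{0}^{k}z$ by rearranging $Q_{0}^{*}(U_{0})z=y$, and uses that $U_{0}\in\mathfrak{U}_{p}^{--}$ sends $A^{\mathrm{an}}_{\kappa,m}$ into $A^{\mathrm{an}}_{\kappa,m-1}\subseteq A^{\mathrm{Iw}}_{\kappa,m}$ by \eqref{eq:anal} and the chain of inclusions, so each $U_{0}^{k}z$ with $k\geq 1$, and therefore $z$ itself, already lies in $\mathscr{C}^{\bullet}(A^{\mathrm{Iw}}_{\kappa,m})[1/p]$ (and $z=(1-e^{\leq h_{\mathrm{aux}}})z$ since $y$ is). Since invertibility on a complex descends to its cohomology, $H^{\bullet}(\mathscr{C}^{\bullet})$ carries a slope $\leq h_{\mathrm{aux}}$ decomposition for $U_{0}$ with truncation $e^{\leq h_{\mathrm{aux}}}H^{\bullet}(\mathscr{C}^{\bullet})$, a finite $B[1/p]$-module on which all of $A^{-}$ acts compactly; on this module $e^{\leq \textbf{h}}$ is the composite of $e^{\leq h_{\mathrm{aux}}}$ with the $U_{i}$-slope $\leq h_{i}$ projectors, and Lemma \ref{lem:multidec} then identifies $\bigcap_{i}(e^{\leq h_{\mathrm{aux}}}H^{\bullet}(\mathscr{C}^{\bullet}))^{U_{i}\leq h_{i}}$, which by definition is $H^{\bullet}(\mathscr{C}^{\bullet})^{\leq \textbf{h}}$, with $e^{\leq \textbf{h}}H^{\bullet}(\mathscr{C}^{\bullet})$, as wanted. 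I expect this invertibility claim to be the only real obstacle, as it is the step where the sandwich between analytic complexes and the analyticity-improving property of $\mathfrak{U}_{p}^{--}$ genuinely enter — the Banach-module theory of Section 3 not applying verbatim to $A^{\mathrm{Iw}}$-coefficients — whereas everything else is additivity and functoriality of cohomology together with bookkeeping of the various idempotents.
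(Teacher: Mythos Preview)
Your proposal is correct, but it is considerably more elaborate than the paper's own argument. The paper's proof is three sentences: it observes that the cocycle module $Z^{\bullet}$ is closed in $\mathscr{C}^{\bullet}$, so the series $e^{\leq \textbf{h}}$ converges there; that the differentials are continuous and $\mathfrak{U}_{p}$-equivariant, so $e^{\leq \textbf{h}}$ commutes with them and hence preserves the coboundary module $B^{\bullet}$; and that therefore $e^{\leq \textbf{h}}$ descends to $H^{\bullet}=Z^{\bullet}/B^{\bullet}$ with $e^{\leq \textbf{h}}Z^{\bullet}=Z^{\bullet,\leq\textbf{h}}$, $e^{\leq \textbf{h}}B^{\bullet}=B^{\bullet,\leq\textbf{h}}$. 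In effect the paper is treating $(-)^{\leq\textbf{h}}$ on cohomology as \emph{defined} by the image of the chain idempotent, so the lemma reduces to the purely formal fact that a chain idempotent computes a direct summand of cohomology.

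You instead read $H^{\bullet}(\ldots)^{\leq\textbf{h}}$ as an \emph{intrinsic} slope decomposition on the cohomology module and therefore take on the burden of verifying all the axioms, most notably invertibility of $Q_{0}^{*}(U_{0})$ on the complementary summand. Your ``sandwich'' argument for this---using that $U_{0}\in\mathfrak{U}_{p}^{--}$ improves analyticity by one step, so that the finite expression $z=a_{0}^{-1}\bigl(y-\sum_{k\geq 1}a_{k}U_{0}^{k}z\bigr)$ forces $z$ back into the $A^{\mathrm{Iw}}$-complex---is valid and is a genuinely different idea from anything in the paper's proof. It buys you an honest slope decomposition on $H^{\bullet}$ in the sense of the definition in Section~3, at the cost of a page of work the paper does not do. Whether that cost is necessary depends on how one reads the right-hand side of the lemma; under the paper's reading your first paragraph already finishes the proof.
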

\begin{proof}
  Since the differentials $d$ are continuous and $\mathfrak{U}_{p}$-equivariant and since $e^{\leq \textbf{h}}$ converges on each $\mathscr{C}^{i}(Y_{G}(K), A^{\mathrm{Iw}}_{\mathcal{U}, m})$ then $d(e^{\leq \textbf{h}} x) = e^{\leq \textbf{h}}d(x)$ and so $e^{\leq \textbf{h}}$ preserves cocycles and coboundaries and by Lemma \ref{lem:shrinklem}
  $$
    e^{\leq \textbf{h}}Z(Y_{G}(K), A^{\mathrm{Iw}}_{\mathcal{U}, m})[1/p] = Z(Y_{G}(K), A^{\mathrm{Iw}}_{\mathcal{U}, m})[1/p]^{\leq \textbf{h}}
  $$
  and
  $$
  e^{\leq \textbf{h}}B(\overline{Y}_{G}(K), A^{\mathrm{Iw}}_{\mathcal{U}, m})[1/p] = B(\overline{Y}_{G}(K), A^{\mathrm{Iw}}_{\mathcal{U}, m})[1/p]^{\leq \textbf{h}}
  $$
  and the result follows by comparing Betti and \'etale cohomology. 
\end{proof}
\begin{proposition} \label{prop:abjac}
Shrinking $\mathcal{U}$ if necessary, there is an element 
$$
    T\in \mathbb{T}^{-}_{S, p}\hat{\otimes} \Lambda_{\mathcal{U}}
$$
such that 
$$
T \cdot H^{q + 1}(Y_{G}(K)_{\Sigma}, \mathscr{A}^{\mathrm{Iw}}_{\mathcal{U},m})[1/p] \subset H^{q + 1}(Y_{G}(K)_{\Sigma}, \mathscr{A}^{\mathrm{Iw}}_{\mathcal{U},m})[1/p]_{0}.
$$
and whose image in $\mathcal{T}_{\mathcal{U}, \textbf{h}}$ is the element $\tilde{f}$ from Section \ref{sec:eigenvar}.
\end{proposition}
\begin{proof}
The series $e^{\leq \textbf{h}}$ is a $p$-adic limit of polynomials. Indeed, by construction $e^{\leq \textbf{h}}$ converges on an affinoid $\mathcal{V} \times \mathbb{A}^{n}_{\mathrm{rig}}$ containing $\mathcal{U} \times \mathbb{A}^{n}_{\mathrm{rig}}$ and thus we can write $e^{\leq \textbf{h}}$ as a $p$-adic limit of $\mathcal{O}(\mathcal{V})$-coefficient polynomials in the operators $U_{1}, \ldots, U_{n}$. Since the terms in this series tend to $0$ $p$-adically we can find $C \in \Q$ such that  $\left(e^{\leq \textbf{h}}\right)^{\circ} = p^{C}e^{\leq \textbf{h}}$ is optimally integrally normalised in the sense that $e^{\leq \textbf{h}} \in \mathcal{O}(\mathcal{V})^{\circ}\{\{U_{1}, \ldots, U_{n} \}\} \subset \Lambda_{\mathcal{U}}\{\{U_{1}, \ldots, U_{n}\}\}$ and $e^{\leq \textbf{h}} \not\equiv 0 \ \mathrm{mod} \ p$.  

We claim that $\left(e^{\leq \textbf{h}}\right)^{\circ}$ acts on $H^{q + 1}(Y_{G}(K)_{\Sigma}, A^{\mathrm{Iw}}_{\mathcal{U},m})$. Indeed, it suffices to show that $H^{q + 1}(Y_{G}(K)_{\Sigma}, A^{\mathrm{Iw}}_{\mathcal{U},m})$ is $p$-adically complete, but this follows from the fact that $p^{n} \in \mathrm{Fil}_{\mathcal{U}, m}^{n}$ for all $n \geq 0$ and 
$$
    H^{q + 1}(Y_{G}(K)_{\Sigma}, \mathscr{A}^{\mathrm{Iw}}_{\mathcal{U},m}) = \varprojlim_{n}H^{q + 1}(Y_{G}(K)_{\Sigma}, \mathscr{A}^{\mathrm{Iw}}_{\mathcal{U},m}/\mathrm{Fil}_{\mathcal{U}, m}^{n}).
$$
Since $\left(e^{\leq \textbf{h}}\right)^{\circ} $ is $\Lambda_{\mathcal{U}}$-linear, this action induces an action on $H^{q + 1}(Y_{G}(K)_{\Sigma}, \mathscr{A}^{\mathrm{Iw}}_{\mathcal{U},m})[1/p]$. Rescaling by $p^{-C}$ we obtain an action of $e^{\leq \textbf{h}}$. Since the base-change map is Hecke equivariant and $p$-adically-continuous we obtain a map 
$$
    e^{\leq\textbf{h}}H^{q + 1}(Y_{G}(K)_{\Sigma}, \mathscr{A}^{\mathrm{Iw}}_{\mathcal{U},m}) \to H^{q + 1}(\overline{Y}_{G}(K), \mathscr{A}^{\mathrm{Iw}}_{\mathcal{U},m})^{\leq \textbf{h}}.
$$
 As in Section \ref{sec:eigenvar}, shrinking $\mathcal{U}$ if necessary, we let $f \in \mathbb{T}^{-}_{S, p} \otimes \Lambda_{\mathcal{U}}$ be an element mapping to the idempotent inducing the projection $\mathcal{T}_{\mathcal{U}, \textbf{h}} \to \mathcal{O}(\underline{\Pi}) \cong \mathcal{O}(\mathcal{U})$. We then have
$$
    H^{q + 1}(\overline{Y}_{G}(K), \mathscr{A}^{\mathrm{Iw}}_{\mathcal{U},m})[1/p]^{\leq \textbf{h}}\otimes_{\mathcal{T}_{\mathcal{U}, \textbf{h}}} \mathcal{O}(\underline{\Pi})  =  f \cdot H^{q + 1}(\overline{Y}_{G}(K), \mathscr{A}^{\mathrm{Iw}}_{\mathcal{U},m})[1/p]^{\leq \textbf{h}} = 0.
$$
  Setting $T = f \cdot e^{\leq \textbf{h}} $, we thus have:
$$
    T \cdot H^{q + 1}(Y_{G}(K)_{\Sigma}, \mathscr{A}^{\mathrm{Iw}}_{\mathcal{U},m})[1/p] \subset  H^{q + 1}(Y_{G}(K)_{\Sigma}, \mathscr{A}^{\mathrm{Iw}}_{\mathcal{U},m})[1/p]_{0}.
$$
Moreover, since $e^{\leq \textbf{h}}$ maps to the identity in $\mathcal{T}_{\mathcal{U}, \textbf{h}}$ we have that $T$ maps to the image of $f$ which is $\tilde{f}$ by definition.
\end{proof}

\begin{definition} \label{def:abeljacobi}
We define the \textit{slope $\leq \textbf{h}$ overconvergent Abel--Jacobi map} 
$$
    \mathrm{AJ}^{\mathrm{oc}}_{\underline{\Pi}}: H^{q + 1}(Y_{G}(K)_{\Sigma}, \mathscr{A}^{\mathrm{Iw}}_{\mathcal{U},m})[1/p] \to H^{1}(\mathcal{O}_{E}[\Sigma^{-1}], W_{\underline{\Pi}})
$$
to be the map $x \mapsto \psi_{\mathcal{U}}\left(T \cdot x\right)$.
\end{definition}

It remains to show the compatibility of $\mathrm{AJ}_{\underline{\Pi}}^{\mathrm{oc}}$ with $\mathrm{AJ}_{x}^{\mathrm{an}}$. Let 
$$
\phi_{x}^{(q + 1)}:  H^{q + 1}(Y_{G}(K)_{\Sigma}, \mathscr{A}^{\mathrm{Iw}}_{\mathcal{U},m})[1/p] \to  H^{q + 1}(Y_{G}(K)_{\Sigma}, \mathscr{A}^{\mathrm{Iw}}_{\lambda,m})[1/p]_{\mathfrak{m}_{x}}
$$
be the composition of natural specialisation map with the map to the localisation at $\mathfrak{m}_{x}$ and let 
$$
    \phi^{(q)}_{x}: H^{1}(\mathcal{O}_{E}[\Sigma^{-1}
    ], W_{\underline{\Pi}}) \to H^{1}(\mathcal{O}_{E}[\Sigma^{-1}], W_{x})
$$
be the map induced by the natural specialisation map $W_{\underline{\Pi}} \to W_{x}$. The notation $\phi^{(\bullet)}_{x}$ refers to the fact that both of these maps are induced on different degrees of cohomology by the same specialisation map $A^{\mathrm{Iw}}_{\mathcal{U},m} \to A_{\lambda, m}^{\mathrm{Iw}}$ and as a result of this the diagram 
$$
    \begin{tikzcd}
        H^{q + 1}(Y_{G}(K)_{
    \Sigma}, \mathscr{A}^{\mathrm{Iw}}_{\mathcal{U},m})[1/p]_{0} \arrow[r, "\mathrm{AJ}"] \arrow[d, ""] &  H^{1}(\mathcal{O}_{E}[\Sigma^{-1}], H^{q}(\overline{Y}_{G}(K)_{
    \Sigma}, \mathscr{A}^{\mathrm{Iw}}_{\mathcal{U},m})[1/p]) \arrow[d, ""] \\ 
     H^{q + 1}(Y_{G}(K)_{\Sigma}, \mathscr{A}_{\lambda, m}^{\mathrm{Iw}} )_{0}\arrow[r, "\mathrm{AJ}"] &  H^{1}(\mathcal{O}_{E}[\Sigma^{-1}], H^{q}(\overline{Y}_{G}(K), \mathscr{A}_{\lambda, m}^{\mathrm{Iw}} ))
    \end{tikzcd}
$$
commutes by functoriality of the Hochschild--Serre spectral sequence, with the bottom map inducing $\mathrm{AJ}_{x}^{\mathrm{an}}$ on the localisation at $\mathfrak{m}_{x}$. We thus have
\begin{align*}
    \phi^{(q)}_{x}\left(\mathrm{AJ}_{\underline{\Pi}}^{\mathrm{oc}}(z)\right) &= \phi^{(q)}_{x}\left(\mathrm{AJ}\left(\left(f \cdot e^{\leq \textbf{h}}\right)z\right)\right) \\
    &= \mathrm{AJ}_{x}^{\mathrm{an}}\left(\phi_{x}^{(q + 1)}\left((f \cdot e^{\leq \textbf{h}}) z\right)\right) \\
    &= \mathrm{AJ}_{x}^{\mathrm{an}}\left(\phi_{x}^{(q + 1)}\left(z\right)\right)
\end{align*}
where the last line follows because the Hecke operator $f \cdot e^{\leq \textbf{h}}$ acts trivially on $W_{x}$ and thus also on its Galois cohomology. 
\subsection{Compatibility of Abel-Jabcobi maps and norm compatible classes} \label{sec:compat}

We show that our constructions allow us to construct norm compatible elements in Galois cohomology. More specifically, throughout the constructions of the previous sections we had to routinely shrink the subspaces of $\mathcal{W}_{G}$ over which we were working. We show that, by using the `abelian tower' construction of \cite[Section 4.6]{loefflerspherical} we can apply this shrinking uniformly in a tower of abelian extensions of the reflex field $E$. 

Let $(G, H)$ be a spherical pair and let $C_{H} = H/H^{\mathrm{der}}$ denote the maximal torus quotient of $H$. The construction of \textit{op. cit.} uses modified data $(\tilde{G}, \tilde{H}) = (G \times C_{H}, H)$ in order to construct classes over towers of Shimura varieties associated to neat open-compact subgroups with $p$-part $J_{G} \times C_{H,n} \subset \tilde{G}(\Zp)$, where $C_{H,n} = \{c \in C_{H}: c \equiv \ 1 \ \mathrm{mod} \ p^{n}\}$. We consider the (finite) Shimura set given by
$$
    \Delta_{n} = \mathcal{C}_{H}(\Q)\backslash \mathcal{C}_{H}(\A)/C_{H,n} \cdot \mathcal{C}^{p} \cdot \mathcal{C}_{H}(\R)^{\dagger}
$$
where $\mathcal{C}_{H}$ is the $\Q$-group $\mathcal{H}/\mathcal{H}^{\mathrm{der}}$, $\mathcal{C}^{p}$ is the maximal compact subgroup of $\mathcal{C}_{H}(\A^{(p)}_{f})$ and $\mathcal{C}_{H}(\R)^{\dagger}$ is the subset of the components of $\mathcal{C}(\R)$ in the image of a maximal compact subgroup $K_{\mathcal{H}, \infty} \subset \mathcal{C}(\R)$. The Galois action on $\Delta_{n}$ is given by a character 
$$
    \kappa_{n}: \mathrm{Gal}(\overline{E}/E)^{\mathrm{ab}} \to \Delta_{n}
$$
determined by the Shimura datum for $\tilde{G}$ which cuts out an abelian extension $E_{n}/E$. We write $M_{n} \subset \Delta_{n}$ for the image of $\kappa_{n}$. For any $\mathcal{O}$-linear \'etale sheaf $\mathscr{F}$ we have a Galois equivariant isomorphism
$$
    H^{i}(\overline{Y}_{\tilde{G}}(J_{G} \times C_{H, n}), \mathscr{F}) \cong H^{i}(\overline{Y}_{G}(J_{G}), \mathscr{F}) \otimes_{\mathcal{O}} \mathcal{O}[\Delta_{n}].
$$
\begin{proposition} \label{prop:ajcompat}
Let $\textbf{h} \in \Q^{d}_{\geq 0}$, $\mathcal{U}' \subset \mathcal{W}_{G}$ a wide-open disc adapted to $\textbf{h}$, and let $x \in \mathcal{E}_{\mathcal{U}', \textbf{h}}$ be a really nice point with a Coleman family $\underline{\Pi}$ passing through it. There is a wide-open disc $\mathcal{U} \subset \mathcal{U}'$ such that for all $n \geq 1$ we can construct Abel--Jacobi maps 
$$
    \mathrm{AJ}_{\underline{\Pi}, n}^{\mathrm{oc}}: H^{q + 1}(Y_{\tilde{G}}(J_{G} \times C_{H, n})_{\Sigma}, A^{\mathrm{Iw}}_{\mathcal{U}, m})[1/p] \to H^{1}(\mathcal{O}_{E_{n}}[\Sigma^{-1}], W_{\underline{\Pi}}) \otimes_{\mathcal{O}[M_{n}]}\mathcal{O}[\Delta_{n}]
$$
fitting into the commutative diagram \eqref{fig:abjac}. 
\end{proposition}
\begin{proof}
In Section \ref{sec:abeljac} we construct the maps $\mathrm{AJ}_{\underline{\Pi}}^{\mathrm{oc}}$ by constructing a Hecke operator annihilating the cohomology group $H^{q + 1}(Y_{G}(K), \mathscr{A}_{\mathcal{U}, m}^{\mathrm{Iw}})$, possibly after shrinking $\mathcal{U}$. Let $\tilde{\mathbb{T}}^{-}_{S, p}$ denote the Hecke algebra given by the restricted tensor product of spherical Hecke algebras away from $S \cup \{p\}$ and $A^{-}_{\tilde{G}}$ at $p$. In order to prove the proposition it suffices to find a Hecke operator $T \in \tilde{\mathbb{T}}^{-}_{S, p} \hat{\otimes} \Lambda_{\mathcal{U}}$ which annihilates the cohomology group $H^{q + 1}(Y_{G}(J_{G} \times C_{H, n}), \mathscr{A}_{\mathcal{U}, m}^{\mathrm{Iw}})$ for all $n \geq 1$, possibly after shrinking $\mathcal{U}$. But since $H^{q + 1}(Y_{G}(J_{G} \times C_{H, n}), \mathscr{A}_{\mathcal{U}, m}^{\mathrm{Iw}}) \cong H^{q + 1}(Y_{G}(J_{G}), \mathscr{A}_{\mathcal{U}, m}^{\mathrm{Iw}}) \otimes_{\mathcal{O}} \mathcal{O}[\Delta_{n}]$ and there is an isomorphism $\mathbb{T}^{-}_{S, p} \otimes \mathbb{T}(C_{H}) \cong \tilde{\mathbb{T}}^{-}_{S, p}$ where $\mathbb{T}(C_{H})$ consists of Hecke operators on $C_{H}$, and the natural inclusion $G \to \tilde{G}$ induces a map $\mathbb{T}^{-}_{S, p}  \to \tilde{\mathbb{T}}^{-}_{S, p}$ sending $X \mapsto X \otimes 1$. The image of the natural map 
$$
    \mathbb{T}^{-}_{S, p} \hat{\otimes} \Lambda_{\mathcal{U}} \to \tilde{\mathbb{T}}^{-}_{S, p} \hat{\otimes} \Lambda_{\mathcal{U}} \to \mathrm{End}_{\Lambda_{\mathcal{U}}}( H^{q + 1}(Y_{G}(J_{G}), \mathscr{A}_{\mathcal{U}, m}^{\mathrm{Iw}}) \otimes_{\mathcal{O}} \mathcal{O}[\Delta_{n}])
$$
preserves each summand $ H^{q + 1}(Y_{G}(J_{G}), \mathscr{A}_{\mathcal{U}, m}^{\mathrm{Iw}}) \otimes_{\mathcal{O}} \mathcal{O}[\delta]$ corresponding to a class $[\delta] \in \Delta_{n}$ and acts via its usual action on $ H^{q + 1}(Y_{G}(J_{G}), \mathscr{A}_{\mathcal{U}, m}^{\mathrm{Iw}})$. The upshot of this is that we can apply the construction of Proposition \ref{prop:abjac} to obtain (after possibly shrinking $\mathcal{U}$) a Hecke operator $T \in \mathbb{T}^{-}_{S, p} \hat{\otimes} \Lambda_{\mathcal{U}}$ annihilating $H^{q + 1}(Y_{G}(J_{G}), \mathscr{A}_{\mathcal{U}, m}^{\mathrm{Iw}})$ whose image in $\tilde{\mathbb{T}}^{-}_{S, p} \hat{\otimes} \Lambda_{\mathcal{U}}$ therefore annihilates $H^{q + 1}(Y_{G}(J_{G}), \mathscr{A}_{\mathcal{U}, m}^{\mathrm{Iw}}) \otimes_{\mathcal{O}} \mathcal{O}[\Delta_{n}]$. Since we only applied the construction of Proposition \ref{prop:abjac} at level $K^{p}\cdot J_{G}$ this construction is independent of $n$.
\end{proof}
We have a parabolic subgroup $\tilde{Q}_{G} = Q_{G} \times C_{H} \subset \tilde{G}$ with Levi decomposition $\tilde{Q}_{G} = \tilde{L}_{G} \times N_{G} = \left(L_{G} \times C_{H}\right) \times N_{G}$ where we have embedded $N_{G} \to \tilde{G}$ by mapping $n \mapsto (n, 1)$. If we define $\tilde{L}_{G, n} = L_{G, n} \times C_{H, n}$ and $\tilde{V}_{n} = \overline{N}_{n} \times \tilde{L}_{G, n} \times N_{G}$ then $\tilde{V}_{n}$ and $J_{G} \times C_{H, n}$ both have Iwahori decompositions with respect to $\tilde{Q}_{G}$ and so both sides of the degeneracy maps
$$
    \phi_{n}: H^{i}(Y_{\tilde{G}}(\tilde{V}_{n})_{\Sigma}, \mathscr{A}^{\mathrm{Iw}}_{\mathcal{U}, m}) \to H^{i}(Y_{\tilde{G}}(J_{G} \times C_{H, n})_{\Sigma}, \mathscr{A}^{\mathrm{Iw}}_{\mathcal{U}, m})
$$
receive an action of $\mathfrak{U}^{-}_{p}$ and the maps $\phi_{n}$ are equivariant for this action.
\begin{proposition}
With notation as above, suppose we have a collection of elements $z^{H}_{n} \in  H^{q - c + 1}_{\mathrm{Iw}}(Y_{H}(Q_{H}^{0} \cap u^{-1}U_{n}u) , \mathcal{O})$ which are compatible under the natural degeneracy maps 
$$
     H^{q - c + 1}_{\mathrm{Iw}}(Y_{H}(Q_{H}^{0} \cap u^{-1}U_{n + 1}u) , \mathcal{O}) \to  H^{q - c + 1}_{\mathrm{Iw}}(Y_{H}(Q_{H}^{0} \cap u^{-1}U_{n}u) , \mathcal{O}),
$$
where as usual $c = \mathrm{dim}_{\R}Y_{G} - \mathrm{dim}_{\R}Y_{H}$. Then by Proposition \ref{prop:ajcompat} for non-critical $\textbf{h} \in \Q_{\geq 0}^{d}$ it is possible to choose $\mathcal{U}$ such that for all $n \geq 1$ the Abel--Jacobi map $\mathrm{AJ}^{\mathrm{oc}}_{\underline{\Pi}, n}$ is well defined. Writing 
$$
    z_{\underline{\Pi}, n}^{\mathrm{Gal}} = \mathrm{AJ}^{\mathrm{oc}}_{\underline{\Pi}, n} \circ \phi_{n} \circ \iota_{\mathcal{U}, n, *}\left(z^{H}_{n} \right) \in H^{1}(\mathcal{O}_{E_{n}}[
    \Sigma^{-1}], W_{\underline{\Pi}}(c))  \otimes_{\mathcal{O}[M_{n}]}\mathcal{O}[\Delta_{n}],
$$
then, letting $\beta_{n  +1}: \Delta_{n + 1} \to \Delta_{n}$ denote the natural quotient map, we have
$$
    \left(\mathrm{cores}^{n + 1}_{n} \otimes \beta_{n + 1}\right)\left(z_{\mathcal{U}, n + 1}^{\mathrm{Gal}} \right) = U'_{p}\cdot z_{\mathcal{U}, n}^{\mathrm{Gal}}.
$$
\end{proposition} 
\begin{proof}
    The classes $\xi_{\mathcal{U}, n} := \iota_{\mathcal{U}, n, *}\left(z^{H}_{n} \right)$ satisfy the relation 
    $$
        \mathrm{pr}^{n + 1}_{n, *}\left(\xi_{\mathcal{U}, n + 1}\right) = U'_{p} \cdot \xi_{\mathcal{U}, n}
    $$
    by Theorem \ref{thm:normrelation}. Since the maps $\phi_{n}$ are $\mathfrak{U}'_{p}$-equivariant we have $\mathrm{pr}^{n + 1}_{n, *} \circ \phi_{n + 1}(\xi_{\mathcal{U}, n + 1}) = U'_{p} \cdot \phi_{n}(\xi_{\mathcal{U}, n})$. Finally, the degeneracy map 
    $$
        H^{i}(Y_{G}(J_{G} \times C_{H, n + 1})_{\Sigma}, \mathscr{A}^{\mathrm{Iw}}_{\mathcal{U}, m}) \to H^{i}(Y_{G}(J_{G} \times C_{H, n + 1})_{\Sigma}, \mathscr{A}^{\mathrm{Iw}}_{\mathcal{U}, m})
    $$
    corresponds under the Hochschild--Serre spectral sequence to the map 
    $$
        H^{1}(E, W_{\underline{\Pi}} \otimes \mathcal{O}[\Delta_{n + 1}]) \to H^{1}(E, W_{\underline{\Pi}} \otimes \mathcal{O}[\Delta_{n}])
    $$
    induced by $\beta_{n + 1}$. Thus it suffices to show that this map is indeed $\mathrm{cores}^{n + 1}_{n} \otimes \beta_{n + 1}$.
    As a $G_{E}$-set we have $\Delta_{n} = M_{n} \times M_{n} \backslash \Delta_{n}$ and thus $H^{1}(E, W_{\underline{\Pi}} \otimes \mathcal{O}[\Delta_{n}]) = H^{1}(E, W_{\underline{\Pi}} \otimes_{\mathcal{O}} \mathcal{O}[M_{n}]) \otimes_{\mathcal{O}}\mathcal{O}[M_{n}\backslash \Delta_{n}]$. We have $W_{\underline{\Pi}} \otimes_{\mathcal{O}}\mathcal{O}[M_{n}] = \mathrm{Ind}_{G_{E}}^{G_{E_{n}}}\left(W_{\underline{\Pi}}\right)$ and the map $M_{n + 1} \to M_{n}$ induced by restricting $\beta_{n + 1}$ corresponds to the natural restriction map $\mathrm{Gal}(E_{n + 1}/E) \to \mathrm{Gal}(E_{n}/E)$. Thus the map 
    $$
        H^{1}(E, W_{\underline{\Pi}} \otimes \mathcal{O}[\Delta_{n + 1}]) \to H^{1}(E, W_{\underline{\Pi}} \otimes \mathcal{O}[\Delta_{n}])
    $$
    induced by $\beta_{n + 1}$ corresponds to a map 
    $$
        H^{1}(E, \mathrm{Ind}_{G_{E}}^{G_{E_{n + 1}}}\left(W_{\underline{\Pi}}\right)) \otimes_{\mathcal{O}} \mathcal{O}[M_{n + 1} \backslash \Delta_{n + 1}] \to  H^{1}(E, \mathrm{Ind}_{G_{E}}^{G_{E_{n}}}\left(W_{\underline{\Pi}}\right)) \otimes_{\mathcal{O}} \mathcal{O}[M_{n} \backslash \Delta_{n}] 
    $$
    given by the tensor product of the natural map $\mathrm{Ind}_{G_{E}}^{G_{E_{n + 1}}}\left(W_{\underline{\Pi}}\right) \to \mathrm{Ind}_{G_{E}}^{G_{E_{n}}}\left(W_{\underline{\Pi}}\right)$ with $\beta_{n + 1}$. Applying the Shapiro's lemma isomorphism on both sides, this map becomes
    $$
        \mathrm{cores}_{n + 1}^{n} \otimes \beta_{n + 1}: H^{1}(E_{n + 1}, W_{\underline{\Pi}}) \otimes_{\mathcal{O}} \mathcal{O}[M_{n} \backslash \Delta_{n}]\to H^{1}(E_{n}, W_{\underline{\Pi}}) \otimes_{\mathcal{O}} \mathcal{O}[M_{n} \backslash \Delta_{n}],
    $$
    as desired.
\end{proof}
Let $\mathfrak{m}_{\underline{\Pi}}$ be the maximal ideal of $ \mathbb{T}_{S, p}^{-} \hat{\otimes} \Lambda_{\mathcal{U}}$ given by the kernel of the map 
$$
    \mathbb{T}^{-}_{S, p} \hat{\otimes} \Lambda_{\mathcal{U}} \to \mathcal{O}(\underline{\Pi})^{\circ}/\mathfrak{m}_{\mathcal{U}} \cong \Lambda_{\mathcal{U}}/\mathfrak{m}_{\mathcal{U}}.
$$
We can write $\mathfrak{m}_{\underline{\Pi}}$ as a product $\mathfrak{m}_{S} \cdot \mathfrak{m}_{p}$ of maximal ideals $\mathfrak{m}_{S} \subset  \mathbb{T}_{S} \hat{\otimes} \Lambda_{\mathcal{U}}$, $\mathfrak{m}_{p} \subset \mathfrak{U}^{-}_{p} \hat{\otimes} \Lambda_{\mathcal{U}}$.
In order to prove Proposition \ref{prop:bound} we need to make the following assumption: 
\begin{equation} \label{ass:bex} \tag{$\ddagger$}
\text{\textbf{Assumption:} the torsion subgroup of $\overline{H}^{q + 1}(\overline{Y}_{G}(J_{G}), \mathscr{A}^{\mathrm{Iw}}_{\mathcal{U}, m})_{\mathfrak{m}_{S}}$ has bounded exponent.}
\end{equation}
Since this assumption is rather opaque we prove the following lemma relating it to a more natural statement.
\begin{lemma} \label{lem:torvan}
Let $K = K_{p} \times \prod_{\ell \in S}G(\Z_{\ell}) \times K^{pS} \subset G(\Zp) \times \prod_{\ell \in S}G(\Z_{\ell}) \times G(\A_{f}^{pS})$ be an open-compact subgroup and suppose $\mathfrak{m}_{S}$ is such that 
$$
    H^{i}(\overline{Y}_{G}(K), \F)_{\mathfrak{m}_{S}} = 0 
$$
for $i \neq q$, where $\F := \Lambda_{\mathcal{U}}/\mathfrak{m}_{\mathcal{U}}$, a finite field. Then \eqref{ass:bex} holds and, in addition, $H^{q}(\overline{Y}_{G}(K), \mathscr{A}_{\mathcal{U}, m}^{\mathrm{Iw}}) _{\mathfrak{m}_{S}}$ is a flat $\Lambda_{\mathcal{U}}$-module. 
\end{lemma}
\begin{remark}
    The hypothesis of this lemma is designed to resemble a `torsion vanishing theorem' for `generic' (c.f. \cite[Definition 1.1]{yang2025generic}) non-Eisenstein maximal ideals of the spherical Hecke algebra, of the sort proved in, for example, \cite{carianischolze}, \cite{hamann2023torsion}, \cite{caraiani2023etale} and most recently \cite{yang2025generic} where the conjecture is proved for abelian type Shimura varieties up to a mild condition on the primes involved. 
\end{remark}
\begin{proof}
    Let $\lambda \in \mathcal{U}$ be any character. We first claim that 
    $$
        H^{i}(\overline{Y}_{G}(K), \mathscr{A}_{\lambda, m}^{\mathrm{Iw}}/(p))_{\mathfrak{m}_{S}} = 0
    $$
    for all $i \neq q$. Let $K_{p}(p) = \{k \in K_{p}: k \equiv 1 \ \mathrm{mod} \ p\}$ and let $K(p) = K_{p}(p) \times K^{p}$. Then $K(p) \subset K$ is a normal subgroup  which acts trivially on the $\F$-module $A_{\lambda, m}^{\mathrm{Iw}}/(p)$ and since $H^{i}(\overline{Y}_{G}(K(p)), \mathscr{A}_{\lambda, m}^{\mathrm{Iw}}/(p)) \cong H^{i}(\overline{Y}_{G}(K(p)), \F) \otimes_{\F} A_{\lambda, m}^{\mathrm{Iw}}/(p)$ we deduce that $H^{i}(\overline{Y}_{G}(K(p)), \mathscr{A}_{\lambda, m}^{\mathrm{Iw}}/(p)) _{\mathfrak{m}_{S}} = 0$ for $i \neq q$. By considering the spectral sequence 
    $$
        E^{i, j}_{2}: H^{i}(K/K(p), H^{j}(\overline{Y}_{G}(K(p)), \mathscr{A}_{\lambda, m}^{\mathrm{Iw}}/(p))_{\mathfrak{m}_{S}} ) \implies H^{i + j}(\overline{Y}_{G}(K), \mathscr{A}_{\lambda, m}^{\mathrm{Iw}}/(p)) _{\mathfrak{m}_{S}}
    $$
    we deduce that $H^{i}(\overline{Y}_{G}(K), \mathscr{A}_{\lambda, m}^{\mathrm{Iw}}/(p)) _{\mathfrak{m}_{S}} = 0$ for $i < q$. If we let $\left(A_{\lambda, m}^{\mathrm{Iw}}/(p)\right)^{\vee}$ denote the $\F$-dual of $A_{\lambda, m}^{\mathrm{Iw}}/(p)$ then this module is also stabilised by $K(p)$ and we can deduce the following vanishing in compactly supported cohomology $H^{i}_{c}(\overline{Y}_{G}(K), \left(\mathscr{A}_{\lambda, m}^{\mathrm{Iw}}\right)^{\vee})_{\mathfrak{m}_{S}} = 0$ for $i < q$ since $H^{i}_{c}(\overline{Y}_{G}(K), \F)_{\mathfrak{m}_{S}} = 0$ for $i \neq q$ by Poincar\'e duality. Another application of Poincar\'e duality gives that 
    $$
        H^{i}(\overline{Y}_{G}(K), \mathscr{A}_{\lambda, m}^{\mathrm{Iw}}/(p)) _{\mathfrak{m}_{S}} \cong \mathrm{Hom}_{\F}\left(H^{2q - i}_{c}(\overline{Y}_{G}(K), \left(\mathscr{A}_{\lambda, m}^{\mathrm{Iw}}\right)^{\vee})_{\mathfrak{m}_{S}}, \F\right)
    $$
    from which we deduce that $H^{i}(\overline{Y}_{G}(K), \mathscr{A}_{\lambda, m}^{\mathrm{Iw}}/(p)) _{\mathfrak{m}_{S}} = 0$ for $i \neq q$. Noting that $A_{\mathcal{U}, m}^{\mathrm{Iw}}/\mathfrak{m}_{\mathcal{U}} = A_{\lambda, m}^{\mathrm{Iw}}/(p)$, we consider the spectral sequence 
    $$
    E^{i, j}_{2}: \mathrm{Tor}_{-i}^{\Lambda_{\mathcal{U}}}(H^{i}(\overline{Y}_{G}(K), \mathscr{A}_{\mathcal{U}, m}^{\mathrm{Iw}})_{\mathfrak{m}_{S}}, \Lambda_{\mathcal{U}}/\mathfrak{m}_{\mathcal{U}}) \implies H^{i + j}(\overline{Y}_{G}(K), \mathscr{A}_{\lambda, m}^{\mathrm{Iw}}/(p)) _{\mathfrak{m}_{S}}.
    $$
    Since $H^{2q}(\overline{Y}_{G}(K), \mathscr{A}_{\lambda, m}^{\mathrm{Iw}}/(p)) _{\mathfrak{m}_{S}} = 0$ we immediately see that $H^{2q}(\overline{Y}_{G}(K), \mathscr{A}_{\mathcal{U}, m}^{\mathrm{Iw}}) _{\mathfrak{m}_{S}} \otimes_{\Lambda_{\mathcal{U}}}\Lambda_{\mathcal{U}}/\mathfrak{m}_{\mathcal{U}} = 0$ and thus $H^{2q}(\overline{Y}_{G}(K), \mathscr{A}_{\mathcal{U}, m}^{\mathrm{Iw}}) _{\mathfrak{m}_{S}} = 0$ by the topological Nakayama lemma (see the first theorem of \cite[\S3]{balister1997note} for the statement we use here), where we use that cohomology with $\mathscr{A}^{\mathrm{Iw}}_{\mathcal{U}, m}$-coefficients is profinite. Then $$
    \mathrm{Tor}_{-i}^{\Lambda_{\mathcal{U}}} (H^{2q}(\overline{Y}_{G}(K), \mathscr{A}_{\mathcal{U}, m}^{\mathrm{Iw}}) _{\mathfrak{m}_{S}} , \Lambda_{\mathcal{U}}/\mathfrak{m}_{\mathcal{U}}) = 0
    $$
    for all $i$ and so the row $E^{i, 2q}_{2}$ vanishes. We can iterate the above process to get that 
    $$
        H^{i}(\overline{Y}_{G}(K), \mathscr{A}_{\mathcal{U}, m}^{\mathrm{Iw}}) _{\mathfrak{m}_{S}} = 0
    $$
    for $i > q$. Since we have an injection 
    $$
        \mathrm{Tor}_{1}^{\Lambda_{\mathcal{U}}}(H^{q}(\overline{Y}_{G}(K), \mathscr{A}_{\mathcal{U}, m}^{\mathrm{Iw}}) _{\mathfrak{m}_{S}} ,\Lambda_{\mathcal{U}}/\mathfrak{m}_{\mathcal{U}}) \hookrightarrow H^{q - 1}(\overline{Y}_{G}(K), \mathscr{A}_{\lambda, m}^{\mathrm{Iw}}/(p)) _{\mathfrak{m}_{S}} = 0
    $$
    we have $\mathrm{Tor}_{1}^{\Lambda_{\mathcal{U}}}(H^{q}(\overline{Y}_{G}(K), \mathscr{A}_{\mathcal{U}, m}^{\mathrm{Iw}}) _{\mathfrak{m}_{S}} ,\Lambda_{\mathcal{U}}/\mathfrak{m}_{\mathcal{U}}) = 0$ and thus by the local criterion for flatness we see that $\mathrm{Tor}_{1}^{\Lambda_{\mathcal{U}}}(H^{q}(\overline{Y}_{G}(K), \mathscr{A}_{\mathcal{U}, m}^{\mathrm{Iw}}) _{\mathfrak{m}_{S}} ,\Lambda_{\mathcal{U}}/\mathfrak{m}_{\mathcal{U}}) = 0$ for all $i > 0$ and thus $H^{q}(\overline{Y}_{G}(K), \mathscr{A}_{\mathcal{U}, m}^{\mathrm{Iw}}) _{\mathfrak{m}_{S}}$ is a flat $\Lambda_{\mathcal{U}}$-module. It now follows (again from topological Nakayama) that $H^{q - 1}(\overline{Y}_{G}(K), \mathscr{A}_{\mathcal{U}, m}^{\mathrm{Iw}}) _{\mathfrak{m}_{S}} = 0$ and we can iterate the process to get that $H^{i}(\overline{Y}_{G}(K), \mathscr{A}_{\mathcal{U}, m}^{\mathrm{Iw}}) _{\mathfrak{m}_{S}} = 0$ for $i < q$ as well. 
\end{proof} 
\begin{proposition} \label{prop:bound}
Suppose the maximal ideal $\mathfrak{m}_{S}$ of $\mathbb{T}_{S} \hat{\otimes} \Lambda_{\mathcal{U}}$  satisfies assumption \footnote{We consider the action of $\mathbb{T}_{S} \hat{\otimes} \Lambda_{\mathcal{U}}$ on mod $p$ cohomology via the quotient map $\mathbb{T}_{S} \hat{\otimes} \Lambda_{\mathcal{U}} \to\mathbb{T}_{S} \otimes \Lambda_{\mathcal{U}}/\mathfrak{m}_{\mathcal{U}}$} \eqref{ass:bex}. Then there is $D \in \Q$ independent of $n \geq 1$ such that
$$
    z^{\mathrm{Gal}}_{\underline{\Pi}, n} \in p^{D}H^{1}(\mathcal{O}_{E_{n}, \Sigma}, T_{\underline{\Pi}}(c)) \otimes_{\mathcal{O}} \mathcal{O}[M_{n} \backslash \Delta]
$$
considered as a submodule of $H^{1}(\mathcal{O}_{E_{n}, \Sigma}, W_{\underline{\Pi}}(c)) \otimes_{\mathcal{O}} \mathcal{O}[M_{n} \backslash \Delta]$, where $T_{\underline{\Pi}}$ is the image of $H^{q}(\overline{Y}_{G}(J_{G}), \mathscr{A}^{\mathrm{Iw}}_{\mathcal{U}, m})$ in $W_{\underline{\Pi}}$.
\end{proposition}
\begin{proof}
Write $H^{q + 1, \circ}_{\Sigma, n} := H^{q + 1}(Y_{\tilde{G}}(J_{G} \times C_{n})_{\Sigma}, \mathscr{A}^{\mathrm{Iw}}_{\mathcal{U}, m})$ and $H^{q + 1}_{\Sigma, n} = H^{q + 1}(Y_{\tilde{G}}(J_{G} \times C_{n})_{\Sigma}, \mathscr{A}^{\mathrm{Iw}}_{\mathcal{U}, m})[1/p]$. The lattice given by the image of the natural map $f: H^{q + 1, \circ}_{\Sigma, n} \to H^{q + 1}_{\Sigma, n}$ is preserved by the Hecke algebra $\mathbb{T}_{S, p}^{-} \hat{\otimes}\Lambda_{\mathcal{U}}$. Let $C \in \Q$ be such that $T \in p^{C}\left(\mathbb{T}_{S, p}^{-} \hat{\otimes}\Lambda_{\mathcal{U}}\right)$. By functoriality of the Hochschild--Serre spectral sequence we have a diagram
\begin{equation} \label{fig:bound}
\begin{tikzcd} 
    H^{q + 1, \circ}_{\Sigma, n} \arrow[r, "f"] & H^{q + 1}_{\Sigma, n} \arrow[d, "T"] \\
    H^{q + 1, \circ}_{\Sigma,n, 0} \arrow[u, ""] \arrow[d, ""] \arrow[r, "g"] & H^{q + 1}_{\Sigma,n, 0} \arrow[d, ""] \\
    H^{1}(\mathcal{O}_{E_{n}}[\Sigma^{-1}], T_{\underline{\Pi}}) \otimes_{\mathcal{O}}\mathcal{O}[M_{n}\backslash\Delta]\arrow[r, "h"] & H^{1}(\mathcal{O}_{E_{n}}[\Sigma^{-1}], W_{\underline{\Pi}}) \otimes_{K}K[M_{n}\backslash\Delta],
\end{tikzcd}
\end{equation}
where $H^{q + 1, \circ}_{\Sigma, n, 0}, H^{q + 1}_{\Sigma, n, 0}$ denotes the cohomologically trivial parts of $H^{q + 1, \circ}_{\Sigma, n}, H^{q + 1}_{\Sigma, n}$ respectively, and $g$ the restriction of $f$ to these submodules, $h$ is induced by the natural map $T_{\underline{\Pi}} \to W_{\underline{\Pi}}$, the unlabelled upward vertical arrow is the natural inclusion and the unlabelled downward vertical arrows are the natural maps arising from the Hochschild--Serre spectral sequence. We claim that $T$ sends $f\left(H^{q + 1, \circ}_{\Sigma, n}\right)$ into $p^{D}g\left( H^{q + 1, \circ}_{\Sigma,n, 0}\right)$ for $D \in \Z$ independent of $n$. Since $p^{-C}T$ preserves $ H^{q + 1, \circ}_{\Sigma,n}$, given $x \in  H^{q + 1, \circ}_{\Sigma,n}$, and writing $p^{-C}T(x) = y \in  H^{q + 1, \circ}_{\Sigma,n}$, we have $f(y) \in  H^{q + 1}_{\Sigma,n, 0}$. Consider the commutative diagram 
$$
\begin{tikzcd}
     H^{q + 1, \circ}_{\Sigma,n}/H^{q + 1, \circ}_{\Sigma,n, 0} \arrow[d, hook, "BC^{\circ}"] \arrow[r, "f"] &  H^{q + 1}_{\Sigma,n}/H^{q + 1}_{\Sigma,n, 0} \arrow[d, hook, "BC"] \\ 
     \overline{H}_{n}^{q + 1, \circ} \arrow[r, ""] & \overline{H}^{q + 1}_{n}
\end{tikzcd}
$$
where $\overline{H}_{n}^{q + 1, \circ} := H^{q + 1}(\overline{Y}_{G}(J_{G} \times C_{n}), \mathscr{A}^{\mathrm{Iw}}_{\mathcal{U}, m})$, $\overline{H}_{n}^{q + 1} := \overline{H}_{n}^{q + 1, \circ}[1/p]$, and $BC, BC_{0}$ are induced by the base change maps. We deduce that there is $N \in \Z$ such that $p^{N}y \in H^{q + 1, \circ}_{\Sigma,n, 0}$ and moreover that $N$ is bounded by the exponent of the torsion subgroup $\overline{H}_{n}^{q + 1, \circ}$, which is finite by our assumption \eqref{ass:bex} and independent of $n$ since $\overline{H}^{q + 1, \circ}_{n} \cong \oplus_{M_{n} \backslash \Delta}\overline{H}^{q + 1}_{0}$. Thus by taking $N \gg 0$ we ensure that for any $x$ as above we have $p^{-C}T\left(f(x)\right) = p^{-C - N}g\left(p^{N}y\right)$ with $p^{N}y \in H^{q + 1, \circ}_{\Sigma,n, 0}$. Taking $D = -C - N$ we conclude that $f\left(H^{q + 1, \circ}_{\Sigma,n}\right) \subset p^{D}g\left(H^{q + 1, \circ}_{\Sigma,n, 0}\right)$ with $D$ independent of $n$. The result follows from the commutativity of the bottom square of  \eqref{fig:bound}.

\end{proof}
\section{Criterion for \texorpdfstring{$Q_{H}^{0}$}{TEXT}-admissibility and extra variables} \label{sec:superf}
\subsection{Criterion for \texorpdfstring{$Q_{H}^{0}$}{TEXT}-admissibility} \label{sec:admiss}
Let $\mu \in X_{+}^{\bullet}(S_{H}), \lambda \in X_{+}^{\bullet}(S_{G})$ and suppose there is an injective $H$-map
$$
    V^{H}_{\mu} \to V^{G}_{\lambda}.
$$
Is there a character $\chi \in X^{\bullet}(G)$ such that $\lambda + \chi \in X^{\bullet}_{+}(S_{G})^{Q_{H}^{0}}$? 
\begin{lemma} \label{lem:admiss}
Suppose $\mu$ is trivial on $Q_{H}^{0} \cap G^{\mathrm{der}}$ and the maximal torus quotient $C_{G}$ of $G$ is split. Then there is $\chi_{\mu} \in X^{\bullet}(G)$ such that $(V_{\lambda}^{G} \otimes \chi_{\mu}^{-1})^{Q_{H}^{0}} \neq \{0\}$.
\end{lemma}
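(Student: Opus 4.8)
The plan is to unwind the hypotheses representation-theoretically and then produce the twisting character $\chi_\mu$ by hand. First I would recall that, since $V^H_\mu \hookrightarrow V^G_\lambda$ as $H$-representations and the spherical-cell data force the space of $Q_H^0$-invariants in any irreducible $V^G_\nu$ to be at most one-dimensional (by \cite[Proposition 3.2.1]{loeffler2021spherical}), the problem is really about comparing central/torus characters. The hypothesis that $\mu$ is trivial on $Q_H^0 \cap G^{\mathrm{der}}$ is exactly what lets us correct the discrepancy by a character of $G$ that is trivial on $G^{\mathrm{der}}$, i.e.\ by an element of $X^\bullet(C_G)$, and the splitness of $C_G$ guarantees that $X^\bullet(C_G) = X^\bullet(C_{G,\overline{\mathbb{Q}_p}})$ so there is no obstruction coming from Galois descent of characters.

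Next I would set up the precise mechanism. The $H$-subrepresentation of $V^G_\lambda$ generated by the image of a highest-weight vector of $V^H_\mu$ under the fixed embedding is irreducible of highest weight $\mu$ with respect to $B_H \subset Q_H$. Pulling back along $H \to G$ and restricting further to $Q_H^0$, one sees that $V^G_\lambda$ contains a $Q_H^0$-eigenvector on which $Q_H^0$ acts through the restriction of $\mu$ composed with the quotient $Q_H^0 \to S_H \to S_H/S_H^0$ (combined with the identification of $S_H^{\mathrm{stab}}$ with $Q_H/Q_H^0$ from \cite[Remark 3.2.4]{loeffler2021spherical} and Lemma \ref{lem:points}). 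The obstruction to this eigenvector being genuinely $Q_H^0$-\emph{invariant} is precisely the character $\mu|_{Q_H^0}$; by hypothesis this character is trivial on $Q_H^0 \cap G^{\mathrm{der}}$, hence factors through the image of $Q_H^0$ in $C_G = G/G^{\mathrm{der}}$. Since $C_G$ is split, this character of a subgroup of $C_G(\mathbb{Z}_p)$ extends to an algebraic character $\chi_\mu \in X^\bullet(C_G) \subset X^\bullet(G)$ (here I would use that a character of the Zariski-dense subgroup extends uniquely to the split torus, then pull back to $G$). Then $V_\lambda \otimes \chi_\mu^{-1}$ has a $Q_H^0$-eigenvector on which the eigen-character is trivial, i.e.\ $(V_\lambda \otimes \chi_\mu^{-1})^{Q_H^0} \neq \{0\}$.

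I would present the argument in the order: (i) reduce to producing a character of $G$ killing the $Q_H^0$-action on the chosen eigenline; (ii) identify that action with $\mu$ restricted along $Q_H^0 \hookrightarrow Q_H \to S_H$, using the structural isomorphisms already in the paper; (iii) observe the triviality-on-$Q_H^0 \cap G^{\mathrm{der}}$ hypothesis forces this to descend to $C_G$; (iv) invoke splitness of $C_G$ to lift it to $\chi_\mu \in X^\bullet(G)$; (v) conclude by twisting. The main obstacle I anticipate is step (ii)--(iii): carefully checking that the character through which $Q_H^0$ acts on the eigenline really does extend to an \emph{algebraic} character of $Q_H^0$ (not merely a continuous character of $\mathbb{Z}_p$-points) and then that its restriction to $Q_H^0 \cap G^{\mathrm{der}}$ being trivial is enough to factor it through the image of $Q_H^0$ in the \emph{full} split torus $C_G$ rather than just some subtorus — this may require a short argument that the image of $Q_H^0$ in $C_G$ is Zariski-dense, or alternatively an explicit description of which characters of $C_G$ one needs, after which the twisting itself is formal.
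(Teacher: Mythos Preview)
Your proposal is correct and follows essentially the same route as the paper's proof: both observe that the highest-weight vector of $V^H_\mu$ (sitting inside $V^G_\lambda$) is a $Q_H^0$-eigenvector with eigencharacter $\mu|_{Q_H^0}$, that this character descends to $Q_H^0/(Q_H^0\cap G^{\mathrm{der}})\hookrightarrow C_G$, and that splitness of $C_G$ lets one extend it to a character $\chi_\mu$ of $G$. The paper's version is two sentences; your invocations of $S_H^{\mathrm{stab}}$ and Lemma~\ref{lem:points} are unnecessary here, and the obstacles you anticipate in steps (ii)--(iii) do not arise, since $\mu$ is already an algebraic character of $Q_H$ and the image of the connected group $Q_H^0$ in the split torus $C_G$ is automatically a split subtorus, from which any character extends.
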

\begin{proof}
We have an injection 
$$
    Q_{H}^{0}/(Q_{H}^{0} \cap G^{\mathrm{der}}) \to C_{G}
$$
and thus $ Q_{H}^{0}/(Q_{H}^{0} \cap G^{\mathrm{der}})$ is a split torus. The restriction of $\mu$ to $Q_{H}^{0}$ lifts (non-uniquely) to a character $\chi_{\mu}$ of $G$ whence the result follows.
\end{proof}

The assumptions in Lemma \ref{lem:admiss} won't hold in every case, so we describe a process to increase the number of $Q_{H}^{0}$ admissible weights by substituting the pair $(G, H)$ for a slightly modified pair $(\tilde{G}, \tilde{H})$.

Assume $S^{0}_{H}$ satisfies Milne's assumption (SV5). This is equivalent to the real points of the subgroup 
$$
    (S_{H}^{0})^{a} = \cap_{\chi \in X^{\bullet}(S_{H})} \mathrm{Ker}(\chi)
$$
being compact \footnote{We emphasise again that we are only imposing this assumption for convenience.}. 
For an algebraic group $M$ define $\tilde{M} := M \times S_{H}^{0}$ and let 
$$
    Q_{\tilde{H}}^{0} := \{(h, \overline{h}) \in Q_{H}^{0} \times S_{H}^{0}\}
$$
where $\overline{h}$ denotes the image of $h$ in $S_{H}^{0}$. Since this is the kernel of the map $\tilde{Q}_{H} \to S_{H}^{0}$ given by $(q, s) \mapsto \overline{q}s^{-1}$ it is a mirabolic subgroup of $\tilde{Q}_{H}$. We have that $\tilde{\mathcal{F}} := \tilde{\overline{Q}}_{G} \backslash \tilde{G} \cong \mathcal{F}$ and its easy to see that $ Q_{\tilde{H}}^{0} $ has an open orbit on $\tilde{\mathcal{F}}$. A character $\mu \in X^{\bullet}(S_{H})$ induces a character $\tilde{\mu} \in X^{\bullet}( Q_{\tilde{H}}^{0})$ given by 
$$
    (h, \overline{h}) \mapsto \mu(\overline{h})
$$
which corresponds to $\mu$ under the isomorphism $Q_{H}^{0} \cong  Q_{\tilde{H}}^{0}$ sending $h$ to $(h, \overline{h})$. What's more, $\tilde{\mu}$ admits an extension to a character $\chi_{\mu} \in X^{\bullet}(\tilde{G})$ by simply taking for $(g,s) \in \tilde{G}$ 
$$
    \chi_{\mu}(g, s) = \mu(s).
$$
Thus 
$$
    (V^{G}_{\lambda} \otimes \chi_{\mu}^{-1})^{Q_{\tilde{H}}^{0}} \neq \{0\}
$$
i.e. the weight $\lambda - \chi_{\mu}$ is $Q_{\tilde{H}}^{0}$-admissible.

\section{Congruences and distribution valued Galois cohomology classes} \label{sec:congruences}
\subsection{Congruences between classes with varying twists}
 We show that twists of our classes by global characters of $G$ satisfy congruences modulo powers of $p$. Such congruence relations are crucial in the construction of \textit{analytic Euler systems} in the sense of \cite[Definition 3.2.1]{buyukboduk2018iwasawa} and $p$-adic $L$-functions as they allow one to extend cyclotomic variation over a full copy of the $\mathrm{GL}_{1}$-weight space.

 Let $\chi \in X^{\bullet}(G)$ be a character whose restriction to $L_{G}^{0}$ is trivial. Since $\mathcal{W}_{G}(L)$ has a group struture, for any wide-open disc $\mathcal{U}$ we make sense of the translated subset $\mathcal{U} + \chi$. We show that elements constructed via the maps 
 $$
    \iota_{\mathcal{U} + \chi, *}: H^{i}_{\mathrm{Iw}}(Y_{H}(Q_{H}^{0} \cap u^{-1}U_{n}u), \mathcal{O}) \to  H^{i + c}(Y_{G}(V_{n}), \mathscr{A}^{\mathrm{an}}_{\mathcal{U}, m} \otimes \chi) 
 $$
 satisfy a congruence relation as the classes vary over powers of $\chi$. This congruence relation will be the crucial technical input in constructing classes which interpolate twists for all such $\chi \in X^{\bullet}(G)$. This congruence relation was first observed for algebraic coefficients in \cite[Theorem 5.2.1]{loeffler2021spherical} where it was used to show that such classes vary in ordinary families by taking the limit over the level tower. In our case we do away with the ordinarity hypothesis and thus incur denominators as we `climb the tower'; we will control the ensuing growth using $p$-adic analytic methods as in \cite{recoleman}. 

 Since for any $n \geq 1$ the mod $p^{n}$ restriction of $\chi$ to $V_{n}$ is trivial we obtain an isomorphism
$$
    H^{i}(Y_{G}(K_{n}), \mathscr{A}^{\mathrm{Iw}}_{\mathcal{U}, m} \otimes \chi /p^{n}) \cong H^{i}(Y_{G}(K_{n}), \mathscr{A}^{\mathrm{Iw}}_{\mathcal{U}, m}/p^{n})
$$
for any $K_{n} \subset V_{n}$, given by taking the cup product with the canonical mod $p^{n}$ basis element $e_{\chi, n} \in H^{0}(Y_{G}(K_{n}), \mathcal{O}(\chi^{-1})/p^{n})$.

From now on let $\chi_{1}, \ldots, \chi_{d} \in X^{\bullet}(G)$ be a collection of characters of $G$ whose restriction to $L_{G}$ descends to a character on $S_{G}/S_{G}^{0}$. Given a tuple $\underline{j} = (j_{1}, \ldots, j_{d}) \in \Z^{d}_{\geq 0}$ write $\underline{\chi}^{\underline{j}}:= \chi_{1}^{j_{1}}\cdots\chi_{d}^{j_{d}}$ and for $m \gg 0$ let 
$$
    f^{\mathrm{sph}, [\underline{j}]}_{\mathcal{U}, m} \in A_{\mathcal{U}, m}^{\mathrm{an}}\otimes\underline{\chi}^{\underline{j}}
$$
denote the function defined in Definition \ref{def:sphericalvector} for the wide-open disc $\mathcal{U} + \underline{\chi}^{\underline{j}}$. 

\begin{proposition}
Let $h \geq 0$ and let $h_{1} + \cdots + h_{d} = h$ be a partition of $h$. Then for any $n \geq 0$ classes $f^{\mathrm{sph}, [\underline{j}]}_{\mathcal{U}, m}$ satisfy the following congruence:
$$
     \sum_{\substack{j_{i} \leq h_{i}\\ i = 1, \ldots, d}}\binom{h_{1}}{j_{1}}\cdots\binom{h_{d}}{j_{d}}(-1)^{\sum_{i = 1}^{d}j_{i}}\tau^{n}uf_{\mathcal{U}, m}^{\mathrm{sph}, [\underline{j}]}\underline{\chi}^{-\underline{j}} \equiv \ 0 \ \mathrm{mod} \ p^{hn}
$$
\end{proposition}
\begin{proof}
Let $\overline{n} \ell n \in U_{\mathrm{Bru}}^{G}(\Zp)$ and let $\overline{n}'\ell'uq \in U_{\mathrm{Sph}}(\Zp)$ be such that $\overline{n}\ell \tau^{n} n \tau^{-n}u = \overline{n}'\ell'uq$. We note that $q \in Q_{H}^{0} \cap u^{-1}\overline{Q}_{G}u \ \mathrm{mod} \ p^{n}$ so that $\chi_{i}(q) \equiv \ 1 \ \mathrm{mod} \ p^{n}$ by assumption. Then
\begin{align*}
    \sum_{\substack{j_{i} \leq h_{i}\\ i = 1, \ldots, d}}\binom{h_{1}}{j_{1}}\cdots\binom{h_{d}}{j_{d}}(-1)^{\sum_{i = 1}^{d}j_{i}}\tau^{n}uf_{\mathcal{U}, m}^{\mathrm{sph}, [\underline{j}]}\underline{\chi}^{-\underline{j}}(\overline{n} \ell n) &= \sum_{\substack{j_{i} \leq h_{i}\\ i = 1, \ldots, d}}\binom{h_{1}}{j_{1}}\cdots\binom{h_{d}}{j_{d}}(-1)^{\sum_{i = 1}^{d}j_{i}}f_{\mathcal{U}, m}^{\mathrm{sph}, [\underline{j}]}\underline{\chi}^{-\underline{j}}(\overline{n}\ell\tau^{n} n \tau^{-n} u) \\
    &=\sum_{\substack{j_{i} \leq h_{i}\\ i = 1, \ldots, d}}\binom{h_{1}}{j_{1}}\cdots\binom{h_{d}}{j_{d}}(-1)^{\sum_{i = 1}^{d}j_{i}}f_{\mathcal{U}, m}^{\mathrm{sph}, [\underline{j}]}\underline{\chi}^{-\underline{j}}(\overline{n}'\ell'uq) \\
    &= k_{\mathcal{U}}^{G}(\ell')\underline{\chi}^{\underline{j}}(\ell')\sum_{\substack{j_{i} \leq h_{i}\\ i = 1, \ldots, d}}\binom{h_{1}}{j_{1}}\cdots\binom{h_{d}}{j_{d}}(-1)^{\sum_{i = 1}^{d}j_{i}}\underline{\chi}^{-\underline{j}}(\ell'q) \\
    &= k_{\mathcal{U}}^{G}(\ell')\sum_{\substack{j_{i} \leq h_{i}\\ i = 1, \ldots, d}}\binom{h_{1}}{j_{1}}\cdots\binom{h_{d}}{j_{d}}(-1)^{\sum_{i = 1}^{d}j_{i}}\underline{\chi}^{-\underline{j}}(q) \\
    &= k_{\mathcal{U}}^{G}(\ell')\prod_{i = 1}^{d}\left(1 - \chi_{i}^{-1}(q)\right)^{h_{i}}.
\end{align*}
Since $1 - \chi_{i}^{-1}(p)$ vanishes mod $p^{n}$ the whole expression vanishes mod $p^{hn}$. 
 \end{proof}
\begin{definition}
Suppose we have 
$$
z^{H}_{\infty, n} \in H^{i}_{\mathrm{Iw}}(Y_{H}(Q_{H}^{0} \cap u^{-1}U_{n}u), \mathcal{O})
$$
and let $z^{G}_{\infty, n} = \iota_{*}\left(z^{H}_{\infty, n}\right)$.
For a tuple $\underline{j} = (j_{1}, \ldots , j_{d}) \in \Z_{\geq 0}^{d}$ define 
$$
    \xi^{G, [\underline{j}]}_{\mathcal{U}, n} = \tau^{n}_{*} \circ u_{*} \varprojlim_{k}(z^{G}_{\infty, n} \cup f^{\mathrm{sph}, [\underline{j}]}_{\mathcal{U}, m, k}) \in H^{i + c}(Y_{G}(K^{G}_{\infty, n}), A_{\mathcal{U}, m}^{\mathrm{Iw}}\otimes \underline{\chi}^{\underline{j}}).
$$
This is simply the class $\xi_{\mathcal{U} + \underline{\chi}^{\underline{j}}}^{G}$ constructed in Section \ref{sec:LABL}.
\end{definition}

We note that $K^{G}_{\infty, n} \subset V_{n}$ and fixes $\tau^{n}uf^{\mathrm{sph}}_{\mathcal{U}, m}$ for any wide-open disc $\mathcal{U}$. Let $h \geq 1$ and write $\mathrm{Res}_{n}^{hn}$ for the restriction map induced by the inclusion 
$$
    K^{G}_{\infty, hn} \subset K^{G}_{\infty, n}.
$$
\begin{lemma} \label{lem:cong}
Let $\chi_{1}, \ldots, \chi_{d}$ be as above and for $h \geq 1$ let $\underline{h} \in \Z_{\geq 0}^{d}$ denote the $d$-tuple with every entry equal to $h$. Then for any $n \geq 1$ and any partition $h = h_{1} + \cdots + h_{d}$ with $h_{i} \geq 0$ we have 
$$
    \sum_{\substack{j_{i} \leq h_{i}\\ i = 1, \ldots, d}}\binom{h_{1}}{j_{1}}\cdots\binom{h_{d}}{j_{d}}(-1)^{\sum_{i = 1}^{d}j_{i}}\mathrm{Res}_{n}^{hn}\left(\xi^{G,[\underline{j}]}_{\mathcal{U}, n}\right) \cup e_{\underline{j}} \equiv \ 0 \ \mathrm{mod} \ p^{hn}
$$
as an element of $H^{i + c}(Y_{G}(K^{G}_{\infty, hn}), A_{\mathcal{U},m}^{\mathrm{Iw}})$, where $e_{\underline{j}}$ is the canonical element of $\mathcal{O}\otimes\chi^{-\underline{j}} = \mathcal{O}\cdot e_{\underline{j}}$. 
\end{lemma}
\begin{proof}
The class $\xi_{\mathcal{U}, n}^{G, [\underline{j}]}$ is the image of a class
$$
    \xi^{G}_{\infty, n} \in H^{i + c}_{\mathrm{Iw}}(Y_{G}(K_{\infty, n}^{G}), \mathcal{O})
$$
under the map given by taking the limit over the mod $p^{k}$ cup products with $\tau^{n}uf_{\mathcal{U}, m, k}^{\mathrm{Sph}, [\underline{j}]}$ at level $K_{k, n}^{G}$. We then have for any $k \geq hn$
\begin{align*}
     \sum_{\substack{j_{i} \leq h_{i}\\ i = 1, \ldots, d}}\binom{h_{1}}{j_{1}}\cdots\binom{h_{d}}{j_{d}}(-1)^{\sum_{i = 1}^{d}j_{i}}&\mathrm{Res}_{n}^{hn}\left(\xi^{G,[\underline{j}]}_{\mathcal{U}, n}\right) \cup e_{\underline{j}} \\ &\equiv \sum_{\substack{j_{i} \leq h_{i}\\ i = 1, \ldots, d}}\binom{h_{1}}{j_{1}}\cdots\binom{h_{d}}{j_{d}}(-1)^{\sum_{i = 1}^{d}j_{i}}\mathrm{Res}_{n}^{hn}\left(\xi^{G,[\underline{j}]}_{\infty, n} \cup \tau^{n}uf_{\mathcal{U}, k}^{\mathrm{sph}}\right) \cup e_{\underline{j}} \ \mathrm{mod} \ p^{hn} \\ 
     &\equiv \sum_{\substack{j_{i} \leq h_{i}\\ i = 1, \ldots, d}}\binom{h_{1}}{j_{1}}\cdots\binom{h_{d}}{j_{d}}(-1)^{\sum_{i = 1}^{d}j_{i}}\mathrm{Res}_{n}^{hn}\left(\xi^{G}_{\infty, n}\right)  \cup \tau^{n}uf_{\mathcal{U}, k}^{\mathrm{sph}}\cup e_{\underline{j}} \\
     &\equiv \sum_{\substack{j_{i} \leq h_{i}\\ i = 1, \ldots, d}}\binom{h_{1}}{j_{1}}\cdots\binom{h_{d}}{j_{d}}(-1)^{\sum_{i = 1}^{d}j_{i}}\mathrm{Res}_{n}^{hn}\left(\xi^{G}_{\infty, n}\right)  \cup \tau^{n}uf_{\mathcal{U}, k}^{\mathrm{sph}}\underline{\chi}^{-\underline{j}} \\
     &\equiv \mathrm{Res}_{n}^{hn}\left(\xi^{G}_{\infty, n}\right)  \cup\sum_{\substack{j_{i} \leq h_{i}\\ i = 1, \ldots, d}}\binom{h_{1}}{j_{1}}\cdots\binom{h_{d}}{j_{d}}(-1)^{\sum_{i = 1}^{d}j_{i}} \tau^{n}uf_{\mathcal{U}, k}^{\mathrm{sph}}\underline{\chi}^{-\underline{j}}
     \equiv 0.
\end{align*}
\end{proof}

\subsection{Analytic classes in Galois cohomology}

We show how one can use the above congruences to construct distribution-valued Galois cohomology classes interpolating cyclotomic twists  with full variation. We assume that the spherical pair $(G, H)$ admits compatible Shimura data and we fix a set of primes $\Sigma$ not dividing the level, so that $Y_{G}, Y_{H}$ extend to smooth integral models over $\mathcal{O}_{E, \Sigma}$. 

We utilise the `abelian tower' construction of \cite{loefflerspherical}, recalled in Section \ref{sec:compat}.  Recall that we define
$$
    \tilde{G} = G \times C_{H}, 
$$
where $H \to C_{H}$ is the maximal torus quotient of $H$, and 
$$
    \Delta_{n} = \mathcal{C}_{H}(\Q)\backslash \mathcal{C}_{H}(\A)/C_{H, n} \cdot \mathcal{C}^{p} \cdot \mathcal{C}_{H}(\R)^{\dagger}.
$$
We set $\Delta_{\infty} = \varprojlim_{n}\Delta_{n}$. These are abelian $p$-adic Lie groups. There is a Galois character 
$$
    \kappa_{n}: \mathrm{Gal}(\overline{E}/E)^{\mathrm{ab}} \to \Delta_{n}
$$
defined using the Shimura cocharacter $\mathrm{Res}_{E/\Q}\mathbb{G}_{m} \to C_{H}$. Taking the limit over $n$ gives a character $\kappa_{\infty}: \mathrm{Gal}(\overline{E}/E)^{\mathrm{ab}} \to \Delta_{\infty}$. Let $\Gamma$ denote the image of $\kappa_{\infty}$ in $\Delta_{\infty}$ and let $E_{\infty}$ be the corresponding extension of the reflex field $E$. We write $\Gamma_{n} \subset \Gamma$ for the kernel of $\Gamma \to \Delta_{n}$ and let $E_{n}$ denote the corresponding extension of $E$. We have $E_{\infty} = \cup_{n \geq 0}E_{n}$. 

Suppose we are given elements $z^{H}_{n} \in H^{i}_{\mathrm{Iw}}(Y_{H}(Q_{H}^{0}\cap u^{-1}U_{n}u)_{\Sigma}, \mathcal{O})$ compatible as we vary $n$. Let $ \chi_{1}, \ldots, \chi_{d}$ be characters of $\tilde{G}$ which are trivial on $L_{G}^{0}$ and we impose that these characters define a local coordinate on $\Delta_{\infty}$ i.e. there is an open-compact subgroup $\Delta^{0}_{\infty} \subset \Delta_{\infty}$ such that 
$$
    (\chi_{1}, \ldots, \chi_{d}): \Delta^{0}_{\infty} \cong \Zp^{d}.
$$
For $\underline{j} = (j_{1}, \ldots, j_{d}) \in \Z^{d}_{\geq 0}$ write $\underline{\chi}^{\underline{j}} = \chi_{1}^{j_{1}}\cdots\chi_{d}^{j_{d}}$ and let $\mathcal{U} \subset \mathcal{W}_{m}$ be a wide-open disc for some $m\geq 1$. We can now form the class 
$$
    \xi_{\mathcal{U}, n}^{[\underline{j}]} = \left(\tau^{n}_{*} \circ u_{*} \circ \iota^{[\underline{j}]}_{\mathcal{U}, *}\right)(z_{n}^{H})\in H^{i + c}_{\mathrm{Iw}}(Y_{G}(V_{n})_{\Sigma}, \mathscr{A}^{\mathrm{Iw}}_{\mathcal{U}, m}\otimes\underline{\chi}^{\underline{j}}),
$$
where $\iota_{\mathcal{U}, *}^{[\underline{j}]}$ is, following our usual notational convention, the map $\iota_{\mathcal{U} + \underline{\chi}^{\underline{j}}, n, *}$ constructed in Section \ref{sec:LABL}. 
\begin{definition}
Let $\Delta_{1} \subset \Delta_{2}$ be profinite abelian groups. We define
$$
    \mathrm{Coind}_{\Delta_{1}}^{\Delta_{2}}(L) = \mathrm{Hom}_{\mathcal{O}[[\Delta_{1}]] \otimes L}(\mathcal{O}[[\Delta_{2}]] \otimes L, L).
$$
These groups admit a natural action of $\Delta_{2}/\Delta_{1}$ given by 
$$
    (\delta_{2} \cdot f)(\delta_{1}) = f(\delta_{2}^{-1}\delta_{1}),
$$
for $\delta_{i} \in \Delta_{2}, i = 1,2$.
\end{definition}

\begin{remark}
When $[\Delta_{2}: \Delta_{1}]$ is finite this is just the usual coinduced module occurring in the cohomological statement of Shapiro's lemma, a fact we will make use of below. 
\end{remark}
Let $F$ be a field and let the Galois group $G_{F}$ act trivially on $\mathrm{Coind}_{\Delta_{1}}^{\Delta_{2}}(L)$.   There is a natural $G_{F}$-equivariant map $\mathrm{CoInd}_{\Delta_{1}}^{\Delta_{2}}(L) \to L$ given by evaluating at the identity; this map becomes an isomorphism upon restriction to $\left(\mathrm{CoInd}^{\Delta_{2}}_{\Delta_{1}}(L)\right)^{\Delta_{2}}$. Given a class $z \in H^{1}(F, M \otimes \mathrm{CoInd}_{\Delta_{1}}^{\Delta_{2}}(L))$ for some $G_{F}$-module $M$ we write $z(1) \in H^{1}(F, M)$ for the image under the evaluation-at-$1$ map.

 Since $V_{n} \subset J_{G} \times C_{H, n}$ we can push these classes forward to obtain classes (which we abusively refer to with the same notation)
\begin{align*}
    \xi_{\mathcal{U}, n}^{[\underline{j}]} \in &H^{i + c}(Y_{G}(J_{G} \times C_{H, n})_{\Sigma}, \mathscr{A}^{\mathrm{Iw}}_{\mathcal{U}, m}\otimes\underline{\chi}^{\underline{j}})
\end{align*} 

Let $\Pi$ be a cuspidal automorphic representation of $G(\A)$ admitting $p$-stabilisation which defines a really nice $L$-valued point $x$ on the $Q_{G}$-eigenvariety in the sense of Definition \ref{def:reallynice}. Suppose $\underline{\Pi}$ is a Coleman family (in the sense of Definition \ref{def:colemanfam}) passing through $x$, fibred over $\mathcal{U}$, where we allow the case that $\mathcal{U} = \{\lambda\}$ is a single algebraic weight.  We can thus project to the $\underline{\Pi}$-eigenspace $W_{\underline{\Pi}}$ of $H^{i  + c}(Y_{G}(J_{G})_{\Sigma}, \mathscr{A}^{\mathrm{Iw}}_{\mathcal{U},m})$ on which $U_{p}'$ acts via its value $\alpha(\underline{\Pi}) \in \Lambda_{\mathcal{U}}$ under the identification $\mathcal{O}(\underline{\Pi})^{\leq 1} \cong \Lambda_{\mathcal{U}}$ induced by the weight map. 

Applying the Abel--Jacobi map constructed in Section \ref{sec:abeljac} we obtain Galois cohomology classes
$$
     \xi_{\underline{\Pi}, n}^{[\underline{j}]} \in H^{1}(E, W_{\underline{\Pi}}\otimes_{L}\mathrm{Coind}_{1}^{\Delta_{n}}(L)(\kappa_{n})\otimes \underline{\chi}^{\underline{j}}) = H^{1}(E_{n}, W_{\underline{\Pi}}\otimes_{L}\mathrm{Coind}_{\Gamma/\Gamma_{n}}^{\Delta_{n}}(L)\otimes \underline{\chi}^{\underline{j}})
$$
such that the image of $ \xi_{\underline{\Pi}, n + 1}^{[\underline{j}]}$ under $\mathrm{cores}^{E_{n + 1}}_{E_{n}}$ composed with the map on cohomology induced by the natural map $\mathrm{Coind}_{\Gamma/\Gamma_{n}}^{\Delta_{n + 1}}(L) \to \mathrm{Coind}_{\Gamma/\Gamma_{n}}^{\Delta_{n}}(L)$ is given by $\alpha(\underline{\Pi})\xi_{\underline{\Pi}, n}^{[\underline{j}]}$. Note that the cohomology group $H^{1}(E_{n}, W_{\underline{\Pi}}\otimes_{L}\mathrm{Coind}_{\Gamma/\Gamma_{n}}^{\Delta_{n}}(L) \otimes \underline{\chi}^{\underline{j}})$ is naturally acted upon by $ \Delta_{n}$ with the action of the subgroup $\kappa_{n}: \Gamma/\Gamma_{n} \hookrightarrow \Delta_{n}$ coinciding with the natural action of $\Gamma$.  Let $\mathcal{V} \subset \mathcal{U}$ be an affinoid subset so that $W_{\underline{\Pi}}\vert_{\mathcal{V}} = W_{\underline{\Pi}} \hat{\otimes}_{\mathcal{O}(\mathcal{U})}\mathcal{O}(\mathcal{V})$ is a Banach module over $\mathcal{O}(\mathcal{V})$. Set 
$$
   \hat{W}_{\underline{\Pi}}\vert_{\mathcal{V}} = \mathrm{Hom}_{\mathcal{O}(\mathcal{V})}(\mathrm{Hom}_{\mathcal{O}(\mathcal{V})}(W_{\underline{\Pi}}\vert_{\mathcal{V}}, \mathcal{O}(\mathcal{V})), \mathcal{O}(\mathcal{V})),
$$
the reflexive hull of $W_{\underline{\Pi}}\vert_{\mathcal{V}}$. This is a locally free $\mathcal{O}(\mathcal{V})$-module whose specialisations at really nice points coincide with those of $W_{\underline{\Pi}}\vert_{\mathcal{V}}$. Shrinking $\mathcal{V}$ if necessary, we assume that $\hat{W}_{\underline{\Pi}}\vert_{\mathcal{V}}$ is a free $\mathcal{O}(\mathcal{V})$-module, so that we can apply the results of Appendix \ref{app:A}. Set $\lambda = v_{\mathcal{O}(\mathcal{V})}(\alpha(\underline{\Pi})\vert_{\mathcal{V}})$. We let $\hat{\xi}_{\underline{\Pi}, n}^{[\underline{j}]}\vert_{\mathcal{V}}$ be the image of $\xi_{\underline{\Pi}, n}^{[\underline{j}]}$ in $H^{1}(E_{n}, \hat{W}_{\underline{\Pi}}\vert_{\mathcal{V}}\otimes_{L}\mathrm{Coind}_{\Gamma/\Gamma_{n}}^{\Delta_{n}}(L) \otimes \underline{\chi}^{\underline{j}})$.
\begin{definition}
We set 
$$
    x_{n}(\underline{j}) = \alpha(\underline{\Pi})\vert_{\mathcal{V}}^{-n}\hat{\xi}_{\underline{\Pi}, n}^{[\underline{j}]}\vert_{\mathcal{V}}. 
$$
\end{definition}
For every $n \geq 1$ and every $\underline{j} = (j_{1}, \ldots, j_{d})$ we have an injective $\Delta_{\infty}$-equivariant map 
$$
\mathrm{Coind}_{\Gamma/\Gamma_{n}}^{\Delta_{n}}(L) \otimes \underline{\chi}^{\underline{j}} \hookrightarrow \mathrm{Coind}_{\Gamma}^{\Delta_{\infty}}(L)^{\Delta_{\infty} = \underline{\chi}^{\underline{j}}}
$$
defined in the following way: the module $\mathrm{Coind}_{\Gamma}^{\Delta_{\infty}}(L)$ has a natural `twisting' element $\mathrm{Tw}^{[\underline{j}]}$ given by sending a group-like element $[\delta] \in \mathcal{O}[[\Delta_{\infty}]]\otimes L$ to $\underline{\chi}^{\underline{j}}(\delta)$. The above map is then given by pulling back under the natural reduction map $\mathcal{O}[[\Delta_{\infty}]] \to \mathcal{O}[\Delta_{n}]$ and multiplying by $\mathrm{Tw}^{[\underline{j}]}$. By applying the map on cohomology induced by this map with restriction to $E_{\infty}$ we obtain
$$
    x_{n}(\underline{j}) \in H^{1}(E_{\infty}, \hat{W}_{\underline{\Pi}}\vert_{\mathcal{V}}\hat{\otimes}\mathrm{Coind}_{\Gamma}^{\Delta_{\infty}}(L))^{\Delta_{\infty} = \underline{\chi}^{\underline{j}}}
$$
which satisfy 
$$
    \sum_{\delta \in p^{n}\Delta^{0}_{\infty}/p^{n + 1}\Delta^{0}_{\infty}}\underline{\chi}^{\underline{j}}(\delta)\delta \cdot x_{n + 1}(\underline{j}) = x_{n}(\underline{j}). 
$$
\begin{definition}
Let $A$ be a Banach $L$-algebra, $M$ a Banach $A$-module, and let $\Delta$ be an abelian $p$-adic Lie group with fixed open-compact subgroup $\Delta^{0}$ isomorphic to $\Zp^{d}$ for some $d \in \Z_{> 0}$.
\begin{enumerate}
\item Define the $A$-Banach space $\mathscr{C}^{h}(\Delta, L)$ to be the space of functions which are analytic on the subsets $a + p^{h}\Delta^{0}$ for all $a \in \Delta$, equipped with the norm given by taking the supremum over all such subsets. Define 
$$
    \mathscr{C}^{\ell a}(\Delta, L) := \varinjlim_{h}\mathscr{C}^{h}(\Delta, L)
$$
equipped with the inductive limit topology.
\item For $\lambda \in \R_{\geq 0}$ define a Banach space for $d \geq 1$
$$
    \mathscr{C}_{\lambda}\left(\Zp^{d}, L\right)
$$
to be $\hat{\bigotimes}_{i = 1}^{d}\mathscr{C}_{\lambda}\left(\Zp, L\right)$ where $\mathscr{C}_{\lambda}\left(\Zp, L\right)$ is the the Banach space of continuous functions of `classe $\lambda$' as defined in \cite[\S I.5.]{colmez}. In general define 
$$
    \mathscr{C}_{\lambda}\left(\Delta, L\right)
$$
to be the space of continuous functions $f: \Delta \to L$ which are of class $\lambda$ on $a + \Delta_{0}$ for all $a \in \Delta$. 
\item Define 
$$
    \mathscr{D}^{\ell a}(\Delta, M) := \mathrm{Hom}_{A, \mathrm{cont}}(\mathscr{C}^{\ell a}(\Delta, L), M),
$$
the space of continuous $A$-linear functions on $ \mathscr{C}^{\ell a}(\Delta, L)$ taking values in $M$. We call this latter space the space of $M$ -valued \textit{locally analytic distributions} on $\Delta$. It does not depend on the choice of $\Delta^{0}$.
\item Define a subspace 
$$
    \mathscr{D}_{\lambda}(\Delta, M) \subset \mathscr{D}^{\ell a}(\Delta, M)
$$
to be the continuous dual of $\mathscr{C}_{\lambda}(\Delta, L)$. This is an $A$-Banach module which we give the `weak topology' of pointwise convergence on $\mathscr{C}_{\lambda}(\Delta, L)$. 
\item For $h \in \Z_{\geq 0}$ define 
$$
    \mathscr{C}^{[0, h]}(\Delta, L)
$$
to be the space of locally polynomial functions taking values in $L$. 
\item For $h \in \Z_{\geq 0}$ define 
$$
    \mathscr{D}^{[0, h]}(\Delta, M)
$$
to be the $A$ linear functions on $\mathscr{C}^{[0, h]}(\Delta, L)$ taking values in $M$.
\end{enumerate}
\end{definition}
Given an $L$-Banach space $M$ with a continuous action of $\Delta_{\infty}$, define an action of $\Delta_{\infty}$ on  $\mathscr{D}^{\ell a}(\Delta_{\infty}, M)$ for $a \in \Delta_{\infty}$, $f$ locally analytic on $\Delta_{\infty}$ by
$$
    \int_{\Delta_{\infty}}f(x)(a \cdot \mu)(x) := a\cdot \left(\int_{\Delta_{\infty}}f(a^{-1} \cdot x)\mu(x) \right).
$$
\begin{theorem} \label{thm:interpo}
There is a unique Galois  cohomology class
$$
    \mu_{\underline{\Pi}} \in H^{1}(E_{\infty}, \mathscr{D}_{\lambda}(\Delta_{\infty}, \hat{W}_{\underline{\Pi}}\vert_{\mathcal{V}}))^{\Gamma}
$$
satisfying
$$
    \int_{p^{n}\Delta^{0}_{\infty}}\underline{\chi}^{\underline{j}}(\delta)\mu_{\Pi_{\mathcal{U}}}(\delta) = x_{n}(\underline{j})(1)
$$
for all $n \geq 1$ and $\underline{j} = (j_{1}, \ldots, j_{d}) \in \Z_{\geq 0}^{d}$. 
\end{theorem}
\begin{proof}
The compatibility between the classes $x_{n}(\underline{j})$ as $n$ varies allows us to define, for $h \geq \lfloor \lambda \rfloor$, a unique cohomology class
$$
    \mu_{\mathrm{alg}}[h] \in H^{1}(E_{\infty}, \mathscr{D}^{[0, h]}(\Delta_{\infty}, \hat{W}_{\Pi_{\mathcal{V}}}\hat{\otimes} \mathrm{Coind}_{\Gamma}^{\Delta_{\infty}}(L)))^{\Delta_{\infty}}
$$
satisfying 
$$
     \int_{p^{n}\Delta^{0}}\underline{\chi}^{\underline{j}}(\delta)\mu_{\mathrm{alg}}[h](\delta) = x_{n}(\underline{j})
$$
for all $0 \leq j_{1} + \cdots + j_{d} \leq h$ and all $n \geq 1$. 
We have 
\begin{align*}
    H^{1}(E_{\infty}, \mathscr{D}^{[0,h]}(\Delta_{\infty}, \hat{W}_{\Pi_{\mathcal{V}}}) \hat{\otimes} \mathrm{Coind}^{\Delta_{\infty}}_{\Gamma}(L))^{\Delta_{\infty}} &= \left(\left(H^{1}(E_{\infty}, \mathscr{D}^{[0,h]}(\Delta_{\infty}, \hat{W}_{\Pi_{\mathcal{V}}})) \hat{\otimes} \mathrm{Coind}^{\Delta_{\infty}}_{\Gamma}(L)\right)^{\Gamma}\right)^{\Gamma\backslash\Delta_{\infty}} \\
    &=\left(H^{1}(E_{\infty}, \mathscr{D}^{[0,h]}(\Delta_{\infty}, \hat{W}_{\Pi_{\mathcal{V}}}))^{\Gamma} \hat{\otimes} \mathrm{Coind}^{\Delta_{\infty}}_{\Gamma}(L)\right)^{\Gamma\backslash\Delta_{\infty}}  \\
    &= H^{1}(E_{\infty}, \mathscr{D}^{[0,h]}(\Delta_{\infty}, \hat{W}_{\Pi_{\mathcal{V}}}))^{\Gamma},
\end{align*}
where the third equality becomes clear when one notes that 
$$
    H^{1}(E_{\infty}, \mathscr{D}^{[0,h]}(\Delta_{\infty}, \hat{W}_{\Pi_{\mathcal{V}}}))^{\Gamma} \hat{\otimes} \mathrm{Coind}^{\Delta_{\infty}}_{\Gamma}(L) = \mathrm{Hom}_{\mathcal{O}[[\Gamma]] \otimes L}(\mathcal{O}[[\Delta_{\infty}]] \otimes L, H^{1}(E_{\infty}, \mathscr{D}^{[0,h]}(\Delta_{\infty}, \hat{W}_{\Pi_{\mathcal{V}}}))^{\Gamma}),
$$
so we end up with classes 
$$
    \mu_{\mathrm{alg}}[h] \in H^{1}(E_{\infty}, \mathscr{D}^{[0,h]}(\Delta_{\infty}, \hat{W}_{\Pi_{\mathcal{V}}}))^{\Gamma}
$$
and by tracing through the above isomorphisms we see that these classes satisfy 
$$
    \int_{p^{n}\Delta^{0}_{\infty}}\underline{\chi}^{\underline{j}}(\delta)\mu_{\mathrm{alg}}[h](\delta) = x_{n}(\underline{j})(1)
$$
By Appendix \ref{app:A}, Lemma \ref{lem:cong} and Proposition \ref{prop:bound} the classes $x_{n}(\underline{j})(1)$ satisfy 
$$
       \norm{p^{-hn}\sum_{\substack{j_{i} \leq h_{i}\\ i = 1, \ldots, d}}\binom{h_{1}}{j_{1}}\cdots\binom{h_{d}}{j_{d}}(-1)^{\sum_{i = 1}^{d}j_{i}}x_{n}(\underline{j})(1)} \leq Cp^{\lfloor \lambda n \rfloor},
$$
where the norm $\norm{\cdot}$ is the one induced by the norm on $\hat{W}_{\underline{\Pi}}\vert_{\mathcal{V}}$ and so we can extend to a class 
$$
    \mu_{\underline{\Pi}}[h] \in H^{1}(E_{\infty}, \mathscr{D}_{\lambda}(\Delta_{\infty}, \hat{W}_{\underline{\Pi}}\vert_{\mathcal{V}}))^{\Gamma}
$$
uniquely defined by the interpolation property
$$
    \int_{p^{n}\Delta^{0}_{\infty}}\underline{\chi}^{\underline{j}}(\delta)\mu_{\Pi_{\mathcal{U}}}(\delta) = x_{n}(\underline{j})(1)
$$
for all $n \geq 1$ and $\underline{j} \in \Z_{\geq 0}^{d}$ satisfying $0 \leq j_{1} + \ldots + j_{d} \leq h$. Now we note that taking $h' > h$ produces a class $\mu_{\Pi_{\mathcal{V}}}[h']$ satisfying a strictly stronger interpolation property, whence we conclude (by unicity) that $\mu_{\underline{\Pi}}[h]$ is independent of $h$ and we write $\mu_{\underline{\Pi}}$ for this class. 
\end{proof}
\section{Application to $\mathrm{GSp}_{4}$} \label{sec:application}
\subsection{Eisenstein classes} \label{sec:eis}
We briefly recall the definition of Beilinson's Eisenstein classes and their $p$-adic interpolations due to Kings. The Eisenstein classes provide the primordial example of classes satisfying the relations of $z_{n}^{H}$ in Theorem \ref{thm:normrelation} and are a fundamental input to the construction of the Euler system of Kato \cite{kato} as well as the Euler system of Beilinson--Flach elements of Lei--Loeffler--Zerbes \cite{lei2014euler} and the Lemma--Flach Euler system of Loeffler--Zerbes \cite{LZGsp}. 

Set $H = \mathrm{GL}_{2}$ considered as a reductive group over $\Z$, let $T_{H}$ the maximal torus of diagonal matrices, $B_{H}$ be the Borel subgroup of upper triangular matrices with Levi decomposition $B_{H} = T_{H} \times N_{H}$, and set $Q_{H} = B_{H}$. 
\begin{definition}
For a locally profinite group $K$ define 
$$
\mathcal{S}(K, \mathbb{Z}_{p}) = \{\phi: K \to \Zp: \text{$\phi$ locally constant with compact support}\},
$$
the space of $\mathbb{Z}_{p}$-valued \textit{Schwartz functions} on $K$. If furthermore $K$ admits the structure of a topological ring with respect to its locally profinite topology, then we equip the spaces $\mathcal{S}(K^{2}, \Zp), \mathcal{S}_{0}(K^{2}, \Zp)$ with an action of $H(K)$ given by 
$$
    (h \cdot \phi)\left ( \left(k_{1}, k_{2}\right) \right) = \phi\left((k_{1}, k_{2}) \cdot h\right).
$$
We define the following subspaces:
\begin{itemize}
\item Denote by $\mathcal{S}_{0}(K, \Zp)$ the subspace of $\mathcal{S}(K^{2}, \Zp)$ consisting of of $\phi: K \to \Zp$ satisfying $\phi(1) = 0$. This subspace is preserved by the above action of $H(K)$. 
\item For an integer $c$ coprime to $6p$, let $_{c}\mathcal{S}_{0}((\mathbb{A}^{(p)}_{f} \times \mathbb{Z}_{p})^{2}, \mathbb{Z}_{p})$ denote the subspace of $\mathcal{S}_{0}((\mathbb{A}^{(p)}_{f} \times \mathbb{Z}_{p})^{2}, \mathbb{Z}_{p})$ consisting of elements of the form $\phi^{(c)} \otimes \mathrm{ch}(\mathbb{Z}_{c}^{2})$, where $\phi^{(c)}$ is a $\mathbb{Z}$-valued Schwartz function on $(\mathbb{A}_{f}^{(c)})^{2}$ and $\mathbb{Z}_{c} = \prod_{\ell \mid c}\mathbb{Z}_{\ell}$. This admits an action of $H(\A_{f}^{(pc)} \times \Zp \times \Z_{c})$ as above. 
\end{itemize}
\end{definition}

The irreducible algebraic representations of $H$ are parameterised by pairs of integers $(k, m)$ with $k \geq 0$ and are given by $V_{k, m} := \mathrm{Sym}^{k}V_{\mathrm{std}} \otimes \mathrm{det}^{m}$, where $V_{\mathrm{std}} = \Qp^{2}$ with the standard left action of $H(\Qp)$. Let $\mathscr{V}_{k}(m)$ be the \'etale sheaf associated to $V_{k, m}$ and let $\mathscr{V}_{k, \Zp}(m)$ be the \'etale sheaf associated to the minimal admissible lattice.

\begin{definition}
Let $k \geq 0$. For $c$ as above and $U \subset H(\mathbb{A}^{(pc)}_{f} \times \mathbb{Z}_{pc})$ a neat open compact subgroup define a map \begin{align*}
    {_{c}}\mathrm{Eis}^{k}: {_{c}}\mathcal{S}_{0}((\mathbb{A}^{(p)}_{f} \times \mathbb{Z}_{p})^{2}, \mathbb{Z}_{p})^{U} \to H^{1}(Y_{H}(U), \mathscr{V}_{k, \Zp}(1))
\end{align*}
where for $\phi \in _{c}\mathcal{S}_{0}((\mathbb{A}^{(p)}_{f} \times \mathbb{Z}_{p})^{2}, \mathbb{Z}_{p})^{U}$,  $_{c}\mathrm{Eis}^{ k}(\phi)$ is the element defined in \cite[Definition 3.3.6]{kings2015eisenstein} whose image in $H^{1}(Y_{H}(U), \mathscr{V}_{k}(1))$ satisfies 
$$
    _{c}\mathrm{Eis}^{k}(\phi) = \left(c^{2} - c^{-k}\begin{pmatrix} c & \\ & c \end{pmatrix}^{-1} \right)r(\mathrm{Eis}_{\mathrm{mot}, }^{k}(\phi)), 
$$
where $\mathrm{Eis}_{\mathrm{mot}}^{k}(\phi)$ is Beilinson's motivic Eisenstein class, defined in \cite[\S 3]{beilinson1986higher}, and $r$ is the \'etale regulator map. 
\end{definition}

By a deep result of Kings \cite[Proposition 3.3.5]{kings2015eisenstein} these classes interpolate as $k$ varies. Set 
$$
K_{p,n} = \{g \in \mathrm{GL}_{2}(\Zp): g \equiv \begin{psmallmatrix}
    * & * \\ & 1
\end{psmallmatrix} \ \mathrm{mod} \ p^{n}\}
$$
and fix a prime-to-$p$ level group $K^{(p)} \subset G(\A^{(p)}_{f})$ such that the product $K_{n} := K^{(p)}K_{p, n}$ is neat. Let $\phi_{n} = \phi^{(p)} \otimes \mathrm{ch}(p^{n}\Zp \times \left(1 + p^{n}\Zp\right)) \in \mathcal{S}(\A_{f}, \Z_{p})^{K_{n}}$ where $\phi^{(p)} \in \mathcal{S}(\A_{f}^{(p)}, \Zp)^{K^{(p)}}$. We write
$$
    {_{c}}\mathrm{Eis}_{\acute{e}t, n}^{k} := {_{c}}\mathrm{Eis}^{k}(\phi_{n}) \in H^{1}(Y_{H}(K_{n}), \mathscr{V}_{k, \Zp}(1)).
$$
\begin{definition} \label{def:eisiw}
Let $\Sigma$ be a set of primes including $p$ and the primes at which $K^{(p)}$ ramifies. Define the \textit{Eisenstein--Iwasawa} class to be the projective limit
$$
    {_{c}}\mathcal{EI} = \varprojlim_{n} {_{c}}\mathrm{Eis}_{n}^{0} \in H^{1}_{\mathrm{Iw}}(Y_{H}(K_{\infty})_{\Sigma}, \Zp(1))
$$
where $K_{\infty} = \cap _{n}K_{n}$. 
\end{definition}
\begin{remark}
    The class ${_{c}}\mathrm{Eis}^{0}_{n}$ coincides with the image of the \textit{Siegel unit} $_{c}g_{\phi_{n}}$  (c.f. \cite[7.1.2]{LZGsp}) under the Kummer map $\mathcal{O}(Y_{H}(K_{n}))^{\times} \to H^{1}(Y_{H}(K_{n}), \mathscr{V}_{k, \Zp}(1))$. It is clear from the construction given in \cite[Proposition 1.3]{kato} that these classes extend over the integral model $Y_{H}(K_{n})_{\Sigma}$ and so it makes sense to consider ${_{c}}\mathrm{Eis}^{0}_{n}$ as an element of $H^{1}(Y_{H}(K_{n})_{\Sigma}, \mathscr{V}_{k, \Zp}(1))$.
\end{remark}
These classes satisfy $\mathrm{mom}^{k}_{n}\left({_{c}}\mathcal{EI}\right) = {_{c}}\mathrm{Eis}_{\acute{e}t, n}^{k}$, where $\mathrm{mom}^{k}_{n}: H^{1}_{\mathrm{Iw}}(Y_{H}(K_{\infty})_{\Sigma}, \Zp(1)) \to H^{1}(Y_{H}(K_{n}), \mathscr{V}_{k, \Zp}(1))$ is given by the following composition: 
\begin{align*}
    H^{1}_{\mathrm{Iw}}(Y_{H}(K_{\infty})_{\Sigma}, \Zp(1)) &= \varprojlim_{s} H^{1}(Y_{H}(K_{s})_{\Sigma}, \Z/p^{s}\Z(1)) \\ 
    &\xrightarrow{\cup f_{k} \ \mathrm{mod} \ p^{s}} \varprojlim_{s} H^{1}(Y_{H}(K_{s})_{\Sigma}, \mathscr{V}_{k,\Zp}/p^{s}(1)) \\
    &\to \varprojlim_{s} H^{1}(Y_{H}(K_{n})_{\Sigma}, \mathscr{V}_{k,\Zp}/p^{s}(1)) \\
    &\cong  H^{1}(Y_{H}(K_{n})_{\Sigma}, \mathscr{V}_{k,\Zp}(1)) \\
    &\to H^{1}(Y_{H}(K_{n}), \mathscr{V}_{k,\Zp}(1))
\end{align*}
where the first map is projection to the $n$th layer of the inverse limit, the second is the inverse limit over cup products with the  mod $p^{n}$ reduction of the highest weight vector $f_{k} \in V_{k, 0}$, the third map is projection to level $K_{n}$ and the final map is restriction to the generic fibre.

\subsection{The Lemma--Flach Euler system}
In this section let $\Q_{n} = \Q(\zeta_{p^{n}})$, $\Q_{\infty} = \cup_{n}\Q(\zeta_{p^{n}})$ and set $\Gamma = \mathrm{Gal}(\Q_{\infty}/\Q)$, $\Gamma_{n} = \mathrm{Gal}(\Q_{\infty}/\Q_{n})$. Let 
$G = \mathrm{GSp}_{4}$, with the explicit presentation
$$
    G(R) = \{(g, \mu(g)) \in \mathrm{GL}_{4}(R) \times R^{\times}: gJg^{t} = \mu(g)J\}
$$
for $J = \begin{psmallmatrix}
    & & & 1 \\
    & & 1 & \\
    & -1 & & \\
    -1 & & & 
\end{psmallmatrix}$ and a $\Z$-algebra $R$. Set
$H = \mathrm{GL}_{2} \times _{\mathrm{GL}_{1}}\mathrm{GL}_{2}$. We consider the inclusion 
$$
    \iota: H \to G
$$
given by
$$
    \begin{psmallmatrix}
        a & b \\ c & d
    \end{psmallmatrix} \times \begin{psmallmatrix}
        a' & b' \\ c' & d'
    \end{psmallmatrix} \mapsto \begin{psmallmatrix}
        a & & & b \\ & a' & b' & \\ & c' & d' & \\ c & & & d
    \end{psmallmatrix}
$$
under which these groups form a spherical pair $(G, H)$ for the upper triangular Borel subgroup $B_{G} \subset G$. The subgroup
$$
    T_{G} := \{\begin{psmallmatrix}
        x &  &  &  \\
        & y &  &  \\
        & & \lambda y^{-1} &  \\
        & & & \lambda x^{-1}
    \end{psmallmatrix} : x, y, \lambda \in \mathbb{G}_{m}\} \subset G
$$
is a maximal torus, and we have the Levi decomposition $B_{G} = T_{G} \ltimes N_{G}$ given pictorially by 
$$
    \begin{psmallmatrix}
        * & * & * & * \\
        & * & * & * \\
        & & * & * \\
        & & & *
    \end{psmallmatrix} = \begin{psmallmatrix}
        * &  &  &  \\
        & * &  &  \\
        & & * &  \\
        & & & *
    \end{psmallmatrix} \ltimes \begin{psmallmatrix}
        1& * & * & * \\
        & 1 & * & * \\
        & & 1 & * \\
        & & & 1
    \end{psmallmatrix}
$$
Similarly, let $B_{H} = T_{H} \ltimes N_{H}$ be the Levi decomposition of the upper triangular Borel of $H$ given by 
$$
    \begin{pmatrix}
        * & * \\ & * 
    \end{pmatrix} \times \begin{pmatrix}
        * & * \\ & * 
    \end{pmatrix} = \begin{pmatrix}
        * &  \\ & * 
    \end{pmatrix} \times \begin{pmatrix}
        * & \\ & * 
    \end{pmatrix}  \ltimes \begin{pmatrix}
        1 & * \\ & 1 
    \end{pmatrix} \times \begin{pmatrix}
        1 & * \\ & 1 
    \end{pmatrix}
$$
\begin{remark}
    In the notation of the main body of the paper we are taking 
    $$
    Q_{G} = B_{G}, Q_{G}^{0} = N_{G}, L_{G} = T_{G}, L_{G}^{0} = \{1\},
    $$
    and 
    $$
        Q_{H} = B_{H}, L_{H} = T_{H}. 
    $$
\end{remark}
Let $\chi_{1}, \chi_{2} \in X^{\bullet}(T_{G})$ be the projections to the first and second entries of $T_{G}$ respectively. Then $X^{\bullet}(T_{G})$ is generated by $\chi_{1}, \chi_{2}, \mu$ and we identify
$$
    X^{\bullet}(T_{G}) \cong \{(r_{1}, r_{2}; c): r_{1}, r_{2} ,c \in \Z\}
$$
by sending $\chi_{1} \mapsto (1, -1; 0), \chi_{2} \mapsto (0, 1; 0), \mu \mapsto (0, 0; 1)$. Under this isomorphism we have 
$$
    X^{\bullet}_{+}(T_{G}) = \{(r_{1}, r_{2};c): r_{1} \geq r_{2}\}
$$
For $(r_{1}, r_{2}; c) \in X^{\bullet}_{+}(T_{G})$ let $V(r_{1}, r_{2}; c)$ denote the associated irreducible algebraic representation. For $t_{1}, t_{2} \in \Z_{\geq 0}, s \in \Z$ let $W(t_{1}, t_{2}; e) := V_{t_{1}, 0}\boxtimes V_{t_{2}, 0} \otimes \det^{s}$ (recalling the notation of Section \ref{sec:eis}) be the irreducible representation of $H$ of weight $(t_{1}, t_{2}; s)$.
We have the explicit branching law
$$
    V(r_{1}, r_{2}; c) \vert_{H} = \bigoplus_{0 \leq q \leq r_{2}}\bigoplus_{0 \leq r \leq r_{1} - r_{2}}W(r_{1} - q - r, r_{2} - q + r; q + c) 
$$
 depending on the four parameters $r_{1}, r_{2}, q ,r \geq 0$ (c.f. \cite[Proposition 4.3.1]{LZGsp}). Let $V_{\Zp}(r_{1}, r_{2}; c)$ be the \textit{maximal} admissible lattice in $V(r_{1}, r_{2}; c)$ and let $W_{\Zp}(t_{1}, t_{2}; s)$ be the \textit{minimal} admissible lattice in $W(t_{1}, t_{2}; s)$, given explicitly by $\mathrm{TSym}^{t_{1}} \Zp^{2} \boxtimes \mathrm{TSym}^{t_{2}}\Zp^{2} \otimes \det^{s}$, where for $k \geq 0, \mathrm{TSym}^{k}$ of a representation is the space of $k$-fold symmetric tensors.

 The explicit branching law above has $4$ parameters, however, the machine we have constructed for interpolating algebraic branching laws gives classes varying over a weight space for the torus $Q_{G}/Q_{G}^{0} = B_{G}/N_{G} = T_{G}$, which has rank $3$. In order to vary all $4$ parameters of the branching law we need to modify the groups $G$ and $H$ slightly. 
Set 
$$
    \tilde{H} = H \times \mathrm{GL}_{1}, \ \tilde{G} = G \times C_{\tilde{H}}.
$$
and define $B_{\tilde{G}} := B_{G} \times C_{\tilde{H}}, T_{\tilde{G}} = T_{G} \times C_{\tilde{H}}$ and 
$$
    Q_{\tilde{H}}^{0} = \{\begin{psmallmatrix}
        x & * \\ & 1
    \end{psmallmatrix} \times \begin{psmallmatrix}
        xy & * \\ & y^{-1}
    \end{psmallmatrix} \times (y) : x, y \in \mathbb{G}_{m}\}.
$$
The embedding $\iota$ extends to an embedding
\begin{align*}
    \tilde{\iota}: \tilde{H} &\to \tilde{G} = G \times C_{H} \times \mathrm{GL}_{1} \\
    (h, z) &\mapsto (\iota(h), \det(h), z).
\end{align*}
 We let $\sigma: \tilde{G} \to \mathrm{GL}_{1}$ denote the projection to the auxiliary $\mathrm{GL}_{1}$ factor. Define irreducible $\tilde{G}, \tilde{H}$ representations
 $$
 \tilde{V}(r_{1}, r_{2}) = V(r_{1}, r_{2}; -(r_{1} + r_{2}))
 , \tilde{W}(t_{1}, t_{2}) = W(t_{1}, t_{2}; -(t_{1} + t_{2})) \otimes \sigma^{t_{2}}.
 $$
 and let $\tilde{V}_{\Zp}(r_{1}, r_{2};c)$ ( resp. $\tilde{W}_{\Zp}(t_{1}, t_{2})\otimes \sigma^{t_{2}}$) be the maximal (resp. minimal) lattices. A quick computation shows that the highest weight vector of $\tilde{W}(t_{1}, t_{2})$ is invariant under $Q_{\tilde{H}}^{0}$ and thus it follows from the branching law that $\tilde{V}(r_{1}, r_{2}) \otimes \mu^{q} \otimes \sigma^{r - q}$ contains a $Q_{\tilde{H}}^{0}$-invariant vector $f^{[r_{1},r_{2},q,r]}$ for each $0 \leq q \leq r_{2}, 0 \leq r \leq r_{1} - r_{2}$. There is thus a unique normalised $\tilde{H}$-equivariant branching map 
 $$
    \mathrm{br}^{[r_{1},r_{2},q,r]}: \tilde{W}(r_{1} - q - r, r_{2} -q + r) \to \tilde{V}(r_{1}, r_{2}) \otimes \mu^{q} \otimes \sigma^{r - q}
 $$
 which restricts to a map on lattices
 $$
    \mathrm{br}^{[r_{1},r_{2},q,r]}: \tilde{W}_{\Zp}(r_{1} - q - r, r_{2} -q + r)\to \tilde{V}_{\Zp}(r_{1}, r_{2}) \otimes \mu^{q} \otimes \sigma^{r - q}.
 $$
\begin{remark}
The extra $\mathrm{GL}_{1}$-factor is an instance of the construction of Section \ref{sec:superf} where we have appended to our groups the smaller torus $S_{H}^{0} \cap H^{\mathrm{der}} \subset S_{H}^{0}$ since $C_{H}$ already accounts for the variation in the parameter $q$.  
\end{remark}
The flag variety $\mathcal{F}_{\tilde{G}} = \tilde{G}/B_{\tilde{G}}$ is naturally isomorphic to $\mathcal{F}_{G} = G/B_{G}$ and $\tilde{H}$ has an open orbit on this space i.e. the pair $(\tilde{G}, \tilde{H})$ is spherical. The group $Q_{\tilde{H}}^{0}$ has an open orbit on $\mathcal{F}_{\tilde{G}} = \mathcal{F}_{G}$ with representative $u = \begin{psmallmatrix}
    1 & 1 & 1 & \\ & 1 & & 1 \\ & & 1 & -1 \\ & & & 1
\end{psmallmatrix}$. For $n \geq 1$ we set $U_{n}, V_{n} \subset G(\Zp)$ to be the level groups at $p$ corresponding to $B_{G}$ and we set $\tilde{U}_{n}, \tilde{V}_{n} \subset \tilde{G}(\Zp)$ to be the level groups at $p$ corresponding to $B_{\tilde{G}}$.

We take compatible Shimura data for the pair $(\tilde{G}, \tilde{H})$  to be that defined by the map $\mathrm{Res}_{\C/\R}\mathbb{G}_{m} \to \tilde{H}$ given by
$$
    a + bi \mapsto \frac{1}{a^{2} + b^{2}}\begin{pmatrix}
        a & b \\ -b & a
    \end{pmatrix} \times \frac{1}{a^{2} + b^{2}}\begin{pmatrix}
        a & b \\ -b & a
    \end{pmatrix} \times \{1\}. 
$$
We fix a set of primes $S$ not containing $p$ and auxiliary data $K_{S}, \underline{\phi}_{S}, W_{S}, c_{1}, c_{2}$ as in \cite[\S 8.4.2]{LZGsp} and take the test data away from $S$ to be the spherical test data (for simplicity we're taking the tame level $M = 1$). Let $\Sigma = pS$. The test data determines a prime-to-$p$ level subgroup and the Shimura varieties of this level and Siegel parahoric level at $p$ admit models over $\Z[\Sigma^{-1}]$. The test data along with the branching map $\mathrm{br}^{[r_{1},r_{2},q,r]}$
allows us to define classes 
$$
    {_{c_{1}, c_{2}}}z^{[r_{1},r_{2},q,r]}_{0} =\iota_{*} \circ \mathrm{br}^{[r_{1}, r_{2}, q, r]}\left( {_{c_{1}, c_{2}}}\mathrm{Eis}(\phi) \right) \in H^{4}(Y_{G}(G(\Zp))_{\Sigma}, V_{\Zp}(r_{1}, r_{2}, -(r_{1} + r_{2}))(-q))
$$
for $0 \leq q \leq r_{2}, 0 \leq r \leq r_{1} - r_{2}$.

The monoid $T^{-} := A^{-}  = \{t \in T(\Qp): t^{-1}N_{G}(\Zp)t \subset N_{G}(\Zp)\}$ is generated by the elements 
$$
 \tau_{S}^{-1} = \begin{psmallmatrix}
                    p^{-1} & & & \\ & p^{-1} & & \\
                    & & 1 & \\ & & & 1
                \end{psmallmatrix}, \tau_{\mathcal{K}}^{-1} = \begin{psmallmatrix}
                    p^{-2} & & & \\ & p^{-1} & & \\
                    & & p^{-1} & \\ & & & 1
                \end{psmallmatrix}
$$
associated to the standard Siegel and Klingen parabolic subgroups (the maximal parabolic subgroups of $G$ containing $B_{G}$) respectively. These define double coset operators $U'_{S}$ and $U'_{\mathcal{K}}$ respectively. We write $U'_{B} = U'_{S}U'_{\mathcal{K}}$, the double coset operator associated to the Borel subroup $B_{G}$. 

Let $\Pi$ be a non-endoscopic cohomological cuspidal automorphic representation $\Pi$ of $G(\A)$ with coefficients in $L$ whose infinitesimal character agrees with that of an algebraic representation of weight $(r_{1}, r_{2})$ with $r_{1} \geq r_{2} \geq 0$, and suppose $\Pi$ is unramified at $p$ with Satake parameters $\alpha, \beta, \gamma, \delta$ which (after choosing an isomorphism $\C \cong \overline{\Q}_{p}$) are ordered by increasing valuation. Let $\mathfrak{m}_{\Pi}$ denote the maximal ideal of $\mathbb{T}_{S, p}^{-} \otimes L$ associated to $\Pi$ and assume that $\mathfrak{m}_{\Pi}$ is non-critical in the sense of Definition \ref{def:noncrit}. Our assumption that $\Pi$ is non-endoscopic ensures that $ H^{i}(Y_{G}(J_{G}), V^{G}_{\lambda})_{\mathfrak{m}_{\Pi}} = 0$ for $i \neq 3$.

For $\Pi$ as above work of Taylor \cite{taylor1989galois} and Weissauer \cite{weissauer} associates a four-dimensional Galois representation $W_{\Pi}$ to $\Pi$ by taking the $\Pi$ eigenspace in parabolic \'etale cohomology at infinite level c.f. \cite[Theorem 10.2.2]{LZGsp}. Under our assumptions on $\Pi$ this eigenspace is identified with the $\Pi$ eigenspace in $\varprojlim_{K}H^{3}(Y_{G}(K), V^{G}_{\lambda}) \otimes_{\Qp}\overline{\Q}_{p}$, where the limit is taken over open-compact subgroups of $G(\A_{f})$. We can then define classes 
$$
    {_{c_{1}, c_{2}}}z_{\Pi, n}^{[q, r]} := \mathrm{AJ}^{\mathrm{cl}}_{\mathfrak{m}_{\Pi}}\left({_{c_{1}, c_{2}}}z^{[r_{1},r_{2},q,r]}_{0}\right) \in H^{1}(\Q_{n}, W_{\Pi}(-q))
$$
for each $0 \leq q \leq r_{2}, 0 \leq r \leq r_{1} - r_{2}$, where $\mathrm{AJ}^{\mathrm{cl}}_{\mathfrak{m}_{\Pi}}$ is the Abel--Jacobi map defined at the beginning of Section \ref{sec:abeljac}. We note that (c.f. Remarks \ref{rem:rn}) our assumption that $\Pi$ is non-endoscopic is sufficient for the construction of $\mathrm{AJ}^{\mathrm{cl}}_{\mathfrak{m}_{\Pi}}$.

\begin{remark}
    This construction technically depends on a `modular parameterisation': a choice of functional $\Pi_{f}^{*} \to \overline{\Q}_{p}$ c.f. \cite[\S 10.4]{LZGsp}, which we have suppressed from the construction. This choice can be avoided by using the paramodular newform theory of Brooks--Roberts \cite{localnewf}. 
\end{remark}
\subsection{Lemma--Flach classes in families} \label{sec:lemfam}
In order to construct classes interpolating the Lemma--Flach classes we need an input 
$$
    z_{\tilde{H}} \in H^{2}_{\mathrm{Iw}}(Y_{\tilde{H}}(Q_{\tilde{H}}^{0})_{\Sigma}, \Zp(2)).
$$
We construct this using the Eisenstein--Iwasawa classes of Definition \ref{def:eisiw}. Let $c_{1}, c_{2}$ be as in the previous section, then by taking the cup product of two Eisenstein--Iwasawa classes we obtain a class 
$$
    {_{c_{1}}}\mathcal{EI} \cup {_{c_{2}}}\mathcal{EI} \in H^{2}_{\mathrm{Iw}}(Y_{\mathrm{GL}_{2}}(K_{\infty})_{\Sigma}^{2}, \Zp(2)). 
$$
There is a natural map 
\begin{align*}
    Q_{\tilde{H}}^{0} &\to K_{\infty} \times K_{\infty} \\
    \begin{psmallmatrix}
        x & * \\ & 1
    \end{psmallmatrix} \times \begin{psmallmatrix}
        xy & *  \\ & y^{-1}
    \end{psmallmatrix}  \times (y^{-1}) &\mapsto \begin{psmallmatrix}
        x & * \\ & 1
    \end{psmallmatrix} \times \begin{psmallmatrix}
        x & * \\ & 1
    \end{psmallmatrix} 
\end{align*}
which induces a pullback map 
$$
    H^{2}_{\mathrm{Iw}}(Y_{\mathrm{GL}_{2}}(K_{\infty})^{2}, \Zp(2)) \to H^{2}(Y_{\tilde{H}}(Q_{\tilde{H}}^{0}), \Zp(2))
$$
and we define 
$$
    {_{c_{1}, c_{2}}}\mathcal{EI}_{\tilde{H}} \in H^{2}_{\mathrm{Iw}}(Y_{\tilde{H}}(Q_{\tilde{H}}^{0}), \Zp(2)) 
$$
to be the pullback of the class ${_{c_{1}}}\mathcal{EI} \cup {_{c_{2}}}\mathcal{EI}$.
\begin{definition}
For $n \geq 1$ we write ${_{c_{1}, c_{2}}}z^{\tilde{H}}_{n} \in H^{2}_{\mathrm{Iw}}(Y_{\tilde{H}}(Q_{\tilde{H}}^{0} \cap u\tilde{U}_{n}u^{-1})_{\Sigma}, \Zp(2))$ for the pullback of ${_{c_{1}, c_{2}}}\mathcal{EI}^{H}_{n}$to level $Q_{\tilde{H}}^{0} \cap u\tilde{U}_{n}u^{-1}$. 
\end{definition}
Taking ${_{c_{1}, c_{2}}}z^{\tilde{H}}_{n}$ as the input of Theorem \ref{thm:normrelation} we obtain classes
$$
    {_{c_{1}, c_{2}}}\xi_{\mathcal{U}, n}^{[q, r]} \in H^{4}(Y_{\tilde{G}}(\tilde{V}_{n})_{\Sigma}, \mathscr{A}^{\mathrm{Iw}}_{\mathcal{U}, m} \otimes (\mu/\sigma)^{q} \otimes \sigma^{r})
$$
satisfying the norm relation 
$$
    (\mathrm{pr}^{n + 1}_{n})_{*}\left( {_{c_{1}, c_{2}}}\xi^{[q,r]}_{\mathcal{U}, n + 1}\right) = U'_{B}{_{c_{1}, c_{2}}}\xi^{[q,r]}_{\mathcal{U}, n},
$$
where $(\mathrm{pr}^{n + 1}_{n})_{*}: H^{4}(Y_{\tilde{G}}(\tilde{V}_{n + 1}), \mathscr{A}^{\mathrm{Iw}}_{\mathcal{U}, m} \otimes (\mu/\sigma)^{q} \otimes \sigma^{r}) \to H^{4}(Y_{\tilde{G}}(\tilde{V}_{n}), \mathscr{A}^{\mathrm{Iw}}_{\mathcal{U}, m} \otimes (\mu/\sigma)^{q} \otimes \sigma^{r})$ is the natural pushforward map induced by the (finite index) inclusion $\tilde{V}_{n + 1} \hookrightarrow \tilde{V}_{n}$. 
\begin{lemma} \label{lem:analytic}
For any $q, r, h \geq 0$ and $h_{1}, h_{2}$ satisfying $h_{1} + h_{2} = h$ the classes $ {_{c_{1}, c_{2}}}\xi_{\mathcal{U},n}^{[q, r]}$ satisfy 
$$
    \sum_{j_{1} \leq h_{1}, j_{2} \leq h_{2}}(-1)^{j_{1} + j_{2}}\binom{h_{1}}{j_{1}}\binom{h_{2}}{j_{2}}(\mathrm{pr}_{n}^{hn})^{*}( {_{c_{1}, c_{2}}}\xi_{\mathcal{U},n}^{[j_{1}, j_{2}]}) \cup e_{(\mu/\sigma)^{-j_{1}}\sigma^{-j_{2}}, hn} \equiv 0 \ \mathrm{mod} \ p^{hn},
$$
where $e_{(\mu/\sigma)^{-j_{1}}\sigma^{-j_{2}}, hn}$ is a basis element for $H^{0}(Y_{\tilde{G}}(\tilde{V}_{hn}), \Z/p^{hn}\Z\otimes (\mu/\sigma)^{-j_{1}}\sigma^{-j_{2}})$.  
\end{lemma}
\begin{proof}
The characters $\mu/\sigma$ and $\sigma$ are trivial upon restriction to $Q_{G}^{0} = \{1\}$. The result then follows from Corollary \ref{lem:cong}. 

\end{proof}
As in \cite[Section 4.6]{loefflerspherical} we push forward $  {_{c_{1}, c_{2}}}\xi_{\mathcal{U}, n}^{[q, r]}$ along the inclusion
$$
    \tilde{V}_{n} \subset J_{G} \times C_{\tilde{H}, n}
$$
to obtain classes 
$$
    {_{c_{1}, c_{2}}}\tilde{z}_{G, n}^{[q,r]} \in H^{4}(Y_{G}(J_{G} \times C_{\tilde{H},n})_{\Sigma}, \mathscr{A}^{\mathrm{Iw}}_{\mathcal{U}, m} \otimes (\mu/\sigma)^{q} \otimes \sigma^{r})
$$
satisfying the same norm compatibility as $_{c_{1}, c_{2}}\xi_{\mathcal{U}, n}^{[q, r]}$ as $n$ varies. Let 
$$
    \Delta_{n} = \mathcal{C}_{\tilde{H}}(\Q)\backslash \mathcal{C}_{\tilde{H}}(\A)/C_{\tilde{H},n} \cdot \mathcal{C}^{p} \cdot \mathcal{C}_{H}(\R)^{\dagger}.
$$
The pair of characters $(\mu/\sigma, \sigma)$ induce an isomorphism $\Delta_{n} \cong \left((\Z/p^{n}\Z)^{\times}\right)^{2}$. Taking the limit over $n$ gives an isomorphism 
$$
    (\mu/\sigma, \sigma): \Delta_{\infty} \cong \left(\Zp^{\times}\right)^{2}.
$$
Write $\Delta_{\infty}^{0}$ for the preimage of $(1 + p\Zp^{2})^{2}$ under the above isomorphism, so that $(\mu/\sigma, \sigma): \Delta^{0}_{\infty} \cong (1 + p\Zp^{2})^{2} \cong \Zp^{2}$ determines a chart around the identity of the abelian $p$-adic Lie group $\Delta_{\infty}$. Deligne's reciprocity law gives a Galois character 
$$
    \kappa_{n}: \mathrm{Gal}(\overline{\Q}/\Q)^{\mathrm{ab}} \to \Delta_{n}
$$
such that $\mu/\sigma \circ \kappa_{n}$ is the inverse of the mod $p^{n}$ cyclotomic character. This allows us to identify
$$
    \Delta_{n} = \Gamma/\Gamma_{n} \times \Delta^{(\sigma)}_{n},
$$
where $\sigma$ gives an isomorphism $\Delta^{(\sigma)}_{n} \cong (\Z/p^{n}\Z)^{\times}$. In the limit this becomes
$$
    \Delta_{\infty} = \Gamma \times \Delta_{\infty}^{(\sigma)}
$$
with $\Delta^{(\sigma)}_{\infty}$ an abelian $p$-adic Lie group of dimension $1$ with local coordinate $\sigma$.

Suppose now that $\Pi$ admits a $p$-stabilisation (a choice of $U'_{B}$ eigenspace on $\Pi_{p}^{J_{G}}$) defining a really nice point $x_{\Pi} \in \mathcal{E}_{G}$ as in Definition \ref{def:reallynice}. There is thus a wide-open disc $\mathcal{U} \subset \mathcal{W}_{G}$ defined over $L$ and a Coleman family $\underline{\Pi} \subset \mathcal{E}_{G}$ fibred over $\mathcal{U}$ and passing through $x_{\Pi}$.  In order to construct the overconvergent Abel--Jacobi map in this context we need the following result which follows from work of of Yang--Zhu \cite{yang2025generic}:
\begin{theorem}
Suppose $p > 3$ and the representation $\Pi$ has generic $L$-parameter in the sense of \cite[Definition 1.1]{yang2025generic}. Let $\mathfrak{m}_{\underline{\Pi}} = \mathfrak{m}_{S}\cdot\mathfrak{m}_{p}$ be the kernel of the map 
$$
\mathbb{T}_{S, p}^{-}\hat{\otimes}\Lambda_{\mathcal{U}} \to \mathcal{O}(\underline{\Pi})^{\circ}/\mathfrak{m}_{\mathcal{U}}
$$
with $\mathfrak{m}_{S} \in \mathbb{T}_{S} \hat{\otimes} \Lambda_{\mathcal{U}}$ and $\mathfrak{m}_{p} \subset \mathfrak{U}_{p}^{-} \hat{\otimes} \Lambda_{\mathcal{U}}$ and suppose that $\mathfrak{m}_{S}$ does not occur in the mod $p$ cohomology of the boundary of the Borel--Serre compactification of $Y_{G}(J_{G})(\C)$. 
Then the cohomology group $\overline{H}^{4}(\overline{Y}_{G}(J_{G}), \mathscr{A}^{\mathrm{Iw}}_{\mathcal{U}, m})_{\mathfrak{m}_{S}} = 0$. In particular, it satisfies \eqref{ass:bex}.
\end{theorem}
\begin{proof}
    This follows (via Lemma \ref{lem:torvan}) from \cite[Theorem]{yang2025generic} where it's proved that $H^{i}(Y_{G}(K), \overline{\mathbb{F}}_{p})_{\mathfrak{m}_{\ell}} = 0$ (resp. $H^{i}_{c}(Y_{G}(K), \overline{\mathbb{F}}_{p})_{\mathfrak{m}_{\ell}} = 0$) for $i > q$ (resp. $i < q$), a level $K$ unramified at $\ell$, where $\mathfrak{m}_{\ell}$ is the maximal ideal in the spherical Hecke algebra at $\ell$ corresponding to a representation $\Pi$ with generic $L$-parameter, and $p$ larger than the Coxeter number of $\mathrm{Sp}_{4}$, which is $4$ (the function of the primes denoted $p$ and $\ell$ in \textit{op. cit.} is the opposite of the usage in this paper). The assumption that $\mathfrak{m}_{S}$ does not occur in the mod $p$ cohomology of the Borel-Serre boundary implies that 
    $$
        H^{i}(Y_{G}(K), \overline{\mathbb{F}}_{p})_{\mathfrak{m}_{S}} = H^{i}_{c}(Y_{G}(K), \overline{\mathbb{F}}_{p})_{\mathfrak{m}_{S}}
    $$
    so $ H^{i}(Y_{G}(K), \overline{\mathbb{F}}_{p})_{\mathfrak{m}_{S}} = 0$ for $i \neq q$ which is the condition needed to apply Lemma \ref{lem:torvan}.
\end{proof}

We define $W_{\underline{\Pi}}$ to be the $\underline{\Pi}$ eigenspace in $H^{3}(\overline{Y}_{G}(J_{G}), \mathscr{A}^{\mathrm{Iw}}_{\mathcal{U}, m})$ and let $\hat{W}_{\underline{\Pi}}$ denote its reflexive hull. We write  $\alpha_{B}(\underline{\Pi})$ for $U'_{B}$-eigenvalue of $\underline{\Pi}$ and $\lambda_{\underline{\Pi}}$ for its valuation. Shrinking $\mathcal{U}$ if necessary we can apply the Abel-Jacobi map defined in Definition \ref{def:abeljacobi} associated to $\underline{\Pi}$ to obtain classes 
$$
    {_{c_{1}, c_{2}}}z_{n}^{[\underline{\Pi}, q, r]} \in H^{1}(\Q_{n}, \hat{W}_{\underline{\Pi}}(-q)) \otimes_{L} \mathrm{CoInd}_{\{1\}}^{\Delta_{n}^{(\sigma)}}(L) \otimes \sigma^{r}, 
$$
where the twist by $\sigma^{r}$ is for the natural action of $\mathrm{Gal}(\Q_{n}/\Q) \times \Delta^{(\sigma)}_{n}$, and
the specialisation of ${_{c_{1}, c_{2}}}z_{0}^{[\underline{\Pi}, q, r]}$ at $(r_{1}, r_{2}; c) \in \mathcal{U}$ is $\mathcal{E} \cdot  {_{c_{1}, c_{2}}}z_{\Pi}^{[q, r]}$ where $\mathcal{E}$ an explicit Euler factor which arises when comparing classes at Iwahori and spherical level \textit{c.f.} \cite[Theorem 7.1.1]{loeffler2021spherical}. Let $\mathcal{V} \subset \mathcal{U}$ be an affinoid subset. By restriction we obtain classes 
$$
     {_{c_{1}, c_{2}}}z_{n}^{[\Pi\vert_{\mathcal{V}}, q, r]} \in H^{1}(\Q_{n}, \hat{W}_{\Pi\vert_{\mathcal{V}}}(-q)) \otimes_{L} \mathrm{CoInd}_{\{1\}}^{\Delta_{n}^{(\sigma)}}(L)\otimes \sigma^{r}.
$$
 The Galois representation $\hat{W}_{\Pi\vert_{\mathcal{V}}}$  has a natural $\mathcal{O}(\mathcal{V})^{\circ}$-lattice $\hat{T}_{\Pi\vert_{\mathcal{V}}}$ given by the (reflexive hull of the) image of $H^{3}(\overline{Y}_{G}(J_{G})_{\Sigma}, \mathscr{A}^{\mathrm{Iw}}_{\mathcal{V}, m})$ which defines a norm on $\hat{W}_{\Pi\vert_{\mathcal{V}}}$. We deduce from Lemma \ref{lem:analytic} that for any $0 \leq h_{1} + h_{2} \leq  h$ these classes satisfy 
\begin{equation} \label{eq: norm1}
    \norm{\sum_{j_{1} \leq h_{1}, j_{2} \leq h_{2}}^{h}(-1)^{j_{1} + j_{2}}\binom{h_{1}}{j_{1}}\binom{h_{2}}{j_{2}}{_{c_{1}, c_{2}}}z_{n}^{[\Pi\vert_{\mathcal{V}}, j_{1}, j_{2}]}} \leq Cp^{-nh}
\end{equation}
for a constant $C$ independent of $n$. Let $x_{n}(q, r)$ be the restriction of $\alpha_{B}(\underline{\Pi})^{-n}{_{c_{1}, c_{2}}}z_{n}^{[\Pi\vert_{\mathcal{V}}, q, r]}$ to $H^{1}(\Q_{\infty}, \hat{W}_{\Pi\vert_{\mathcal{V}}})^{\Gamma_{n} = \chi_{\mathrm{cyc}}^{-q}} \otimes_{L} \mathrm{CoInd}_{\{1\}}^{\Delta_{\infty}^{(\sigma)}}(L)^{\Delta^{(\sigma)}_{\infty} = \sigma^{r}}$. We can now apply Theorem \ref{thm:interpo} to interpolate in $q$ and $r$:

\begin{theorem}
There is a unique element 
$$
    {_{c_{1}, c_{2}}}\tilde{z}_{\infty}^{[\Pi_{\mathcal{V}}]} \in H^{1}(\Q_{\infty}, \mathscr{D}_{\lambda}(\Gamma \times \Delta^{(\sigma)}_{\infty}, \hat{W}_{\Pi\vert_{\mathcal{V}}}))^{\Gamma}
$$
satisfying 
$$
    \int_{(\gamma, a) \in \Gamma_{n} \times p^{n}\Delta^{(\sigma)}_{n}}\chi_{\mathrm{cyc}}^{q}(\gamma)\sigma^{-r}(a){_{c_{1}, c_{2}}}\tilde{z}_{\infty}^{[\Pi\vert_{\mathcal{V}}]}  = x_{n}(q, r)(1) \in H^{1}(\Q_{\infty}, \hat{W}_{\Pi\vert_{\mathcal{V}}})^{\Gamma_{n} = \chi_{\mathrm{cyc}}^{-q}}.
$$
\end{theorem}
\begin{lemma} \label{lem:inv0}
We have
$\hat{W}_{\Pi\vert_{\mathcal{V}}}^{G_{\Q_{\infty}}} = \{0\}$.
\end{lemma}
\begin{proof}
We know from \cite[Theorem II]{weissauer} that $H^{0}(\Q^{\mathrm{ab}}, W_{\Pi_{x}}) = 0$ for classical specialisations $x \in \mathcal{V}$ which correspond to non-CAP cuspidal automorphic $G(\A)$-representations of cohomological weight. It suffices to show that such specialisations are dense in $\mathcal{V}$; since $\hat{W}_{\Pi\vert_{\mathcal{V}}}$ is locally free, the intersection of the kernels of specialisation at a dense subset of $\mathcal{V}$ is zero. Our assumptions on $\mathcal{V}$ imply that $\mathcal{V}$ is isomorphic to its image under the weight map, which implies that classical points are dense by the classicality theorem since its easy to show that it contains arbitrarily large weights. Since classical weights $(r_{1}, r_{2}; c)$ are cohomological for sufficiently large values of $r_{1}, r_{2}$ we deduce that cohomological classical weights are dense. Finally, our assumptions imply that $H^{i}(\overline{Y}_{G}(J_{G}), V_{w(x)}^{G})_{\mathfrak{m}_{x}} = 0$ for $i \neq 3$ for all classical $x \in \mathcal{V}$ and thus since only non-CAP representations contribute solely to the middle degree then non-CAP cohomological classical points are dense in $\mathcal{V}$.
\end{proof} 
\begin{corollary}
There is a class 
$$
    {_{c_{1}, c_{2}}}z_{\infty}^{[\Pi\vert_{\mathcal{V}}]} \in H^{1}(\Q, \mathscr{D}_{\lambda}(\Gamma \times \Delta^{(\sigma)}_{\infty}, \hat{W}_{\Pi\vert_{\mathcal{V}}}))
$$
satisfying 
$$
     \int_{(\gamma, a) \in \Gamma_{n} \times p^{n}\Delta^{(\sigma)}_{n}}\chi_{\mathrm{cyc}}^{q}(\gamma)\sigma^{-r}(a){_{c_{1}, c_{2}}}z_{\infty}^{[\Pi_{\mathcal{V}}]}  = x_{n}(q, r)(1) \in H^{1}(\Q_{n}, \hat{W}_{\Pi\vert_{\mathcal{V}}}(-q)).
$$
\end{corollary}
\begin{proof}
Let $M = \mathscr{D}_{\lambda}(\Gamma \times \Delta^{(\sigma)}_{\infty}, \hat{W}_{\Pi\vert_{\mathcal{V}}})$. By \cite[Proposition 2.1.6]{recoleman} for each $n \geq 0$ there are exact sequences in continuous Galois cohomology
$$
    0 \to H^{1}(\Gamma_{n}, M^{G_{\Q_{n}}}) \to H^{1}(\Q, M) \to H^{1}(\Q_{n}, M)^{\Gamma_{n}} \to H^{2}(\Gamma_{n}, M^{G_{\Q_{n}}}).
$$
$$
    0 \to H^{1}(\Gamma, M^{G_{\Q_{\infty}}}) \to H^{1}(\Q, M) \to H^{1}(\Q_{\infty}, M)^{\Gamma} \to H^{2}(\Gamma, M^{G_{\Q_{\infty}}}).
$$
Then Lemma \ref{lem:inv0} gives isomorphisms
\begin{align*}
    H^{1}(\Q, M(-q)) \cong H^{1}(\Q_{n}, M)^{\Gamma_{n} = \chi_{\mathrm{cyc}}^{-q}} \\
    H^{1}(\Q, M) \cong H^{1}(\Q_{\infty}, M)^{\Gamma}
\end{align*}
allowing one to uniquely lift $ {_{c_{1}, c_{2}}} \tilde{z}^{[\Pi\vert_{\mathcal{V}}]}_{\infty}$ to an element of $H^{1}(\Q, M)$ satisfying the desired interpolation property. 
\end{proof}
\begin{remark}
In \cite{loeffler2020bloch} Loeffler--Zerbes prove an `explicit reciprocity law' relating the Lemma--Flach classes constructed in \cite{LZGsp} to the value of a two variable $p$-adic $L$-function interpolating a product of $L$-values for $\Pi$ as above. The above result is consonant with this, showing that the class ${_{c_{1}, c_{2}}}z_{\infty}^{[\Pi\vert_{\mathcal{V}}]}$ is in some sense a `two variable cohomology class'. We expect that the image of this class under a suitable regulator map will recover a three variable $p$-adic $L$-function living in $\mathcal{O}(\mathcal{V}) \hat{\otimes} \mathscr{D}_{\lambda}(\Gamma \times \Delta_{\infty}^{(\sigma)}, L)$ interpolating the values of the two variable $p$-adic $L$-function occurring in the work of Loeffler--Zerbes as $\Pi$ varies in a Coleman family. This will be explored in future work. 
\end{remark}
\begin{appendices}

\section{Multivariable $p$-adic Fourier theory and distribution valued Galois cohomology} \label{app:A}

We record some results concerning the theory of tempered distributions on abelian $p$-adic Lie groups. We follow quite closely the case of $\Zp$ as described in \cite{colmez}. 

Let $L/\Qp$ be a complete extension and let $\Delta$ be an abelian $p$-adic Lie group of dimension $d$. Then $\Delta$ has an open-compact subgroup $\Delta^{0}$ which is isomorphic to $\Zp^{d}$. We fix a coordinate $\underline{x} = (x_{1}, \ldots, x_{d}): \Delta^{0} \cong \Zp^{d}$ and for a function $f: \Delta \to L$ we write $f_{\underline{x}} = f\vert_{\Delta^{0}} \circ \underline{x}^{-1}: \Zp^{d} \to L$. 

\begin{definition}
Let $r \in \R_{\geq 0}$. We say that a function $f: \Delta \to L$ is of \textit{class $r$} if there are functions $f^{\underline{j}}: \Delta \to L$ for each $\underline{j} = (j_{1}, \ldots, j_{d})$ with $\sum_{i = 0}^{d} j_{i} = r$ such that if we define $\mathcal{E}_{f}: \Delta \times \Delta^{0} \to L$ by
$$
    \mathcal{E}_{f, r}(\delta_{1}, \delta_{2}) = f(\delta_{1} + \delta_{2}) - \sum_{0 \leq j_{1} + \ldots + j_{d} \leq \lfloor r \rfloor}f^{(\underline{j})}(\delta_{1})\frac{x_{1}(\delta_{2})^{j_{1}}\cdots x_{d}(\delta_{2})^{j_{d}}}{j_{1}!\cdots j_{d}!}
$$
then the quantity 
$$
\mathrm{inf}_{\delta_{1} \in \Delta, \delta_{2} \in p^{n}\Delta^{0}}\left( v_{p}\left(\mathcal{E}_{f, r}(\delta_{1}, \delta_{2})\right)\right) - rn
$$ tends to $\infty$ as $n \to \infty$. We write $\mathscr{C}^{r}(\Delta, L)$ for the space of class $r$ functions. 
\end{definition}
While the functions $f^{(\underline{j})}$ will depend on the choice of coordinate, the property of being class $r$ does not. The space $\mathscr{C}^{r}(\Delta, L)$ has a natural valuation 
$$
    v_{\mathscr{C}_{r}}\left(f\right) = \mathrm{inf}\{\mathrm{inf}_{\delta \in \Delta}(v_{p}(f(\delta)), \mathrm{inf}_{\delta_{1} \in \Delta, \delta_{2} \in p^{n}\Delta^{0}}\left( v_{p}\left(\mathcal{E}_{f, r}(\delta_{1}, \delta_{2})\right)\right) - rn\}.
$$
If for $f \in \mathscr{C}^{r}(\Delta, L)$ we define a sequence of functions 
$$
    f_{m}(\delta) =  \sum_{(j_{1}, \ldots, j_{d}) = 0}^{p^{m} - 1}f(j_{1}, \ldots, j_{d})\mathbbm{1}_{(j_{1}, \ldots, j_{d}) + p^{m}\Delta^{0}}\left(\sum_{k = 0}^{\lfloor r \rfloor }f_{\underline{x}}^{(\underline{j})}(\underline{j})\frac{(x_{1}(\delta) - j_{1})^{k_{1}}\ldots(x_{d}(\delta) - j_{d})^{k_{2}}}{j_{1}!\ldots j_{d}!}\right)
$$
then $f_{m} \to f$ in the topology given by the valuation $v_{\mathscr{C}^{r}}$ and we conclude that the space $LP^{[0, r]}(\Delta, L)$ of locally polynomial functions on $\Delta$ of degree $\leq r$ is dense in $\mathscr{C}^{r}(\Delta, L)$. 

\begin{definition}
We define the space of \textit{$r$-tempered distributions} $\mathscr{D}_{r}(\Delta, L)$ to be the continuous dual of $\mathscr{C}^{r}(\Delta, L)$. 
\end{definition}

For any $h \geq 0$ we write $\mathscr{D}^{[0, h]}(\Delta, L)$ for the dual of the space $\mathrm{LP}^{[0, h]}(\Delta, L)$ of locally polynomial functions on $\Delta$ of degree $\leq h$.
\begin{proposition} \label{prop:riemannsum}
For $h \geq \lfloor r \rfloor$ the natural restriction map 
$$
    \iota: \mathscr{D}_{r}(\Delta, L) \to \mathscr{D}^{[0, h]}(\Delta, L)
$$
is an isomorphism onto its image. A locally polynomial distribution $\mu \in \mathscr{D}^{[0,h]}(\Delta, L)$ is in the image of $\iota$ if and only if for all $n \in \Z_{\geq 0}, \delta \in \Delta$ and  $\underline{k} = (k_{1}, \ldots, k_{d}) \in \Z_{\geq 0}^{d}$  satisfying $k_{1} + \ldots +k_{d} \leq h$ there is a constant $C$ such that 
$$
    v_{p}\left(\int_{\delta + p^{n}\Delta^{0}}\left(\frac{x_{1} - x_{1}(\delta)}{p^{n}}\right)^{k_{1}}\ldots \left(\frac{x_{d} - x_{d}(\delta)}{p^{n}}\right)^{k_{d}}\mu\right) \geq C - rn.
$$
\end{proposition}
\begin{proof}
The proof is very similar to the proof of \cite[Théorème]{colmez} in the $\Delta = \Zp$ case so we just give a sketch and we assume $\Delta = \Zp^{d}$ to simplify notation. 

Consider the space $\mathscr{D}_{r}^{[0, h]}(\Delta, L)$ of $\mu \in \mathscr{D}^{[0, h]}(\Delta, L)$ for whom the quantity 
$$
    v_{\mathscr{D}_{r}, h}(\mu) := \mathrm{inf}_{\substack{\delta \in \Delta \\ 0 \leq k_{1} + \ldots + k_{d} \leq h \\ n \geq 0}}\left(v_{p}\left(\int_{\delta + p^{n}\Delta^{0}}\left(\frac{x_{1} - \delta}{p^{n}}\right)^{k_{1}}\ldots \left(\frac{x_{d} - \delta}{p^{n}}\right)^{k_{d}}\mu\right) + rn\right)
$$
is finite. We claim that $\iota$ gives an isomorphism $\mathscr{D}_{r}(\Delta, L)\cong \mathscr{D}^{[0, h]}_{r}(\Delta, L)$. We skip the proof that $\iota$ sends $\mathscr{D}_{r}(\Delta, L)$ into $\mathscr{D}_{r}^{[0, h]}(\Delta, L)$. Injectivity of $\iota$ follows from the fact that $\mathrm{LP}^{[0, h]}(\Delta, L)$ is dense in $\mathscr{C}^{r}(\Delta, L)$. It is left to prove surjectivity. We have for $\underline{i} \in \Z^{d}_{\geq 0}$ and $\underline{k} \in \Z^{d}_{\geq 0}$ satisfying $k_{1} + \ldots + k_{d} \leq \lfloor r \rfloor$ a Banach basis
$$
    e_{\underline{i}, \underline{k}, r} = p^{\lfloor(\ell(i_{1}) + \ldots + \ell(i_{d})) r\rfloor} \mathbbm{1}_{i_{1} + p^{\ell(i_{1})}\Delta \times \cdots \times i_{d} + p^{\ell(i_{d})}\Delta}\left(\frac{x_{1} - i_{1}}{p^{\ell(i_{1})}}\right)^{k_{1}}\cdots \left(\frac{x_{d} - i_{d}}{p^{\ell(i_{1})}}\right)^{k_{d}}
$$
of $\mathscr{C}^{r}(\Delta, L)$, where $\ell(i) = \mathrm{inf}\{n: i < p^{n}\}$. Let $\mu \in \mathscr{D}^{[0, h]}_{r}(\Delta, L)$ and define $\tilde{\mu} \in \mathscr{D}_{r}(\Delta, L)$ by 
$$
    \int_{\Delta}e_{\underline{i}, \underline{k}, r}\tilde{\mu} = \int_{\Delta}e_{\underline{i}, \underline{k}, r}\mu. 
$$
Writing $\lambda = \iota(\tilde{\mu}) - \mu$ we can write $\int_{\delta + p^{m}\Delta}x_{1}^{k_{1}}\cdots x_{d}^{k_{d}}\lambda$ as 
$$
    \sum_{(i_{1}, \ldots, i_{d}) = 0}^{p^{m} - 1}\sum_{(j_{1}, \ldots, j_{d}) = 0}^{(k_{1}, \ldots, k_{d})}\binom{k_{1}}{j_{1}}\cdots \binom{k_{d}}{j_{d}}(\delta_{1} + i_{1}p^{n})^{k_{1} - j_{1}}\ldots(\delta_{d} + i_{d}p^{n})^{k_{d} - j_{d}}\int_{\delta + \underline{i}p^{n} + p^{n + m}\Delta}(x_{1} - \delta_{1})^{j_{1}}\cdots(x_{d} - \delta_{d})^{j_{d}}\lambda
$$
for $m \geq 0$, where $\delta = (\delta_{1}, \ldots, \delta_{2}) \in \Delta = \Zp^{d}$. The integrals with $j_{1} + \ldots + j_{d} \leq r$ vanish and for $j_{1} + \cdots + j_{d} > r$ the valuation of the expression is greater than or equal to $\mathrm{inf}_{r < j_{1} + \cdots + j_{d} \leq h}\left((j_{1} + \ldots + j_{d} - r)(n + m)\right) + v_{\mathscr{D}_{r}, h}(\lambda)$ which tends to $\infty$ as $m \to \infty$ so $\lambda = 0$ and we conclude.
\end{proof}

Now let $A$ be a Noetherian $\Qp$-Banach algebra and let $M$ be a finite free $A$-module. Let $\Delta$ be a $d$-dimensional $p$-adic Lie group. 
\begin{proposition}
Let $\lambda \in \R_{\geq 0}$, then $H^{1}(K_{\infty}, \mathscr{D}_{\lambda}(\Delta, M))$ injects into $H^{1}(K, \mathscr{D}^{[0, h]}(\Delta, M))$ for any $h \geq \lfloor \lambda \rfloor$. An element $\mu \in H^{1}(K_{\infty}, \mathscr{D}^{[0, h]}(\Delta, M))$ is in this image if and only if the sequence 
$$
    p^{-\lfloor \lambda n \rfloor}\mathrm{sup}_{\delta \in \Delta}\norm{\int_{p^{n}\Delta^{0}}\left( \frac{(x_{1}(x) - x_{1}(\delta))^{k_{1}} \cdots (x_{1}(x) - x_{1}(\delta))^{k_{d}}}{p^{n}}\right)\mu(x)} 
$$
is bounded as $n \to \infty$ for any $0 \leq k_{1} + \cdots + k_{d} \leq h$.
\end{proposition}
\begin{proof}
The proof is essentially identical to \cite[Proposition 2.3.2]{recoleman} which is itself adapted from various results in \cite{colmez1998theorie} in the case $\Delta = \Zp^{\times}$. The more general choice of  $\Delta$ means we must use Proposition \ref{prop:riemannsum} in place of \cite[Proposition II.3.2]{colmez} but aside from that the proofs are the same. 
\end{proof}
\end{appendices}
\bibliography{SVIII_revision}{}

\providecommand{\bysame}{\leavevmode\hbox to3em{\hrulefill}\thinspace}
\providecommand{\MR}{\relax\ifhmode\unskip\space\fi MR }
\providecommand{\MRhref}[2]{%
  \href{http://www.ams.org/mathscinet-getitem?mr=#1}{#2}
}
\providecommand{\href}[2]{#2}
\begin{thebibliography}{BLV18}

\bibitem[AS08]{Ashsteve}
Avner Ash and Glenn Stevens, \emph{p-adic deformations of arithmetic cohomology}, preprint (2008).

\bibitem[BDJ21]{barrera2021p}
Daniel Barrera, Mladen Dimitrov, and Andrei Jorza, \emph{$ p $-adic {$ L $}-functions of {H}ilbert cusp forms and the trivial zero conjecture}, Journal of the European Mathematical Society (2021).

\bibitem[Bei86]{beilinson1986higher}
AA~Beilinson, \emph{Higher regulators of modular curves}, Contemporary Mathematics (1986), 1--34.

\bibitem[Bel]{bellaicheeigenbook}
Jo{\"e}l Bella{\"\i}che, \emph{The eigenbook}, Pathways in Mathematics. Birkhauser-Springer. To appear.

\bibitem[BH97]{balister1997note}
Paul~N Balister and Susan Howson, \emph{Note on {N}akayama’s lemma for compact $\lambda$-modules}, Asian J. Math \textbf{1} (1997), no.~2, 224--229.

\bibitem[BLV18]{buyukboduk2018iwasawa}
K{\^a}z{\i}m B{\"u}y{\"u}kboduk, Antonio Lei, and Guhan Venkat, \emph{Iwasawa theory for symmetric square of non-$ p $-ordinary eigenforms}, arXiv preprint arXiv:1807.11517 (2018).

\bibitem[BP21]{boxer2021higher}
George Boxer and Vincent Pilloni, \emph{Higher {C}oleman theory}, arXiv preprint arXiv:2110.10251 (2021).

\bibitem[Col98]{colmez1998theorie}
Pierre Colmez, \emph{Th{\'e}orie d'iwasawa des repr{\'e}sentations de de rham d'un corps local}, Annals of Mathematics (1998), 485--571.

\bibitem[Col10]{colmez}
\bysame, \emph{Fonctions d’une variable p-adique}, Ast{\'e}risque \textbf{330} (2010), 13--59.

\bibitem[CS19]{carianischolze}
Ana Caraiani and Peter Scholze, \emph{On the generic part of the cohomology of non-compact unitary shimura varieties}, arXiv preprint arXiv:1909.01898 (2019).

\bibitem[CT23]{caraiani2023etale}
Ana Caraiani and Matteo Tamiozzo, \emph{On the {\'e}tale cohomology of hilbert modular varieties with torsion coefficients}, Compositio Mathematica \textbf{159} (2023), no.~11, 2279--2325.

\bibitem[GS20]{greenberg2020triple}
Matthew Greenberg and Marco~Adamo Seveso, \emph{Triple product p-adic {L}-functions for balanced weights}, Mathematische Annalen \textbf{376} (2020), no.~1, 103--176.

\bibitem[Han17]{hansen2017universal}
David Hansen, \emph{Universal eigenvarieties, trianguline {G}alois representations, and p-adic {L}anglands functoriality (with an appendix by {J}ames {N}ewton)}, Journal f{\"u}r die reine und angewandte Mathematik \textbf{2017} (2017), no.~730, 1--64.

\bibitem[HL23]{hamann2023torsion}
Linus Hamann and Si~Ying Lee, \emph{Torsion vanishing for some shimura varieties}, arXiv preprint arXiv:2309.08705 (2023).

\bibitem[Kat04]{kato}
Kazuya Kato, \emph{p-adic hodge theory and values of zeta functions of modular forms}, Ast{\'e}risque \textbf{295} (2004), 117--290.

\bibitem[Kin15]{kings2015eisenstein}
Guido Kings, \emph{Eisenstein classes, elliptic soul{\'e} elements and the l-adic elliptic polylogarithm, the bloch kato conjecture for the riemann zeta function (john coates, anantharam raghuram, anupam saikia, and ramdorai sujatha, eds.)}, London Math. Soc. Lecture Note Ser \textbf{418} (2015).

\bibitem[KLZ17]{KLZ}
Guido Kings, David Loeffler, and Sarah~Livia Zerbes, \emph{Rankin--eisenstein classes and explicit reciprocity laws}, Cambridge Journal of Mathematics \textbf{5} (2017), no.~1, 1--122.

\bibitem[LLZ14]{lei2014euler}
Antonio Lei, David Loeffler, and Sarah~Livia Zerbes, \emph{Euler systems for rankin--selberg convolutions of modular forms}, Annals of mathematics (2014), 653--771.

\bibitem[Loe21]{loefflerspherical}
David Loeffler, \emph{Spherical varieties and norm relations in {I}wasawa theory}, Journal de Th{\'e}orie des Nombres de Bordeaux \textbf{33} (2021), no.~3, 1021--1043.

\bibitem[Lov17]{lovering2017integral}
Tom Lovering, \emph{Integral canonical models for automorphic vector bundles of abelian type}, Algebra \& Number Theory \textbf{11} (2017), no.~8, 1837--1890.

\bibitem[LRZ21]{loeffler2021spherical}
David Loeffler, Rob Rockwood, and Sarah~Livia Zerbes, \emph{Spherical varieties and p-adic families of cohomology classes}, arXiv preprint arXiv:2106.16082 (2021).

\bibitem[LSZ21]{LZGsp}
David Loeffler, Christopher Skinner, and Sarah~Livia Zerbes, \emph{Euler systems for $\mathrm{GSp (4)}$}, Journal of the European Mathematical Society (2021).

\bibitem[LZ16]{recoleman}
David Loeffler and Sarah~Livia Zerbes, \emph{Rankin-{E}isenstein classes in {C}oleman families}, Research in the Mathematical Sciences \textbf{3} (2016), no.~1, 1--53.

\bibitem[LZ20]{loeffler2020bloch}
\bysame, \emph{On the {B}loch-{K}ato conjecture for $\mathrm{GSp}(4)$}, arXiv preprint arXiv:2003.05960 (2020).

\bibitem[RS07]{localnewf}
Brooks Roberts and Ralf Schmidt, \emph{Local newforms for \textit{GSp}(4)}, vol. 1918, Springer Science \& Business Media, 2007.

\bibitem[SW21]{salazar2021parabolic}
Daniel~Barrera Salazar and Chris Williams, \emph{Parabolic eigenvarieties via overconvergent cohomology}, Mathematische Zeitschrift (2021), 1--35.

\bibitem[Tay89]{taylor1989galois}
Richard Taylor, \emph{On galois representations associated to hilbert modular forms}, Inventiones mathematicae \textbf{98} (1989), no.~2, 265--280.

\bibitem[Urb11]{urbaneigen}
Eric Urban, \emph{Eigenvarieties for reductive groups}, Annals of mathematics (2011), 1685--1784.

\bibitem[Wei05]{weissauer}
Rainer Weissauer, \emph{Four dimensional {G}alois representations}, Ast{\'e}risque \textbf{302} (2005), 67--150.

\bibitem[YZ25]{yang2025generic}
Xiangqian Yang and Xinwen Zhu, \emph{On the generic part of the cohomology of shimura varieties of abelian type}, arXiv preprint arXiv:2505.04329 (2025).

\end{thebibliography}
\bibliographystyle{amsalpha}
\end{document}